\newcommand{\R}{{\mathbb R}}
\newcommand{\dist}{{\text{\rm dist}}}
\newcommand{\e}{\epsilon}
\newcommand{\cA}{{\mathcal A}}
\newcommand{\cM}{{\mathcal M}}
\newcommand{\cS}{{\mathcal S}}
\newcommand{\osc }{\text{\rm osc}}
\newtheorem{theorem}[subsubsection]{Theorem}
\newtheorem{lemma}[subsubsection]{Lemma}
\newtheorem{cor}[subsubsection]{Corollary}
\newtheorem{remark}[subsubsection]{Remark}
\newtheorem{prop}[subsubsection]{Proposition}
\newtheorem{example}[subsubsection]{Example}
\newtheorem{definition}[subsubsection]{Definition}
\numberwithin{equation}{section}
\DeclareMathOperator{\tr}{tr}
\title[Homogenization with Oscillating Boundary Data]{Homogenization of Fully Nonlinear Elliptic Equations with Oscillating Dirichlet Boundary Data}
\begin{document}

\author{Ki-ahm Lee}
\address{Seoul National University, Seoul 151-747, Korea}
\email{kiahm@snu.ac.kr}
\author{Minha Yoo}
\address{Seoul National University, Seoul 151-747, Korea}
\email{minha00@snu.ac.kr}

\maketitle
\begin{abstract}
This paper deals with the homogenization of fully nonlinear second order equation with an oscillating Dirichlet boundary data when the operator and boundary data are $\e$-periodic. We will show that the solution $u_\e$ converges to some function $\overline u(x)$ uniformly on every compact subset $K$ of the domain $D$. Moreover, $\overline u$ is a solution to some boundary value problem. For this result, we assume that the boundary of the domain has no (rational) flat spots and the ratio of elliptic constants $\Lambda / \lambda$ is sufficiently large. 
\end{abstract}

%%%%%%%%%%%%%%%%%%%%%%%%%%%%%%%%%%%%%%%%%%%%%%%%%%%%%
\section{Introduction}\label{sec-intro}
%%%%%%%%%%%%%%%%%%%%%%%%%%%%%%%%%%%%%%%%%%%%%%%%%%%%%
In this paper, we are going to consider the homogenization problem with oscillating Dirichlet boundary data. Let $D$ be a bounded domain in $\R^n$ and let $u_\e$ be the viscosity solution of the following equation,
\begin{equation} \label{eq-main} \tag{$P_{\e}$}
\begin{cases}
F\left(D^2u_\e(x),\frac{x}{\e}  \right)= f\left( x,\frac{x}{\e} \right) &\text{ in } D \\
u_\e(x)=g\left(x, \frac{x}{\e} \right)&\text{ on } \partial  D.
\end{cases}
\end{equation}
Here $f(x,y)$ is a continuous and uniformly bounded function, $g$ is a $\mathcal C^2$-function in $(x,y)$ which is $\mathcal C^{2,\alpha}$ for fixed $x\in D$ satisfying 
\begin{equation} \label{con-g}
\sup_{x \in \partial D} \| g(x,\cdot) \|_{\mathcal{C}^{2,\alpha}(\R^n)} < \infty,
\end{equation}
and $F$ is a continuous function satisfying the following two conditions:
\begin{enumerate}
\item (Uniformly Ellipticity) 
For given any symmetric matrix $M$ and positive symmetric matrix $N$, there are constants $0<\lambda\le\Lambda < +\infty$ satisfying
\begin{equation} \label{con-F-unif}
\lambda ||N|| \leq F(M+N,y)-F(M,y) \leq \Lambda ||N|| \qquad\\
\end{equation}
where $\| \cdot \|$ is a matrix norm defined by $\|M\| = \sum_{1\le i,j\le n} M_{ij}^2$.
\item (Positive Homogenity)
For given any $t>0$ and any symmetric matrix $M$, 
\begin{equation} \label{con-phomo}
F(t M,y)  = t F(M,y).
\end{equation} 
\end{enumerate}

We assume that $F(M,y)$, $f(x,y)$ and $g(x,y)$ are periodic in the $y$-variable, that is, $F(M,y+k) = F(M,y)$, $f(x,y+k)=f(x,y)$, and $g(x,y+k)=g(x,y)$ for all $M \in \mathcal{S}$, $x \in \overline D$, $y\in \R^n$ and $k \in \mathbb{Z}^n$ where $\mathcal{S}$ denotes the set of all $n\times n$ symmetric matrices.

According to \cite{E}, if there exists a limit $\overline u$ of $u_\e$, then there are homogenized functions $\overline F$ and $\overline f$ which are independent on the $y$ variable and the limit $\overline u$ satisfies
\begin{equation} \label{eq-lim}
\overline F(D^2 \overline u) = \overline f(x) \text{ in } D
\end{equation}
in the viscosity sense. Moreover, if the boundary data $g(x,x/\e)$ does not depend on the $y$ variable, then the solution $u_\e$ converges to the solution $\overline u$ of the equation \eqref{eq-lim} equipped the same boundary data $g(x)$ uniformly on the $x$ variable. However, because of the oscillation in the boundary data, the uniform convergence on $\overline D$ cannot be expected in our case and hence much more delicate analysis is needed. 

We say that a vector $\nu \in S^{n-1} =\{ \nu \in \R^n : |\nu|=1 \}$ is rational if $t \nu \in \mathbb Z^n$ for some $t \in \R^n$ and that a vector $\nu$ is irrational if it is not rational. Additionally, we call a point $x \in \partial D$ a rational point if its outward unit normal vector $\nu(x)$ is rational. We define a irrational point in the similar way. Finally, we say a domain $D$ satisfies the Irrational Direction Dense Condition, IDDC, if all but countably many points on $\partial D$ are irrational. The simplest domains satisfying the IDDC is a ball $B_r(0)$, $r>0$. The formal definition for IDDC can be found in the Section \ref{sec-cor}.

To exist the  limit of $u_\e$ uniquely, IDDC is necessary. We will give an example in Section \ref{sec-thm1} that fails the uniqueness of the limit if the domain $D$ does not hold the IDDC. Further information on IDDC can be found in \cite{LS} and {\color{blue}\cite{GM}.}

We are going to define the effective boundary data $\overline g(x)$ on $\partial D$ in Section \ref{sec-hyper} and Section \ref{sec-cor}. Unfortunately, $\overline g$ is defined only when $x \in \partial D$ is a irrational point. As long as we know, there are no concepts of the viscosity solutions with discontinuous boundary data. So, we need to define the following,
\begin{definition} \label{def-gsol}
Let $g$ be a function defined on $\partial D$ except countably many points, $g^\pm$ be continuous functions defined on $\partial D$ and $u^\pm$ be viscosity solutions of
\begin{equation} \label{eq-def-gsol} \begin{cases}
F(D^2 v(x))=f(x) &\text{ in } D \\
v(x) = g(x) &\text{ on } \partial D\\
\end{cases} \end{equation}
when the  boundary condition $g$ is replaced by $g^\pm$ respectively where $g^\pm$ are continuous functions defined on $\partial D$. 
We say $v$ is a (viscosity) solution of the equation \eqref{eq-def-gsol} in the general sense if $v$ satisfies $u^-(x) \le v(x) \le u^+(x)$ in $D$ for any $g^\pm(x)$ satisfying $g^-(x) \le g(x) \le g^+(x)$ on $\{x \in \partial D : g(x) \text{ is defined}\}$.
\end{definition}

\begin{theorem}\label{thm-main-2} 
Suppose that $u_\e$ is a solution of the equation \eqref{eq-main}. Additionally, suppose that 
\begin{enumerate}
\item
the domain $D$ satisfies the IDDC,
\item 
the equation \eqref{eq-main} is  in the stable class for the finite values of the boundary. 
\end{enumerate}
Then $\overline g_*(x)=\overline g^*(x)$ for all the irrational points and there is a function $\overline u$ such that $u_\e$, the solution of \eqref{eq-main}, converges to $\overline u$ uniformly on every compact set $K \subset D$ where $\overline g^*$ and $\overline g_*$ are same in the Definition \ref{def-cor-overg}.

Moreover, $\overline u$ is a unique solution of the following equation  
\begin{equation} \label{eq-limit-2}
\begin{cases}
\overline{F}(D^2 \overline u(x))=\overline f(x) &\text{ in } D \\
\overline u(x) = \overline g(x) &\text{ on } \partial D\\
\end{cases}
\end{equation}
in the general sense where $\overline g = \overline g^*$.
\end{theorem}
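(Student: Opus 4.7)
The strategy is the half-relaxed limits method combined with the boundary-layer analysis of Sections \ref{sec-hyper} and \ref{sec-cor}. Define
\[
\ol u^*(x) := \limsup_{\e\to 0,\, y\to x} u_\e(y), \qquad \ol u_*(x) := \liminf_{\e\to 0,\, y\to x} u_\e(y),
\]
both finite on $\ol D$ by the uniform $L^\infty$-bound on $u_\e$ (maximum principle plus the hypotheses on $f$ and $g$). Evans' perturbed test function method \cite{E}, applied with the cell problem for $F(\cdot, y)$, shows that $\ol u^*$ is a viscosity subsolution and $\ol u_*$ a viscosity supersolution of $\ol F(D^2 v) = \ol f(x)$ in $D$. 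This step is purely interior and uses only the periodicity and uniform ellipticity of $F$; the oscillating boundary data plays no role here.

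The boundary information enters at each irrational point $x_0 \in \pa D$. By Definition \ref{def-cor-overg} combined with the stable-class hypothesis of the theorem, I would verify $\ol g^*(x_0) = \ol g_*(x_0) =: \ol g(x_0)$, which is the first claim of the theorem. The hyperplane boundary-layer solutions at $x_0$ then serve as local sub/super barriers for $u_\e$ in a neighborhood of $x_0$, and passing to the half-relaxed limits yields
\[
\ol u^*(x_0) \le \ol g^*(x_0) = \ol g(x_0) = \ol g_*(x_0) \le \ol u_*(x_0),
\]
so $\ol u^* = \ol u_* = \ol g$ at every irrational boundary point. By the IDDC these points are dense in $\pa D$ with countable complement.

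To propagate this boundary identity into $D$, fix $\delta > 0$ and construct continuous envelopes $g^\pm_\delta$ on $\pa D$ with $g^-_\delta \le \ol g \le g^+_\delta$ at every irrational point and $\|g^+_\delta - g^-_\delta\|_{L^\infty(\pa D)} \le \delta$; such envelopes are available because the stable-class hypothesis yields a (modulus-of-continuity type) quantitative control of the boundary-layer limits across nearby irrational normals, while the countably many rational points carry no topological weight. Let $v^\pm_\delta \in C(\ol D)$ be the unique viscosity solutions of \eqref{eq-limit-2} with boundary data $g^\pm_\delta$; the comparison principle for $\ol F$ gives $\|v^+_\delta - v^-_\delta\|_{L^\infty(D)} \to 0$ as $\delta \to 0$, so the $v^\pm_\delta$ converge to a common continuous limit.

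The main obstacle is securing the sandwich $v^-_\delta \le \ol u_* \le \ol u^* \le v^+_\delta$ in $D$, because the boundary inequality $\ol u^* \le v^+_\delta$ is known only on the dense set of irrational points and could a priori fail at the countable set of rational points where $\ol g$ is undefined. I would treat this as a removable-singularity problem for viscosity sub/supersolutions: enumerate the rational boundary points $\{x_k\}$, perturb $v^+_\delta$ by $\eta \sum_k 2^{-k}\Phi_k$ where each $\Phi_k$ is a positive bounded local supersolution of $\ol F$ singular at $x_k$, apply the standard comparison principle on the relatively open complement $D \cup (\pa D \setminus \{x_k\})$, and let $\eta\to 0$. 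Once this comparison is in hand, sending $\delta \to 0$ identifies $\ol u := \ol u^* = \ol u_*$ as a solution of \eqref{eq-limit-2} in the general sense of Definition \ref{def-gsol}, so $u_\e \to \ol u$ locally uniformly in $D$. Uniqueness in the general sense is automatic: any $w$ satisfying Definition \ref{def-gsol} is squeezed between the $v^\pm_\delta$ for every $\delta$, and so coincides with $\ol u$.
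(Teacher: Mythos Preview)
Your half-relaxed limit strategy breaks down at the boundary step. You assert that at an irrational point $x_0\in\pa D$ the corrector barriers yield $\ol u^*(x_0)\le \ol g(x_0)\le \ol u_*(x_0)$, but this is false: by definition $\ol u^*(x_0)=\limsup_{\e\to 0,\,y\to x_0}u_\e(y)$, and taking $y=x_0$ on the boundary gives $u_\e(x_0)=g(x_0,x_0/\e)$, hence $\ol u^*(x_0)\ge\sup_{z}g(x_0,z)>\ol g(x_0)$ in general. The corrector $w_\e$ converges to $\ol g(x_0)$ only at normal distance $t\to\infty$ in the fast variable, i.e.\ at physical distance $\gg\e$ from $\pa D$; near $x_0$ (physical distance $O(\e)$ or less) it still carries the full oscillation of $g$. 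The half-relaxed limsup sees \emph{all} approach sequences, including those with $\dist(y,\pa D)/\e\to 0$, so no boundary-layer barrier can pin $\ol u^*(x_0)$ to $\ol g(x_0)$. This is precisely why the paper never attempts comparison on $\pa D$ itself: Theorem~\ref{thm-overg-main} compares $u_\e$ with $u^+_\e$ (same oscillating operator, continuous boundary data $g^+$) on the shrunken domain $D_\e=\{\dist(\cdot,\pa D)>\e^q\}$, after using Lemma~\ref{cor-approx3} to show $u_\e(x+\e^q\nu)\le g^+(x)+o(1)$ on the inner boundary, and only then lets $\e\to 0$.

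Your envelope step also overreaches. You claim continuous $g^\pm_\delta$ with $\|g^+_\delta-g^-_\delta\|_{L^\infty(\pa D)}\le\delta$, but the paper's Lemma~\ref{overg-dconti} only guarantees $\delta$-continuity of $\ol g$ away from the \emph{finitely many} low-order rational directions $\cE_\delta$; near those points $\ol g$ may genuinely jump and no uniform sandwich exists. The paper's Lemma~\ref{thm1-approx-2} accordingly builds $h^\pm$ with gap $\le 2\delta$ only on $\pa D\setminus\bigcup_i B_r(z_i)$ and gap $O(1)$ near the $z_i$, then invokes the stable-class hypothesis (which is a statement about \emph{solutions} with small-support boundary data, not about $\ol g$ itself) to show this $O(1)$ gap on a set of shrinking support contributes $\le\delta$ on any fixed compact $K\subset D$. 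Your $\sum_k 2^{-k}\Phi_k$ removable-singularity device, besides treating countably many points at once rather than finitely many per $\delta$, presupposes singular barriers $\Phi_k$ whose existence is equivalent to $(n-1)\lambda>\Lambda$ (Lemma~\ref{overg-finite}), a strictly stronger assumption than the stable-class hypothesis.
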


You can find the definition that \eqref{eq-main} is  in the stable class for finite values of the boundary in Section \ref{sec-overg}. See the Definition \ref{def-overg-finite}. Heuristically, it implies that even if we change the value of $g$ at some finite points, the solution does not change. It is obvious in the Laplace equation because the solution can be represented as a integral on the boundary and the finite values of the boundary data $\overline g$ at finite points of the boundary is measure zero. However, it is not obvious in the Fully nonlinear elliptic equations. We only find the sufficient condition in Section \ref{sec-overg} but it is open whether the general Fully nonlinear equations are contained in the stable class for finite values of the boundary. 

\begin{remark}
Our argument can be applied if $F=F(M,y)$ depends on the $x$ variable. However, for simplicity, we only consider the case when $F$ is independent on the $x$ variable. 
\end{remark} 

%Outlines
In Section \ref{sec-vis}, we summarize the existence and regularity theory of the viscosity solution. In Section \ref{sec-hyper}, we define a corrector, a function on the half plain, and investigate their properties. By using the corrector, we define the effective boundary data $\overline g$ and we measure how the solution $u_\e$ and the corrector $w_\e$ are close in Section \ref{sec-cor}. In section \ref{sec-gconti}, the continuity of $\overline g$ will be discussed, and then we will focus on the proof of the Theorem \ref{thm-main-2} in the remaining section. 

%References
The existence and uniqueness of a viscosity solution can be found in \cite{CIL} and its regularity theory can be found in \cite{CC}, \cite{GT} and \cite{LT}. The interior homogenization result for fully nonlinear equation can be found \cite{E} in a periodic case, \cite{CSW}, \cite{LS1}, and \cite{LS2} in a random one. We also refer \cite{JKO} for the linear homogenization. The oscillating boundary data for the divergence equation can be considered in \cite{GM} and \cite{AL} and the homogenization of oscillating Neumann boundary data can be found in \cite{CKL} and \cite{BDLS}. In \cite{LS}, the authors showed the similar theorem for Laplace operator, or operators of divergence type with Green representation. In this paper, we try to show similar result for nonlinear non-divergence operator which require very different approach due to the lack of the representation. 

%%%%%%%%%%%%%%%%%%%%%%%%%%%%%%%%%%%%%%%%%%%%%%%%%%%%%
\section{General Facts of the Viscosity Solution}\label{sec-vis}
%%%%%%%%%%%%%%%%%%%%%%%%%%%%%%%%%%%%%%%%%%%%%%%%%%%%%

We say a continuos function $u \in \mathcal{C}^0(\overline D)$ is a viscosity super-solution of the equation 
\begin{equation} \label{eq-vis-1}
F(D^2u,Du,u,x) = f(x) 
\end{equation}
in $D$ if there exists a function $\varphi(x) \in \mathcal{C}^2$ which is defined some neighborhood of $x_0 \in D$ and $u-\varphi$ has a local maximum at $x_0 \in D$, then 
\begin{equation}
F(D^2 \varphi(x_0),D\varphi(x_0),u(x_0),x_0) \le f(x_0).
\end{equation}
We define the viscosity sub-solution in the similar way and we say $u$ is a viscosity solution of \eqref{eq-vis-1} if $u$ is a viscosity sub and super solution. 

The existence of viscosity solution is given at \cite{CIL}. 
\begin{theorem}[Existence and Uniqueness, \cite{CIL}] \label{thm-vis-ex}
There exists a unique continuous viscosity solution of the following equation,
\begin{equation} \begin{cases}\label{eq-vis-2}
F(D^2u,Du,u,x) = f(x) &\text{ in } D \\ 
u(x)=g(x) &\text{ on } \partial D 
\end{cases} \end{equation}
for any given continuous and bounded function $f(x)$ and $g(x)$ if the operator $F$ satisfies the structure condition in \cite{CIL}.
\end{theorem}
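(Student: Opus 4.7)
The plan is to establish existence via Perron's method and uniqueness via a comparison principle, with the continuous attainment of the boundary data enforced by explicit barriers. This is the standard Crandall--Ishii--Lions route, and the structural assumptions alluded to (uniform ellipticity or degenerate ellipticity together with proper monotonicity in the zeroth order variable and suitable continuity in $x$) are precisely what make every step go through.

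First I would prove the comparison principle: if $u$ is an upper semicontinuous subsolution and $v$ is a lower semicontinuous supersolution with $u\le v$ on $\partial D$, then $u\le v$ in $D$. The argument is by contradiction and doubling of variables. Assuming $\max_{\overline D}(u-v)>0$, one studies
\[
\Phi_\alpha(x,y) \;=\; u(x)-v(y)-\tfrac{1}{2\alpha}|x-y|^2
\]
on $\overline D\times\overline D$, observes that maximizers $(x_\alpha,y_\alpha)$ stay in the interior for small $\alpha$, and invokes the Ishii--Jensen theorem of sums to produce matrices $X_\alpha,Y_\alpha$ with $(p_\alpha,X_\alpha)$ in the closure of the second-order superjet of $u$ at $x_\alpha$, $(p_\alpha,Y_\alpha)$ in that of the subjet of $v$ at $y_\alpha$, and the pairing inequality $X_\alpha\le Y_\alpha$ in the quadratic form sense. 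Feeding these into the sub/super\-solution inequalities and exploiting uniform ellipticity (plus proper monotonicity in $u$, or an auxiliary strictification trick $u\mapsto u-\delta e^{\gamma x_1}$) delivers the contradiction.

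Next, to ensure the Perron envelope takes the boundary data continuously, I would construct local sub- and super-barriers at each $x_0\in\partial D$. Using an exterior sphere at $x_0$, set
\[
w^\pm(x) \;=\; g(x_0) \pm \bigl(\omega_g(|x-x_0|) + K\,\psi(|x-x_0|)\bigr),
\]
where $\omega_g$ is a modulus of continuity for $g$ and $\psi$ is a concave radial profile (e.g.\ $r^\beta$ for small $\beta\in(0,1)$, or $1-e^{-\gamma r}$ on the complement of the exterior ball). The uniform ellipticity bounds the action of $F$ on such radial test functions by the Pucci extremal operators $\mathcal{M}^\pm$, and choosing the parameters large enough makes $w^+$ a classical (hence viscosity) supersolution dominating $g$ on $\partial D$ and $w^-$ a subsolution dominated by $g$, with both matching $g(x_0)$ at $x_0$.

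Finally, I would execute Perron's method. Define
\[
\overline u(x) \;=\; \sup\bigl\{\, w(x) : w\in \mathrm{USC}(\overline D),\ w\text{ subsolution},\ w\le g \text{ on }\partial D\,\bigr\}.
\]
The classical Ishii lemmas show the upper semicontinuous envelope $\overline u^{\,*}$ is a subsolution and the lower semicontinuous envelope $\overline u_{*}$ a supersolution; the barriers above force $\overline u^{\,*}=\overline u_{*}=g$ on $\partial D$; and the comparison principle then yields $\overline u^{\,*}\le \overline u_{*}$ in $\overline D$, so $\overline u$ is continuous and is the unique viscosity solution. The main obstacle in this program is the comparison principle, because in the generality allowed by \cite{CIL} one cannot assume strict monotonicity in $u$, so the strictification device and the careful bookkeeping in the theorem of sums (including the continuity of $F$ in $x$ through a modulus of the form $\omega(|x-y|(1+|p_\alpha|))$) are precisely what must be verified under the structural hypotheses cited.
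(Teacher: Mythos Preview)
The paper does not supply its own proof of this statement: it is quoted as a black-box result from \cite{CIL}, and the surrounding text simply says ``The existence of viscosity solution is given at \cite{CIL}.'' Your proposal correctly reconstructs the standard Crandall--Ishii--Lions argument (comparison via doubling of variables and the theorem of sums, barriers for continuous attainment of the boundary data, then Perron's method), which is precisely the proof being cited; so there is nothing to compare against, and your outline is an accurate sketch of the referenced proof.
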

The structure conditions for $F$ can be found at \cite{CIL}. Becauses of the condition \ref{con-F-unif} and the continuity of the operator, we can find a viscosity solution of the equation \eqref{eq-main} for each $\e > 0$. 

\begin{lemma}[Comparison, \cite{CIL}] \label{vis-com}
Suppose that $u$ is a viscosity super-solution of the equation \eqref{eq-vis-2} and $v$ is a viscosity sub-solution of the same equation. Suppose also that $u \ge v$ on $\partial D$. Then we have $u \ge v$ in $\overline D$. 
\end{lemma}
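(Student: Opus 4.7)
The plan is the classical doubling-of-variables argument of Crandall--Ishii--Lions. I would argue by contradiction: set $M := \max_{\ol D}(v - u)$ and suppose $M > 0$; then the boundary hypothesis forces the maximum to be attained at some interior point $\hat x \in D$. Because $u, v$ are only continuous, direct differentiation is unavailable, so I regularize with the penalty
$$\Psi_\alpha(x,y) = v(x) - u(y) - \frac{\alpha}{2}|x-y|^2$$
on $\ol D \times \ol D$. Its maximum, attained at some $(x_\alpha, y_\alpha)$, satisfies the standard estimates $\alpha|x_\alpha - y_\alpha|^2 \to 0$ and, along a subsequence, $(x_\alpha, y_\alpha) \to (\hat x, \hat x)$; in particular $x_\alpha, y_\alpha \in D$ for $\alpha$ sufficiently large.

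I would then invoke the Crandall--Ishii maximum principle for semicontinuous functions to obtain, for each $\alpha$, symmetric matrices $X_\alpha \le Y_\alpha$ and a common covector $p_\alpha = \alpha(x_\alpha - y_\alpha)$ with $(p_\alpha, X_\alpha) \in \ol J^{2,+} v(x_\alpha)$ and $(p_\alpha, Y_\alpha) \in \ol J^{2,-} u(y_\alpha)$. The sub- and supersolution inequalities then read
$$F(X_\alpha, p_\alpha, v(x_\alpha), x_\alpha) \le f(x_\alpha), \qquad F(Y_\alpha, p_\alpha, u(y_\alpha), y_\alpha) \ge f(y_\alpha).$$
Uniform ellipticity \eqref{con-F-unif} together with $X_\alpha \le Y_\alpha$ gives $F(X_\alpha, q, r, z) \le F(Y_\alpha, q, r, z)$ when the lower-order arguments coincide, and continuity of $F$ and $f$ in the remaining slots drives all error terms to $0$ as $\alpha \to \infty$, which by itself yields only ``$0 \le 0$'' rather than a contradiction.

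To close the argument I would use strict monotonicity in the $u$-slot, which \eqref{eq-vis-2} generally does not provide; the standard remedy is to replace $v$ by the \emph{strict} subsolution $v^\eta(x) = v(x) - \eta\vp(x)$ for a fixed smooth $\vp \ge 0$ with $D^2\vp$ positive definite (for instance $\vp(x) = |x|^2$). Uniform ellipticity \eqref{con-F-unif} then upgrades the subsolution inequality to $F(D^2 v^\eta, \ldots) \le f - \delta(\eta)$ in the viscosity sense with $\delta(\eta) > 0$. Rerunning the doubling-of-variables argument for the pair $(v^\eta, u)$ produces a $\delta$ on the right-hand side that the vanishing error terms cannot absorb, contradicting $\max(v^\eta - u) > 0$; letting $\eta \to 0$ then gives $v \le u$ in $\ol D$. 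The single hard ingredient is the Crandall--Ishii matrix lemma itself, whose construction of $X_\alpha, Y_\alpha$ with the required ordering is nontrivial and relies on sup-convolution regularization plus Jensen's maximum principle; it is proved in \cite{CIL} under exactly the structure conditions imposed here, so I would simply invoke it.
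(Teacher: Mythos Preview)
The paper does not supply its own proof of this lemma: it is stated with the attribution \cite{CIL} and used as a black box, with the remark that ``the structure conditions for $F$ can be found at \cite{CIL}.'' Your outline is precisely the standard Crandall--Ishii--Lions doubling-of-variables argument from that reference, and it is correct; in particular your use of the perturbation $v^\eta = v - \eta|x|^2$ to manufacture a strict subsolution via the uniform ellipticity \eqref{con-F-unif} is the right device in the paper's setting, where the operators of interest are $F(D^2u, x/\e)$ with no zeroth-order term to provide strict monotonicity. There is nothing to compare here beyond noting that you have filled in what the paper deliberately outsourced.
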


The boundedness of $D$ is not necessary since Theorem \ref{thm-vis-ex} and Lemma \ref{vis-com} holds even for unbounded domains. For example, there is a viscosity solution when the domain is a half-plain. We refer \cite{CLV} for details.  

The following results in \cite{CC} will be used frequently in this paper.
\begin{prop}[\cite{CC}] \label{prop-vis-osc-holder}
Suppose that $u$ is a viscosity solution of \eqref{eq-vis-1} in $B_1(0)$. Then,
\begin{enumerate}
\item
there exists a constant $0<\gamma<1$, depending only on the dimension and the elliptic constants such that 
\begin{equation} \label{eq-vis-osc}
\osc_{B_{1/2}(0)} u \le \gamma \osc_{B_1(0)} u + \| f\|_{L^n(B_1(0))},
\end{equation}
\item
and then $u$ is in $\mathcal{C}^\alpha(\overline B_{1/2})$ with
\begin{equation} \label{eq-vis-holder}
\| u \|_{\mathcal{C}^\alpha(\overline B_{1/2}(0))} \le C\left( \| u\|_{L^\infty(B_1(0))} + \|f\|_{L^n(B_1)}\right)
\end{equation}
where $0 < \alpha <1$ and a constant $C$ depending only on the dimension and the elliptic constants $\lambda$ and $\Lambda$. 
\end{enumerate}
\end{prop}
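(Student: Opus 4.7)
The plan is to reduce both parts to the standard Krylov--Safonov / Caffarelli machinery for Pucci extremal operators, which is the central technical content of \cite{CC}. By uniform ellipticity \eqref{con-F-unif}, any viscosity solution $u$ of \eqref{eq-vis-1} satisfies in the viscosity sense
\[
\mathcal{M}^+_{\lambda,\Lambda}(D^2 u) \ge -|f|, \qquad \mathcal{M}^-_{\lambda,\Lambda}(D^2 u) \le |f|,
\]
where $\mathcal{M}^\pm_{\lambda,\Lambda}$ are the Pucci extremal operators (any bounded lower-order terms in \eqref{eq-vis-1} can be absorbed into the right-hand side). Hence any affine translate of $u$ belongs to the Caffarelli extremal classes $\underline{S}(\lambda,\Lambda,|f|)$ and $\overline{S}(\lambda,\Lambda,|f|)$, to which the weak Harnack inequality from \cite{CC} is directly applicable.

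For part (1), I would set $M = \sup_{B_1(0)} u$, $m = \inf_{B_1(0)} u$, and consider the normalized functions
\[
w_1 = \frac{u - m}{M - m + \|f\|_{L^n(B_1(0))}}, \qquad w_2 = \frac{M - u}{M - m + \|f\|_{L^n(B_1(0))}}.
\]
Both are nonnegative, bounded by $1$, and lie in the extremal class with right-hand side of size $\le 1$. On $B_{3/4}(0)$ at least one of the level sets $\{w_i \ge 1/2\}$ has measure at least $|B_{3/4}(0)|/2$; applying the weak Harnack inequality of \cite{CC} to that $w_i$ yields $\inf_{B_{1/2}(0)} w_i \ge \theta$ for some dimensional constant $\theta = \theta(n,\lambda,\Lambda) > 0$. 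Unwinding the normalization produces \eqref{eq-vis-osc} with $\gamma = 1 - \theta < 1$.

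For part (2), I would iterate (1) on a geometric sequence of balls $B_{2^{-k}}(x_0) \subset B_1(0)$ centered at an arbitrary $x_0 \in B_{1/2}(0)$. The positive homogeneity \eqref{con-phomo} together with the standard quadratic rescaling $v(y) = u(x_0 + r y)/r^2$ preserves membership in the Caffarelli extremal class, so (1) applies on every such ball and iterating $k$ times (a geometric series absorbs the successive source contributions) gives
\[
\osc_{B_{2^{-k}}(x_0)} u \le C \gamma^k \bigl(\|u\|_{L^\infty(B_1(0))} + \|f\|_{L^n(B_1(0))}\bigr).
\]
Interpolating across dyadic scales converts this into $\osc_{B_r(x_0)} u \le C r^\alpha (\|u\|_{L^\infty} + \|f\|_{L^n})$ with $\alpha = -\log_2 \gamma \in (0,1)$, which is precisely the pointwise content of \eqref{eq-vis-holder}; taking supremum in $x_0$ gives the full $\mathcal{C}^\alpha$ seminorm bound.

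The main obstacle is the weak Harnack inequality for nonnegative members of the Pucci extremal class, which drives (1). Its proof combines the Alexandrov--Bakelman--Pucci maximum principle with an $L^\varepsilon$-decay estimate proved through a Calderón--Zygmund cube decomposition. Since these are classical results carefully developed in \cite{CC}, I would invoke them as black boxes rather than reprove them, consistent with the paper citing the whole proposition to \cite{CC}.
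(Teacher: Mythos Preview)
The paper does not prove this proposition at all; it is quoted directly from \cite{CC} and used as a black box throughout. Your sketch is a faithful outline of the standard Krylov--Safonov argument exactly as it appears in \cite{CC}, so there is nothing to compare.

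One small remark: your invocation of the positive homogeneity hypothesis \eqref{con-phomo} in the rescaling step of part (2) is unnecessary. Once you have passed to the Pucci extremal class $S(\lambda,\Lambda,|f|)$, that class is automatically closed under the rescaling $v(y)=u(x_0+ry)$ (the Pucci operators are positively $1$-homogeneous by definition), regardless of whether the original $F$ satisfies \eqref{con-phomo}. This matters because the proposition is stated for the general equation \eqref{eq-vis-1}, which includes operators not assumed to be positively homogeneous; your reduction to the Pucci class in the first paragraph already handles this, so simply drop the reference to \eqref{con-phomo}.
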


We note that the domain $B_1(0)$ in the Proposition above can be changed to general domain $D$ and $B_{1/2}(0)$ also can be replaced by $K$ such $\overline K \subset D$ by using the covering argument. In this case, the constant $C$ depends on $K$ and $D$. 

For $M\in\cS^n$ and $0<\lambda \leq \Lambda,$  the Pucci's extremal operators, playing a crucial role in the study of fully nonlinear elliptic equations, are defined as
\begin{equation} \label{def-vis-pucci} \begin{aligned}
\cM^+_{\lambda,\Lambda}(M)&=\cM^+(M)=\sup_{A\in\cA_{\lambda,\Lambda}} [ \tr(AM)] \\ \cM^-_{\lambda,\Lambda}(M)&=\cM^-(M)=\inf_{A\in\cA_{\lambda,\Lambda}}[\tr(AM)]
\end{aligned} \end{equation}
where $\cA_{\lambda,\Lambda}$ consists of the symmetric matrices, the eigenvalues of which lie in $[\lambda,\Lambda]$. Note that for $\lambda=\Lambda=1,$  the Pucci`s extremal operators $\cM^\pm$ simply coincide with the Laplace operator.

Let $\mathcal{S}(\lambda, \Lambda)$ be the family of all functions $u$ satisfying 
\begin{equation}
\cM^+(D^2u) \ge 0,\text{ and } \cM^-(D^2 u) \le 0
\end{equation}
in the viscosity sense. We note that all the viscosity solutions of \eqref{eq-vis-1} for $f=0$ are in $\mathcal{S}(\lambda,\Lambda)$ and all the functions in $\mathcal{S}(\lambda,\Lambda)$ satisfy the result in the Proposition \ref{prop-vis-osc-holder}. So, roughly speaking, if $u$ is in $\mathcal{S}(\lambda,\Lambda)$, then $u$ is a viscosity solution of some uniformly elliptic operator in the same class. 
\begin{theorem}[\cite{CC}] \label{thm-vis-diff}
Suppose that $u$ and $v$ are viscosity solutions of \eqref{eq-vis-1}. Suppose also that $F$ in \eqref{eq-vis-1} is independent of $Du$ and $u$ variables. Then, 
\begin{equation}
u-v \in \mathcal{S}(\lambda/n,\Lambda). 
\end{equation}
Hence, the maximum principle and the results in Proposition \ref{prop-vis-osc-holder} also valid for $u-v$ as a viscosity solution of some uniformly elliptic equation.
\end{theorem}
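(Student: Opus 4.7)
The plan is to verify both viscosity inequalities
\begin{equation*}
\mathcal{M}^+_{\lambda/n,\Lambda}(D^2(u-v)) \ge 0 \qquad \text{and} \qquad \mathcal{M}^-_{\lambda/n,\Lambda}(D^2(u-v)) \le 0
\end{equation*}
that together characterize membership in $\mathcal{S}(\lambda/n,\Lambda)$. The guiding heuristic is transparent: at any (hypothetical) point where $u$ and $v$ are classically twice differentiable the identity $F(D^2 u,x) - F(D^2 v, x) = 0$ combined with \eqref{con-F-unif} pinches $0$ between $\mathcal{M}^-(D^2u - D^2v)$ and $\mathcal{M}^+(D^2u - D^2v)$. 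The factor $1/n$ in the first constant arises when the Frobenius-norm ellipticity of \eqref{con-F-unif} is converted into the trace-norm bound that defines the Pucci operators in \eqref{def-vis-pucci}.

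To make this rigorous for merely continuous $u, v$, I would use the Crandall--Ishii--Lions doubling-of-variables technique. For the first inequality, fix $\varphi \in \mathcal{C}^2$ with $(u-v)-\varphi$ attaining a strict local maximum at $x_0 \in D$ (strictness is arranged by subtracting a small quartic bump), and for large $\alpha > 0$ consider
\begin{equation*}
\Phi_\alpha(x,y) = u(x) - v(y) - \varphi(x) - \frac{\alpha}{2}|x-y|^2
\end{equation*}
on a small closed ball around $(x_0, x_0)$. Standard arguments produce maximizers $(x_\alpha, y_\alpha) \to (x_0, x_0)$ with $\alpha|x_\alpha - y_\alpha|^2 \to 0$. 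The Theorem on Sums then yields symmetric matrices $X_\alpha, Y_\alpha$ with
\begin{equation*}
(D\varphi(x_\alpha) + \alpha(x_\alpha - y_\alpha),\, X_\alpha) \in \overline{J}^{2,+}u(x_\alpha), \qquad (\alpha(x_\alpha - y_\alpha),\, Y_\alpha) \in \overline{J}^{2,-}v(y_\alpha),
\end{equation*}
and a matrix inequality that, tested against the diagonal of $\mathbb{R}^n \oplus \mathbb{R}^n$, reduces to
\begin{equation*}
X_\alpha - Y_\alpha \le D^2\varphi(x_\alpha) + o_\alpha(1)\, I
\end{equation*}
once the free parameter in the theorem is chosen as a suitable function of $\alpha$.

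Feeding these jets into the viscosity condition for $u$ and $v$ gives $F(X_\alpha, x_\alpha) \ge f(x_\alpha)$ and $F(Y_\alpha, y_\alpha) \le f(y_\alpha)$. Subtracting, using the pointwise Pucci envelope $F(M+N,z) - F(M,z) \le \mathcal{M}^+_{\lambda/n,\Lambda}(N)$ (which follows from \eqref{con-F-unif} by diagonalizing $N$ and summing rank-one increments), and then applying monotonicity of $\mathcal{M}^+$ together with the matrix inequality above, I obtain
\begin{equation*}
f(x_\alpha) - f(y_\alpha) \le \mathcal{M}^+_{\lambda/n,\Lambda}(X_\alpha - Y_\alpha) \le \mathcal{M}^+_{\lambda/n,\Lambda}(D^2\varphi(x_\alpha)) + o_\alpha(1).
\end{equation*}
Letting $\alpha \to \infty$ and invoking continuity of $f$ and $D^2\varphi$ yields $\mathcal{M}^+_{\lambda/n,\Lambda}(D^2\varphi(x_0)) \ge 0$. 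The companion bound $\mathcal{M}^-_{\lambda/n,\Lambda}(D^2(u-v)) \le 0$ follows by symmetry: apply the same argument to $v - u$ and $-\varphi$ at a local maximum, and invoke the identity $\mathcal{M}^-_{\lambda/n,\Lambda}(M) = -\mathcal{M}^+_{\lambda/n,\Lambda}(-M)$.

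The main obstacle is precisely what forces the detour through jets: viscosity theory alone supplies no common point at which both $u$ and $v$ are classically twice differentiable, so the naive pointwise subtraction $F(D^2 u) - F(D^2 v) = 0$ is unavailable. What substitutes for it is the matrix inequality delivered by the Theorem on Sums, and the delicate content of the proof is that although the individual matrices $X_\alpha$ and $Y_\alpha$ may themselves be of order $\alpha$, their \emph{difference} is controlled by $D^2 \varphi(x_\alpha)$ up to a vanishing error. A secondary concern, which does not arise in the setting where this theorem is actually invoked (since the relevant operators in the homogenized limit are $x$-independent), would be handling the extra term $F(Y_\alpha, x_\alpha) - F(Y_\alpha, y_\alpha)$ in the subtraction when $F$ genuinely depends on $x$; that would require a modulus of continuity for $F(\cdot, x)$ uniform in the matrix argument.
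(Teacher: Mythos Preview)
The paper does not prove this theorem; it simply defers to Chapter~5 of \cite{CC} and asserts that ``the same proof also holds'' when $F$ depends on $x$. What \cite{CC} actually does there is the sup/inf-convolution route: replace $u$ and $v$ by their semiconvex and semiconcave regularizations $u^\epsilon$, $v_\epsilon$, invoke Aleksandrov's theorem to obtain almost-everywhere second derivatives, carry out the pointwise Pucci comparison at those points, and then let $\epsilon\to0$. Your doubling-of-variables argument via the Theorem on Sums is the other standard route, the one emphasized in \cite{CIL}; the two are essentially interchangeable (indeed the Theorem on Sums is itself typically \emph{proved} through sup/inf convolutions), and your execution is correct.

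Two remarks. First, your diagonal test of the matrix inequality is exactly right: when $A$ is the Hessian of $\varphi(x)+\tfrac{\alpha}{2}|x-y|^2$, the $\alpha$-terms cancel on the diagonal, and a short computation shows that even the $\epsilon A^2$ contribution collapses to $\epsilon(D^2\varphi)^2$ there, so no $o_\alpha(1)$ is really needed --- any fixed small $\epsilon$ already gives $X_\alpha-Y_\alpha\le D^2\varphi(x_\alpha)+\epsilon(D^2\varphi(x_\alpha))^2$. Second, the $x$-dependence issue you flag at the end is not merely secondary in this paper: the operators here genuinely depend on the fast variable, and the cross-term $F(Y_\alpha,x_\alpha)-F(Y_\alpha,y_\alpha)$ must be controlled while $Y_\alpha$ may have norm of order $\alpha$. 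The paper glosses over this with a one-line remark; to make it honest one needs a structural hypothesis such as $|F(M,x)-F(M,y)|\le\omega(|x-y|)(1+\|M\|)$, which together with the standard bound $\|Y_\alpha\|\lesssim\alpha$ and $\alpha|x_\alpha-y_\alpha|^2\to0$ is enough to close the estimate.
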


The proof of Theorem above can be found in chapter 5 of \cite{CC}. Although, in \cite{CC}, they considered the case when \eqref{eq-vis-1} is independent of the $x$ variable, the same proof also holds in our case.

%%%%%%%%%%%%%%%%%%%%%%%%%%%%%%%%%%%%%%%%%%%%%%%%%%%%%
\section{Functions Defined on a Half-plain}\label{sec-hyper}
%%%%%%%%%%%%%%%%%%%%%%%%%%%%%%%%%%%%%%%%%%%%%%%%%%%%%
In this section, we define a corrector to describe the effective boundary data. 
\begin{definition} \item \label{def-hyper-irr}
\begin{enumerate}
\item
A vector $\nu \in S^{n-1}$ is irrational if $\frac{\nu_i}{\nu_j}$ is an irrational number for some $1\le i,j \le n $. 
\item
A vector $\nu \in S^{n-1}$ is rational if it is not irrational. 
\end{enumerate}
\end{definition}
The above definition is equivalent to the definition of rational and irrational direction in the introduction. We denote $\mathcal R$ as the set of all rational directions in $S^{n-1}$ and $\mathcal{IR}$ as the set of all irrational directions in $S^{n-1}$. Note that the number of elements of $\mathcal R$ is countable. 

According to \cite{LSY}, every irrational vector $\nu$ has a averaging property. The following Lemma is a modification of the Lemma 5.2.2 in \cite{LSY} for the uniform distribution.
\begin{lemma}[\cite{LSY}] \label{hyper-ud}
Let $Q^\prime_R$ be any cube with side length $R>0$ in $\R^{n-1}$. Suppose that $h$ is a function defined on $Q^\prime_R$ such that 
\begin{equation}
h(y^\prime) = \alpha_1 y_1 + \alpha_2 y_2 + \cdots + \alpha_{n-1} y_{n-1} 
\end{equation}
for some $(\alpha_1,\alpha_2, \cdots,\alpha_{n-1}) \in \R^{n-1}$. Denote that 
\begin{equation} \begin{aligned}
N( R) &= \#\left( Q_R^\prime \cap \mathbb{Z}^{n-1} \right) \text{ and }\\
A(\delta,t,R) &= \# \{ m \in Q_R^\prime \cap \mathbb{Z}^{n-1} : h(m)/\mathbb{Z} \in [t,t+\delta)/\mathbb{Z} \}.
\end{aligned} \end{equation}
If one of $\alpha_i$ ( $i=1,2,\cdots,n-1$ ) is irrational, then there exists a modulus of continuity $\rho$ such that $\rho(0+) = 0$ and 
\begin{equation}
\left| \displaystyle\frac{A(\delta,t,R)}{N( R)} -\delta \right|\le\rho\left(\frac{1}{R}\right).
\end{equation}
\end{lemma}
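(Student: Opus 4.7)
The lemma is a quantitative Weyl equidistribution statement for the linear form $h(m) = \alpha_1 m_1 + \cdots + \alpha_{n-1} m_{n-1}$ sampled on the lattice points of a cube: irrationality of a single coefficient $\alpha_{j_0}$ suffices to force equidistribution mod~$1$, with a rate controllable purely in $R$. The plan is to invoke Weyl's criterion: I would first reduce the counting problem to bounding exponential sums via the Erd\H{o}s--Tur\'an inequality, and then exploit the fact that $h$ is additive across coordinates so those exponential sums factor, allowing cancellation to be extracted from the one irrational direction.

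Concretely, the Erd\H{o}s--Tur\'an inequality yields, for every integer $K \ge 1$,
\begin{equation*}
\left| \frac{A(\delta,t,R)}{N(R)} - \delta \right| \le \frac{C}{K} + C \sum_{k=1}^{K} \frac{1}{k}\left| \frac{1}{N(R)} \sum_{m \in Q_R' \cap \mathbb{Z}^{n-1}} e^{2\pi i k h(m)} \right|,
\end{equation*}
with $C$ absolute. The essential feature is that the right-hand side is independent of $t$ and $\delta$, which is exactly what the lemma requires of $\rho$. Because $h$ is coordinatewise linear, the interior exponential sum factors:
\begin{equation*}
\sum_{m \in Q_R' \cap \mathbb{Z}^{n-1}} e^{2\pi i k h(m)} = \prod_{j=1}^{n-1} \sum_{m_j} e^{2\pi i k \alpha_j m_j}.
\end{equation*}

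I would then bound the factor corresponding to the irrational $\alpha_{j_0}$ by the standard geometric-series estimate $1/|\sin(\pi k \alpha_{j_0})|$ and each of the remaining $n-2$ factors trivially by $R+1$. Combined with $N(R) \asymp R^{n-1}$, this leaves
\begin{equation*}
\left| \frac{A(\delta,t,R)}{N(R)} - \delta \right| \le \frac{C}{K} + \frac{C}{R}\sum_{k=1}^{K} \frac{1}{k\,|\sin(\pi k \alpha_{j_0})|}.
\end{equation*}
Since $\alpha_{j_0} \notin \mathbb{Q}$, we have $|\sin(\pi k \alpha_{j_0})| > 0$ for every $k \ge 1$, so $M(K) := \sum_{k=1}^{K} 1/(k|\sin(\pi k \alpha_{j_0})|)$ is finite for each $K$. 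Picking $K = K(R) \to \infty$ slowly enough that $M(K(R))/R \to 0$ (for instance $K(R) = \lfloor \log \log R \rfloor$, with no Diophantine hypothesis required) makes both summands vanish as $R \to \infty$, and setting $\rho(s) := \sup_{R \ge 1/s} \bigl( C/K(R) + C\,M(K(R))/R \bigr)$ produces a nondecreasing modulus with $\rho(0+) = 0$ satisfying the claimed bound.

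The only nontrivial step is arranging that the rate $\rho$ be uniform in $t$ and $\delta$, which is exactly what the Erd\H{o}s--Tur\'an inequality delivers; thereafter the argument reduces to elementary exponential-sum estimates and bookkeeping. A secondary subtlety is that without a Diophantine assumption on $\alpha_{j_0}$ the quantity $M(K)$ may grow arbitrarily quickly in $K$, forcing $K(R)$ to grow very slowly, so $\rho$ unavoidably depends on $\alpha$; but the lemma only asserts existence of some such $\rho$, so this is acceptable.
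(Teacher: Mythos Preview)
The paper does not supply its own proof of this lemma: it is quoted from the external reference \cite{LSY} (described there as a modification of Lemma~5.2.2), so there is no in-paper argument against which to compare your proposal.

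Your approach via the Erd\H{o}s--Tur\'an inequality and factored Weyl sums is the standard and correct route to such quantitative equidistribution statements, and it delivers exactly the uniformity in $t$ and $\delta$ that the lemma demands. The one small imprecision is the parenthetical suggestion $K(R)=\lfloor\log\log R\rfloor$: without a Diophantine hypothesis on $\alpha_{j_0}$, the sum $M(K)$ can grow faster than any prescribed function of $K$ (think of Liouville $\alpha_{j_0}$), so no fixed explicit choice of $K(R)$ works universally. You already flag this in your final paragraph, and the existence of \emph{some} $K(R)\to\infty$ with $M(K(R))/R\to 0$ is elementary (for instance $K(R):=\max\{K:M(K)\le\sqrt{R}\}$), so the argument goes through once that parenthetical example is dropped or replaced.
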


\begin{figure}
\includegraphics[width=0.75\textwidth]{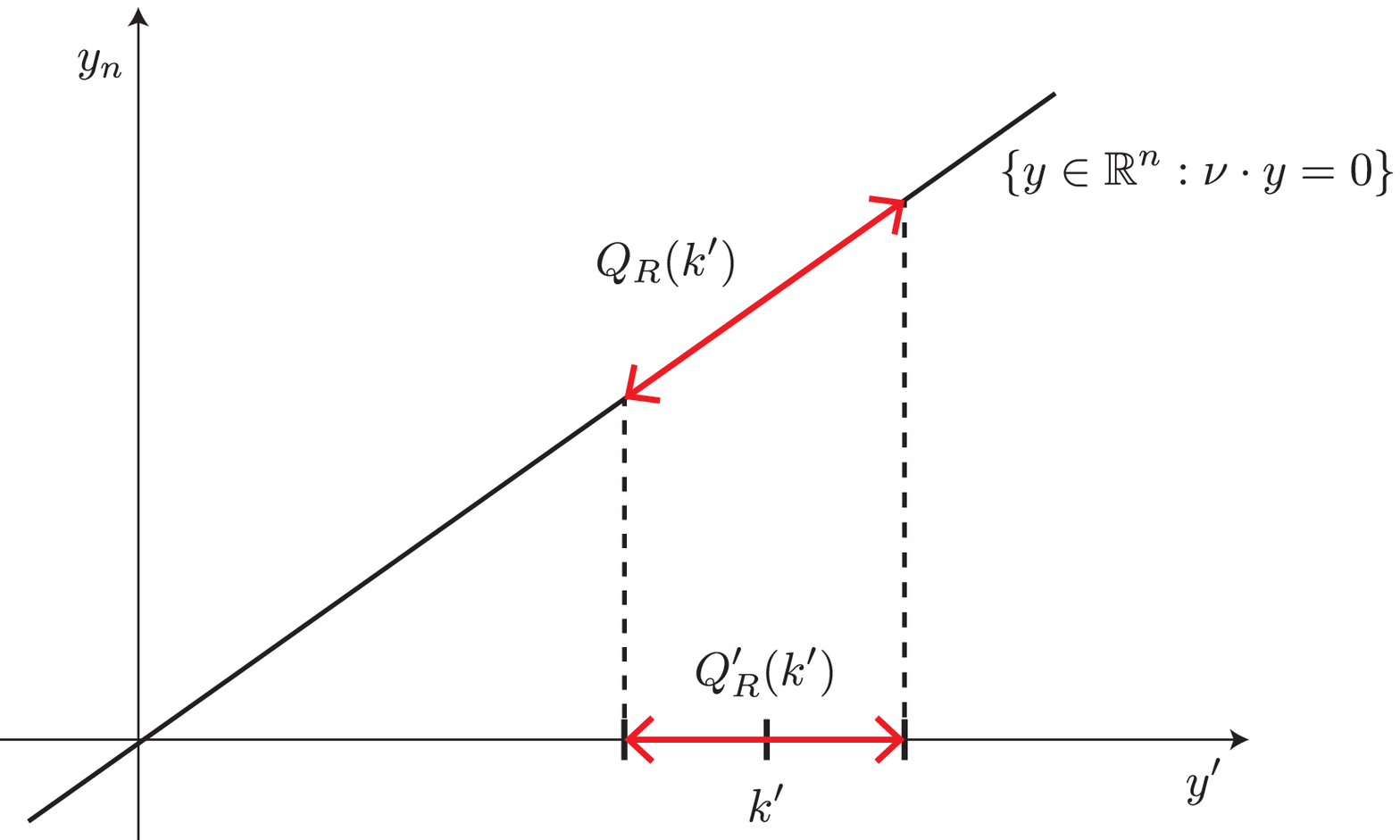}
\caption{The shape of $Q^\prime_R(k^\prime)$ and $Q_R(k^\prime)$.} 
\end{figure}

Suppose that $\nu_n \neq0$. Then we may think $\{ y \in \R^n : \nu \cdot y = 0 \}$ as a graph defined as 
\begin{equation}
y_n = -\frac{\nu_1}{\nu_n} y_1 - \cdots -\frac{\nu_{n-1}}{\nu_n} y_{n-1}.
\end{equation}
Let $Q^\prime_R$ be a cube in $\R^{n-1} \times \{0\}$ centered at $0$ with side length $R$ and $Q^\prime_R(k^\prime) = R k^\prime + Q^\prime_R$ for each $k^\prime \in \mathbb{Z}^{n-1} \times \{0\}$. We also let $Q_R(k^\prime)$ be a piece of $\{ y \in \R^n : \nu \cdot y = 0 \}$ whose projection to $\R^{n-1} \times \{0\}$ is $Q^\prime_R (k^\prime)$.

\begin{lemma} \label{hyper-ap}
Let $Q^\prime_R(k^\prime)$ and $Q_R(k^\prime)$ be given as the above for a direction $\nu \in S^{n-1}$. Assume that $\nu \in \mathcal{IR}$. Then, for any fixed $\delta >0$ and $k^\prime\in \mathbb{Z}^{n-1} \times \{0\}$, there exists a constant $R$, depending only on $\delta$, $\nu$ and a point $\hat y(k^\prime) \in Q_R(k^\prime)$ such that $|\hat y(k^\prime) - \widetilde m| \le \delta$ for some $\widetilde m \in \mathbb{Z}^n$. 
Moreover, for any given periodic function $g(y) \in \mathcal{C}^1(\R^n)$, we have
\begin{equation}
|g(y+\hat y(k^\prime)) -g(y)| \le \| \nabla g \|_{L^\infty(\R^n)} \delta 
\end{equation}
for all $y \in \R^n$. 
\end{lemma}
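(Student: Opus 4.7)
The plan is to use Lemma \ref{hyper-ud} directly. Parametrizing the hyperplane $\{y \in \mathbb{R}^n : \nu \cdot y = 0\}$ as $y_n = h(y') = \sum_{i=1}^{n-1}\alpha_i y_i$ with $\alpha_i = -\nu_i/\nu_n$, it suffices to find $m' \in Q^\prime_R(k^\prime) \cap \mathbb{Z}^{n-1}$ such that $h(m')$ lies within distance $\delta$ of some integer. Once this is achieved, $\hat y(k^\prime) := (m', h(m'))$ sits on the hyperplane inside $Q_R(k^\prime)$, and the integer point $\widetilde m := (m', \lfloor h(m') \rfloor) \in \mathbb{Z}^n$ differs from $\hat y(k^\prime)$ only in the last coordinate by less than $\delta$, giving $|\hat y(k^\prime) - \widetilde m| < \delta$.

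To apply Lemma \ref{hyper-ud} I need at least one $\alpha_i$ to be irrational. Since $\nu \in \mathcal{IR}$, some ratio $\nu_p/\nu_q$ is irrational. If every $\alpha_k = -\nu_k/\nu_n$ ($k = 1, \ldots, n-1$) were rational, then for $p, q \ne n$ one would have $\nu_p/\nu_q = \alpha_p/\alpha_q \in \mathbb{Q}$, and for $q = n$ one would have $\nu_p/\nu_n = -\alpha_p \in \mathbb{Q}$; either case contradicts the irrationality of $\nu$. Thus the hypothesis of Lemma \ref{hyper-ud} is met.

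Applying Lemma \ref{hyper-ud} with $t = 0$ to the function $h$ on $Q^\prime_R(k^\prime)$ produces a modulus of continuity $\rho$, depending only on $\nu$, such that $|A(\delta, 0, R)/N(R) - \delta| \le \rho(1/R)$. Choosing $R = R(\delta, \nu)$ so large that $\rho(1/R) < \delta/2$ and $N(R) \ge 2/\delta$ forces $A(\delta, 0, R) \ge (\delta/2)N(R) \ge 1$. Hence there is $m' \in Q^\prime_R(k^\prime) \cap \mathbb{Z}^{n-1}$ with $h(m') - \lfloor h(m') \rfloor \in [0, \delta)$, and setting $\hat y(k^\prime)$ and $\widetilde m$ as above completes the first claim. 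Note that $R$ depends only on $\delta$ and $\nu$ because Lemma \ref{hyper-ud} applies uniformly to any translate of a cube of side $R$.

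The moreover claim is immediate: since $\widetilde m \in \mathbb{Z}^n$ and $g$ is $\mathbb{Z}^n$-periodic, $g(y + \hat y(k^\prime)) = g(y + \hat y(k^\prime) - \widetilde m)$, and the mean value theorem yields
\[
|g(y + \hat y(k^\prime)) - g(y)| = |g(y + \hat y(k^\prime) - \widetilde m) - g(y)| \le \|\nabla g\|_{L^\infty(\mathbb{R}^n)} |\hat y(k^\prime) - \widetilde m| \le \|\nabla g\|_{L^\infty(\mathbb{R}^n)} \delta.
\]
The substantive equidistribution work is carried out in Lemma \ref{hyper-ud}, so no serious obstacle remains; the only minor point that requires thought is the pigeonhole-type verification that at least one $\alpha_i$ is irrational, after which the geometric interpretation (lattice approximation of the hyperplane) and $\mathbb{Z}^n$-periodicity of $g$ close the argument.
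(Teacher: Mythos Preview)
Your proof is correct and follows essentially the same route as the paper: parametrize the hyperplane as a graph $y_n=h(y')$, invoke Lemma~\ref{hyper-ud} to find an integer point $m'\in Q'_R(k')$ with $h(m')$ having fractional part less than $\delta$, set $\hat y(k')=(m',h(m'))$ and $\widetilde m=(m',\lfloor h(m')\rfloor)$, and conclude via $\mathbb{Z}^n$-periodicity of $g$. Your verification that some $\alpha_i$ is irrational is in fact more careful than the paper's, which simply assumes without loss of generality that $\nu_1/\nu_n$ is irrational.
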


\begin{remark}
The above Lemma tells us that if $\nu$ is irrational, then a periodic function $g(y)$ is almost periodic on $\{ y \in \R^n : \nu \cdot y = \nu \cdot y_0 \}$ for all $y_0 \in \R^n$. This property is crucial to obtain a homogenized result.
\end{remark}

\begin{proof}
For simplicity, we assume that $\nu_n \neq 0$ and $\frac{\nu_1}{\nu_n}$ is an irrational number. Let $m \in \mathbb{Z}^{n-1} \times \{0\}$ be a point in $Q^\prime_R(k^\prime)$ and let $t=t(m) \in [0,1)$ be the fractional part of $h(m) = \frac{\nu_1}{\nu_n} m_1 + \cdots +\frac{\nu_{n-1}}{\nu_n} m_{n-1}$. 

Set $N_R = \#\{ \mathbb{Z}^{n-1} \times \{0\} \cap Q^\prime_R(k^\prime)\}$ and $A_R = \# \left\{ m \in \mathbb{Z}^{n-1} \times \{0\} \cap Q^\prime_R(k^\prime) : t(m) \in [0, \delta ) \right\}$.
From Lemma \ref{hyper-ud}, there is a modulus of continuity $\rho$ such that
\begin{equation}
A_R \approx \left(\delta + \rho\left(\frac{1}{R}\right) \right)N_R \approx \displaystyle\frac{1}{R^{n-1}}  \left(\delta + \rho\left(\frac{1}{R}\right) \right)
\end{equation}
for any given $\delta > 0$.
Therefore $A_R$ is nonempty if $R$ is large enough. Note that the modulus of continuity depends only on the direction $\nu$. Fix $R = R(\delta,\nu) >0$ such that $A_R$ becomes nonempty and then choose $m \in A_R$. Let $m \in A_R$, $\hat y(k^\prime) = \left( m, h(m)\right)$, and $\widetilde m = \left(m, h(m)-t(m) \right)$. Then, from the choice of $m$, we have
\begin{equation}
|t(\hat y)| = |\hat y(k^\prime) - \widetilde m|  \le \delta.
\end{equation}

Since $\widetilde m$ is an integer point and $g$ is periodic, we have
\begin{equation} \label{eq-hyper-ap}
g (y+ \hat y(k^\prime) ) = g ( y+ \hat y(k^\prime) - \widetilde m ) = g(y+t(m) e_n)
\end{equation}
for all $y \in \R^n$ and hence we conclude 
\begin{equation}
| g(y+\hat y(k^\prime)) -g(y) | = | g(y+t(m) e_n) - g(y) | \le \| \nabla g \|_{L^\infty(\R^n)} \delta .
\end{equation}
\end{proof}

Let us consider the following corrector equation,
\begin{equation} \label{eq-hyper-cor1} \begin{cases}
F(D^2 w,y) = 0 &\text{ in } H(\nu, y_0) \\
w(y) = g(y) &\text{ on } \partial H(\nu,y_0)
\end{cases} \end{equation}
where $H(\nu,y_0) = \{y \in \R^n : \nu \cdot y \ge \nu \cdot y_0 \}$. 

Via Perron's method in \cite{CIL}, we have the following:
\begin{lemma} \label{hyper-ex}
There is a viscosity solution of \eqref{eq-hyper-cor1} satisfying 
\begin{equation}
|w(y)| \le \|g\|_{L^\infty(\partial H(\nu,y_0))}.
\end{equation}
\end{lemma}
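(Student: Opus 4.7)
The plan is to apply Perron's method, as hinted. The three ingredients needed are global super- and sub-solutions that sandwich $g$ (to give the $L^\infty$ bound for free), local barriers at each boundary point (for continuous attainment of $g$), and a comparison principle on the unbounded half-space $H(\nu,y_0)$.

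The first ingredient is essentially free. Positive homogeneity \eqref{con-phomo} with $M = 0$ (equivalently, letting $t\to 0^+$) forces $F(0,y) = 0$, so the constants $w^\pm(y) \equiv \pm \|g\|_{L^\infty(\partial H(\nu,y_0))}$ are classical solutions of $F(D^2 w, y) = 0$ satisfying $w^- \le g \le w^+$ on $\partial H(\nu,y_0)$. Any solution squeezed between them automatically meets the claimed bound.

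For the second ingredient, at each $y_1 \in \partial H(\nu,y_0)$ I would build the standard flat-boundary barriers: take the affine tangent to $g$ at $y_1$ along $\partial H$ (which makes sense because $g(x,\cdot) \in \mathcal{C}^{2,\alpha}$ uniformly by \eqref{con-g}) plus a correction of the form $A(\nu \cdot (y - y_0)) - B|y - y_1|^2$; uniform ellipticity \eqref{con-F-unif} permits choosing $A, B$ so that $\cM^\pm$ applied to the correction has the required sign and so that the barrier dominates (resp.\ is dominated by) $g$ on $\partial H$ locally while exceeding (resp.\ lying below) the global constants $w^\pm$ outside a neighborhood. The uniform $\mathcal{C}^{2,\alpha}$ control from \eqref{con-g} makes the barrier parameters uniform in $y_1$.

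Setting
\begin{equation*}
w(y) := \sup\bigl\{\, v(y) : v \in \mathrm{USC}(\overline{H(\nu,y_0)}),\ v \text{ subsolution},\ v \le g \text{ on } \partial H,\ v \le \|g\|_{L^\infty} \,\bigr\},
\end{equation*}
the standard Perron machinery of \cite{CIL} shows $w^*$ is a subsolution and $w_*$ a supersolution, and the barriers force $w^* = w_* = g$ on $\partial H$. The main obstacle is comparison on the unbounded domain: the classical principle needs a growth condition at infinity. Here it is supplied automatically by the a priori bound $|w^*|, |w_*| \le \|g\|_{L^\infty}$ coming from $w^\pm$, together with the positive homogeneity of $F$ (so that rescalings of test functions behave well), placing us in the framework of the half-space comparison theorem of \cite{CLV}. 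Invoking it gives $w^* = w_*$, producing the desired continuous viscosity solution with the claimed $L^\infty$ bound.
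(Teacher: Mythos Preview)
Your proposal is correct and follows exactly the approach the paper indicates: the paper gives no proof beyond the single sentence ``Via Perron's method in \cite{CIL}, we have the following,'' and you have simply filled in the standard ingredients of that method (constant global barriers from $F(0,y)=0$, local boundary barriers from the $\mathcal{C}^{2,\alpha}$ regularity of $g$, and the half-space comparison principle of \cite{CLV}). There is nothing to contrast.
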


\begin{lemma} \label{hyper-uniq}
The solution of \eqref{eq-hyper-cor1} is unique. 
\end{lemma}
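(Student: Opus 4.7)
The plan is to reduce uniqueness to a Liouville-type statement: any bounded function in the Pucci class $\mathcal{S}(\lambda/n,\Lambda)$ on a half-space with zero boundary data must vanish identically. Let $w_1,w_2$ be two viscosity solutions of \eqref{eq-hyper-cor1}. By Lemma \ref{hyper-ex} both are bounded by $\|g\|_{L^\infty(\partial H)}$, so their difference $v:=w_1-w_2$ is bounded by $2\|g\|_{L^\infty}$, vanishes on $\partial H(\nu,y_0)$, and by Theorem \ref{thm-vis-diff} (applied with $F(\cdot,y)$, which is independent of $Du$ and $u$ at each frozen $y$) belongs to $\mathcal{S}(\lambda/n,\Lambda)$. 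The task thus reduces to showing $v\equiv 0$.

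The main obstacle is that Lemma \ref{vis-com} is formulated for bounded domains, so it does not apply directly on $H(\nu,y_0)$. The plan is to replace the global comparison argument by a quantitative boundary decay estimate on expanding half-balls and let the scale go to infinity. After an affine change of coordinates we may assume $y_0=0$ and $\nu=e_n$, so that $H=\{y_n>0\}$. For any $u\in\mathcal{S}(\lambda/n,\Lambda)$ on $B_R^+:=B_R(0)\cap H$ that vanishes on $B_R(0)\cap\{y_n=0\}$, a standard iteration of the oscillation estimate \eqref{eq-vis-osc} together with the Krylov--Safonov boundary growth lemma (see Chapter 4 of \cite{CC}) yields
\begin{equation*}
|u(y)|\le C\,\|u\|_{L^\infty(B_R^+)}\left(\frac{y_n}{R}\right)^{\alpha},\qquad y\in B_{R/2}^+,
\end{equation*}
for some $\alpha\in(0,1)$ and $C$ depending only on $n,\lambda,\Lambda$.

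To finish, I would fix an arbitrary $y^*=(y^{*\prime},y_n^*)\in H$ and apply the above estimate to $v$ on the translated half-ball $B_R((y^{*\prime},0))\cap H$. Since $v$ vanishes on the flat portion of the boundary and $\|v\|_{L^\infty(H)}\le 2\|g\|_{L^\infty(\partial H)}$, one obtains
\begin{equation*}
|v(y^*)|\le 2C\|g\|_{L^\infty(\partial H)}\left(\frac{y_n^*}{R}\right)^{\alpha}
\end{equation*}
for every sufficiently large $R$. Sending $R\to\infty$ forces $v(y^*)=0$, and since $y^*\in H$ was arbitrary, $w_1\equiv w_2$. The hardest step is the boundary Hölder estimate; it is by now classical for the Pucci class, but I expect to include a brief justification in the actual write-up so the paper remains self-contained. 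Once that is in hand, every other step is a routine application of the machinery summarized in Section \ref{sec-vis}.
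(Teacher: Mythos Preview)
Your argument is correct, but it takes a more hands-on route than the paper. The paper's proof is essentially two lines: it invokes Theorem \ref{thm-vis-diff} to put $w_1-w_2$ into $\mathcal{S}(\lambda/n,\Lambda)$, and then applies the weak maximum principle for unbounded domains from \cite{CLV} to conclude. (Recall the remark immediately after Lemma \ref{vis-com}: the comparison principle is stated for bounded $D$ but is asserted to extend to half-spaces via \cite{CLV}, so your ``main obstacle'' is in fact already addressed by citation.) You instead reprove that Liouville step from scratch, via the boundary H\"older decay $|v(y)|\le C\|v\|_\infty (y_n/R)^\alpha$ on half-balls and then sending $R\to\infty$. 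This is a legitimate and arguably more transparent alternative: it avoids the black-box dependence on \cite{CLV} and uses only the Krylov--Safonov machinery already summarized in Section \ref{sec-vis} (plus the flat-boundary version, which as you note is classical). The trade-off is length and one extra ingredient---the boundary oscillation estimate is not literally stated in the paper, so you would need to supply it---whereas the paper's version is a one-line citation.
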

\begin{proof}
Suppose that there are two solutions $w_1$ and $w_2$ satisfying \eqref{eq-hyper-cor1}. From the Theorem \ref{thm-vis-diff}, $w_1-w_2 \in \mathcal{S}(\lambda, \Lambda)$. Moreover, since $w_1-w_2$ has zero boundary data on $\partial H(\nu,y_0)$, it should be zero because of the weak maximum principle in \cite{CLV}. 
\end{proof}

Let us introduce a regularity result for the solution of the equation \eqref{eq-hyper-cor1}. 
\begin{lemma} \label{hyper-reg1}
Suppose that $g(y)$ in \eqref{eq-hyper-cor1} is periodic, and in $\mathcal{C}^{2}(\square)$. Then we have 
\begin{equation}
\| w \|_{\mathcal{C}^{1,\alpha}( H(\nu,y_0))} \le C \| g \|_{\mathcal{C}^{2} (\square)}.
\end{equation}
Here $\square$ denotes the unit cell of $\R^n$ and $C$ is a constant depending only on $n$, $\lambda$, and $\Lambda$. 
\end{lemma}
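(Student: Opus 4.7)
The plan is to subtract off the boundary data and then apply standard interior and boundary $\mathcal{C}^{1,\alpha}$ estimates for uniformly elliptic fully nonlinear equations, piecing the local bounds together by a covering argument. Set $v:=w-g$ on $H(\nu,y_0)$, viewing $g$ as a $\mathcal{C}^2$ function on all of $\R^n$ via its periodicity, so that $\|g\|_{\mathcal{C}^2(\R^n)}\le C\|g\|_{\mathcal{C}^2(\square)}$. Then $v$ vanishes on $\partial H(\nu,y_0)$, satisfies $\|v\|_{L^\infty(H)}\le 2\|g\|_{L^\infty}$ by Lemma \ref{hyper-ex}, and in the viscosity sense
\[
\widetilde F(D^2 v,y):=F\bigl(D^2 v+D^2 g(y),y\bigr)=0\text{ in }H(\nu,y_0).
\]
The new operator $\widetilde F$ inherits uniform ellipticity with the same constants $(\lambda,\Lambda)$, and by positive homogeneity and ellipticity $|\widetilde F(0,y)|=|F(D^2 g(y),y)|\le \Lambda\|D^2 g\|_{L^\infty}\le C\|g\|_{\mathcal{C}^2(\square)}$.

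Interior and boundary $\mathcal{C}^{1,\alpha}$ estimates from \cite{CC} then apply to $v$. For any $y\in H(\nu,y_0)$ with $\dist(y,\partial H)\ge 1$, the interior estimate yields
\[
\|v\|_{\mathcal{C}^{1,\alpha}(B_{1/2}(y))}\le C\bigl(\|v\|_{L^\infty(B_1(y))}+\|\widetilde F(0,\cdot)\|_{L^\infty}\bigr)\le C\|g\|_{\mathcal{C}^2(\square)}.
\]
For $y_1\in\partial H(\nu,y_0)$, since $\partial H$ is a hyperplane and the boundary data of $v$ is identically zero, the boundary $\mathcal{C}^{1,\alpha}$ estimate for viscosity solutions (Krylov's theorem, or Silvestre--Sirakov in the viscosity setting) gives
\[
\|v\|_{\mathcal{C}^{1,\alpha}(B_{1/2}(y_1)\cap H)}\le C\bigl(\|v\|_{L^\infty(B_1(y_1)\cap H)}+\|\widetilde F(0,\cdot)\|_{L^\infty}\bigr)\le C\|g\|_{\mathcal{C}^2(\square)}.
\]
Both constants depend only on $n,\lambda,\Lambda$ and are independent of the reference point by translation equivariance. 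A cover of $H(\nu,y_0)$ by such unit balls (interior) and unit half-balls (boundary) patches these into a uniform bound $\|v\|_{\mathcal{C}^{1,\alpha}(H(\nu,y_0))}\le C\|g\|_{\mathcal{C}^2(\square)}$, and combining with $\|g\|_{\mathcal{C}^{1,\alpha}(\R^n)}\le C\|g\|_{\mathcal{C}^2(\square)}$ via $w=v+g$ yields the stated estimate.

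The main technical point is the boundary $\mathcal{C}^{1,\alpha}$ estimate for fully nonlinear uniformly elliptic equations with $y$-dependent (continuous, periodic) coefficients on the unbounded half-space. The flatness of $\partial H$ reduces this to the classical Krylov--Caffarelli--Cabré boundary regularity, but one must verify that the constant does not degrade as the reference point $y_1$ ranges over $\partial H$. This follows because a rigid motion sending $y_1$ to the origin and $\nu$ to $e_n$ leaves the class of uniformly elliptic operators invariant, so the local estimate depends only on $n,\lambda,\Lambda$ and on $\sup_y |\widetilde F(0,y)|$, both of which are already controlled by $\|g\|_{\mathcal{C}^2(\square)}$.
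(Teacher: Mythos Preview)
Your proposal is correct and follows essentially the same route as the paper. The paper's proof is just a one-line pointer: it bounds $|w|\le\|g\|_{L^\infty(\square)}$ via the maximum principle and then says the estimate follows from the arguments of Lemmas~\ref{cor-reg1}--\ref{cor-reg3}, which amount to subtracting the boundary data $g$, invoking the local boundary $\mathcal{C}^{1,\alpha}$ estimate on half-balls (citing \cite{LT} or the odd-extension trick with \cite{CC}), and covering---exactly your strategy, with the simplification that the flat boundary of $H(\nu,y_0)$ makes the domain-straightening step (Lemma~\ref{cor-reg2}) unnecessary.
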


From the maximum principle between $w$ and $\pm\| g\|_{L^\infty(\square)}$, we have $|w| \le \| g\|_{L^\infty(\square)}$. Then the proof of the Lemma above follows the similar argument in the proof of Lemma \ref{cor-reg1}, Lemma \ref{cor-reg2} and Lemma \ref{cor-reg3} in Section \ref{sec-cor}.

\begin{lemma} \label{hyper-reg2}
Let $w^i$, $i=1,2$, be the solutions of the equation \eqref{eq-hyper-cor1} for $y_0 = y_i \in \R^n$ respectively. Suppose that $g$ is a periodic $\mathcal{C}^2$ function. Then we have
\begin{equation}
| w^1(y) - w^2(y) | \le  C \| g \|_{\mathcal{C}^2(\square)} |y_1 -y_2| 
\end{equation}
for all $y \in H(\nu,y_1) \cap H(\nu,y_2)$ where $C$ is a constant depending only on $n$, $\lambda$, and $\Lambda$. 
\end{lemma}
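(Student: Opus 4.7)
The plan is to reduce to a one-sided comparison on the smaller of the two half-spaces and then propagate a boundary estimate inward. Without loss of generality assume $\nu\cdot y_1\le\nu\cdot y_2$, so that $H(\nu,y_2)\subset H(\nu,y_1)$ and $H(\nu,y_1)\cap H(\nu,y_2)=H(\nu,y_2)$. In particular $w^1$ is defined and solves the corrector equation on all of $H(\nu,y_2)$, while $w^2$ solves it there with boundary value $g$ on $\partial H(\nu,y_2)$. The strategy is: (i) compare $w^1$ to $g$ on $\partial H(\nu,y_2)$ using regularity of $w^1$ up to $\partial H(\nu,y_1)$; (ii) apply the comparison principle on the unbounded half-space $H(\nu,y_2)$ to the difference $w^1-w^2$.

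For the boundary step, fix $y\in\partial H(\nu,y_2)$ and let $\pi y:=y+(\nu\cdot y_1-\nu\cdot y_2)\nu$. Then $\nu\cdot\pi y=\nu\cdot y_1$, so $\pi y\in\partial H(\nu,y_1)$ and $|y-\pi y|=|\nu\cdot(y_1-y_2)|\le|y_1-y_2|$. By Lemma \ref{hyper-reg1}, $w^1$ is globally Lipschitz on $\overline{H(\nu,y_1)}$ with Lipschitz constant bounded by $C\|g\|_{\mathcal{C}^2(\square)}$, hence
\begin{equation*}
|w^1(y)-w^1(\pi y)|\le C\|g\|_{\mathcal{C}^2(\square)}\,|y_1-y_2|.
\end{equation*}
Since $\pi y\in\partial H(\nu,y_1)$ we have $w^1(\pi y)=g(\pi y)$, and the $\mathcal{C}^1$-bound on $g$ gives $|g(\pi y)-g(y)|\le\|g\|_{\mathcal{C}^1(\square)}|y_1-y_2|$. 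Combining these with $w^2(y)=g(y)$ on $\partial H(\nu,y_2)$,
\begin{equation*}
|w^1(y)-w^2(y)|\le C\|g\|_{\mathcal{C}^2(\square)}\,|y_1-y_2|\quad\text{for all }y\in\partial H(\nu,y_2).
\end{equation*}

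For the interior bound, note that by Theorem \ref{thm-vis-diff} the difference $w^1-w^2$ lies in $\mathcal{S}(\lambda/n,\Lambda)$ on $H(\nu,y_2)$, and by Lemma \ref{hyper-ex} it is globally bounded by $2\|g\|_{L^\infty}$. Consequently the functions $w^1-w^2\mp C\|g\|_{\mathcal{C}^2(\square)}|y_1-y_2|$ are bounded sub/super-solutions of Pucci equations on the half-space with the right sign on $\partial H(\nu,y_2)$. The weak maximum principle for bounded solutions on half-spaces (as invoked in the proof of Lemma \ref{hyper-uniq} via \cite{CLV}) propagates the boundary inequality to all of $H(\nu,y_2)$, yielding the claim.

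The main technical point is the comparison on an unbounded domain, which is handled by the bounded-on-half-space maximum principle from \cite{CLV} combined with the a priori $L^\infty$ bound from Lemma \ref{hyper-ex}; the rest is a routine application of the global $\mathcal{C}^{1,\alpha}$ regularity of Lemma \ref{hyper-reg1} together with a normal projection between the two parallel hyperplanes.
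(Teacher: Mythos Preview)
Your proof is correct and follows essentially the same route as the paper: reduce to the smaller half-space, use the global $\mathcal{C}^{1,\alpha}$ bound from Lemma~\ref{hyper-reg1} together with the Lipschitz bound on $g$ to control $|w^1-w^2|$ on the inner boundary, and then invoke Theorem~\ref{thm-vis-diff} plus the half-space maximum principle of \cite{CLV} to propagate. The only cosmetic difference is that you use the orthogonal projection $\pi y$ between the two parallel hyperplanes (giving displacement $|\nu\cdot(y_1-y_2)|$), whereas the paper translates by the full vector $y_1-y_2$; both yield the same estimate.
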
 
\begin{proof}
Without any loss of generality, we may assume that $y_2 = 0$ and $H(\nu, y_1) \subset H(\nu,y_2)$. Note that for given any $y \in \partial H(\nu,0)$, $y+y_1 \in \partial H(\nu,y_1)$. From Lemma \ref{hyper-reg1}, we have 
\begin{equation}
| w^2(y+y_1) - g(y) | =| w^2(y+y_1) - w^2(y) | =  C \| g \|_{\mathcal{C}^2(\square)} |y_1|
\end{equation}
where $y \in \partial H(\nu,0)$ and $C$ is a constant same as the Lemma \ref{hyper-reg1}. 
Moreover, since $g$ is in $\mathcal{C}^2$, we have
\begin{equation}
| g(y+y_1) - g(y) | \le \| g \|_{\mathcal{C}^2(\square)} |y_1|. 
\end{equation}
Now combining two inequalities above, we have
\begin{equation}
| w^2(y + y_1) -  g(y + y_1) | \le C \| g \|_{\mathcal{C}^2(\square)} |y_1|
\end{equation}
for every $y \in \partial H(\nu,0)$. 
Now from the Theorem \ref{thm-vis-diff} and the maximum principle in \cite{CLV}, we can conclude that 
\begin{equation}
| w^2(y) - w^1(y) | \le \| w^2 - w^1 \|_{L^\infty(\partial H(\nu,y_1))} \le  C \| g \|_{\mathcal{C}^2(\square)} |y_1| 
\end{equation}
for all $y \in H(\nu,y_1)$. 
\end{proof}

For simplicity, we denote $\Pi = \partial H(\nu,y_0)$ and $\Pi(t) = t \nu + \Pi$ for positive real number $t$ until the end of this section.
\begin{lemma} \label{hyper-osc}
Let $w$ solve the equation \eqref{eq-hyper-cor1}. Suppose that $\nu$ in the equation \eqref{eq-hyper-cor1} is irrational. Then we have 
\begin{equation}
\lim_{t \rightarrow \infty} W_t = 0
\end{equation}
where $W_t = \osc_{\Pi(t)} w = \sup_{y_1,y_2 \in \Pi(t)} |w(y_1) - w(y_2)|$.
\end{lemma}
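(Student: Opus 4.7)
The plan is to prove $W_t\to 0$ in three stages: (i) monotonicity of $W_t$; (ii) a quantitative near-invariance of $w$ under certain hyperplane translations, coming from the diophantine almost-periodicity of Lemma \ref{hyper-ap}; and (iii) an interior oscillation-decay argument. First I would establish monotonicity: for $s>t\ge 0$, the restriction $w|_{H(\nu,y_0+t\nu)}$ is a viscosity solution of the same corrector equation with boundary data $w|_{\Pi(t)}$, so comparison (Lemma \ref{vis-com}) against the constants $M(t):=\sup_{\Pi(t)} w$ and $m(t):=\inf_{\Pi(t)} w$ yields $M(s)\le M(t)$ and $m(s)\ge m(t)$, whence $W_s\le W_t$. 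Thus $W_\infty:=\lim_{t\to\infty} W_t\ge 0$ exists, and the task reduces to ruling out $W_\infty>0$.

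The central step is the second. Fix $\delta>0$ and, by Lemma \ref{hyper-ap}, pick $R=R(\delta,\nu)$ and for each $k'\in\BZ^{n-1}$ a point $\hat y(k')\in Q_R(k')$ with $\epsilon:=\hat y(k')-\tilde m$ satisfying $|\epsilon|\le\delta$, where $\tilde m=\tilde m(k')\in\BZ^n$; since $\nu\cdot\hat y(k')=0$, also $|\nu\cdot\epsilon|\le\delta$. Set $v(y):=w(y+\tilde m)$. Using the $\BZ^n$-periodicity of $F$ and $g$, one checks that $v$ solves \eqref{eq-hyper-cor1} on the shifted half-space $H':=\{y:\nu\cdot y\ge \nu\cdot y_0+\nu\cdot\epsilon\}$ with the same boundary datum $g$. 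The boundaries of $H$ and $H'$ are parallel and separated by at most $\delta$ in the $\nu$-direction, so Lemma \ref{hyper-reg2} gives
\[
|v(y)-w(y)|\le C\|g\|_{\cC^2(\square)}\,\delta\qquad \text{on } H\cap H'.
\]
Combining with the Lipschitz estimate $|w(y+\tilde m+\epsilon)-w(y+\tilde m)|\le C\|g\|_{\cC^2(\square)}\delta$ from Lemma \ref{hyper-reg1}, and noting $y+\hat y(k')=y+\tilde m+\epsilon$, one obtains the key almost-periodicity bound
\[
|w(y+\hat y(k'))-w(y)|\le C\|g\|_{\cC^2(\square)}\,\delta,\qquad y\in\Pi(t),\ t\ge\delta.
\]

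In the third step, given $y_1,y_2\in\Pi(t)$, the tangential vector $y_2-y_1$ lies in some cube $Q_R(k')$, so $|y_1+\hat y(k')-y_2|\le C(\nu)R$. For $t$ large compared with $R$ the ball $B_{t/2}(y_1+\hat y(k'))$ sits inside $H$, and iterating the oscillation-decay step of Proposition \ref{prop-vis-osc-holder} (with $f=0$ and $|w|\le\|g\|_{L^\infty}$) yields $|w(y_1+\hat y(k'))-w(y_2)|\le C\|g\|_{L^\infty}(R/t)^\alpha$ for some $\alpha\in(0,1)$ depending only on $n,\lambda,\Lambda$. Hence
\[
W_t\le C\|g\|_{\cC^2(\square)}\,\delta+C\|g\|_{L^\infty}(R/t)^\alpha.
\]
Sending $t\to\infty$ with $R,\delta$ fixed gives $W_\infty\le C\|g\|_{\cC^2(\square)}\delta$, and then $\delta\to 0$ closes the argument.

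The delicate point is the second step: converting the purely \emph{number-theoretic} closeness $|\hat y(k')-\tilde m|\le\delta$ into a \emph{PDE} estimate on $w$. The trick is that, by periodicity of $F$ and $g$, the shifted function $w(\,\cdot+\tilde m)$ is itself a corrector on a slightly perturbed half-space, so the $C\delta$ loss comes entirely from the \emph{small} $\nu$-component of $\hat y(k')-\tilde m$ through Lemma \ref{hyper-reg2}, while the potentially large tangential part of $\tilde m$ is absorbed harmlessly by periodicity. A direct stability bound for $F(\cdot,y)$ under the non-integer shift $\hat y(k')$ would demand a modulus of continuity of $F$ in $y$ that is not actually available, so this reformulation is essential.
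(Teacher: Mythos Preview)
Your proposal is correct and follows essentially the same approach as the paper: use Lemma~\ref{hyper-ap} to produce almost-integer tangential shifts $\hat y(k')$, exploit $\BZ^n$-periodicity of $F$ and $g$ so that $w(\,\cdot+\tilde m)$ solves the corrector equation on a half-space shifted by at most $\delta$ in the $\nu$-direction, invoke Lemma~\ref{hyper-reg2} and Lemma~\ref{hyper-reg1} to get the almost-periodicity bound $|w(y+\hat y(k'))-w(y)|\le C\|g\|_{\cC^2}\delta$, and finish with the iterated interior oscillation decay of Proposition~\ref{prop-vis-osc-holder}. The only cosmetic differences are that you front-load the monotonicity of $t\mapsto W_t$ (which the paper defers to Lemma~\ref{hyper-limit} and does not actually need here, since the quantitative bound already gives $\limsup_{t\to\infty}W_t\le C\delta$), and you apply the shift at $y_1$ to bring it near $y_2$ whereas the paper shifts $y_2$ toward $y_1$.
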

\begin{proof}
We only prove the case when $y_0 = 0$ because the general case can be obtained by the translation. Let $Q^\prime_R(k^\prime)$ and $Q_R(k^\prime)$ be the same as Lemma \ref{hyper-ap}. Fix $\delta > 0$. Since $\nu$ is irrational, we may choose $R>0$ such that each cube $Q_R(k^\prime) \subset \partial H(\nu,0)$ has a point $\hat y=\hat y(k^\prime)$ satisfying 
\begin{enumerate}
\item
$\hat y = (m, h(m))$ for some $m \in \mathbb{Z}^{n-1} \cap Q^\prime_R(k^\prime)$, 
\item
the fractional part $t(m)$ of $h(m)=\frac{\nu_1}{\nu_n} m_1 + \cdots + \frac{\nu_{n-1}}{\nu_n} m_{n-1}$ is less than or equal to $\delta$
\end{enumerate}
from Lemma \ref{hyper-ap}.

From the definition of $W_t$, we can choose points $y_1$ and $y_2$ in $\Pi(t)$ such that 
\begin{equation}
W_t \le \left|w(y_1) - w(y_2) \right| + \delta
\end{equation}
for given any $\delta>0$. 

We denote $y_i = y^\prime_i+t\nu$, $y^\prime_i \in \partial H(\nu,0)$ for $i=1,2$. Without any loss of generality, we can assume that $y^\prime_1 \in Q_R(0)$ and $y^\prime_2 \in Q_R(k_0^\prime)$ for fixed $k^\prime_0$. 

Note that $\hat y = \hat y(k_0^\prime)$ consists of two parts, the integer part $\widetilde m = (m,h(m)-t(m)) \in \mathbb{Z}^n$ and the fractional part $t(m) e_n$, $0 \le t(m) \le \delta$. Note also that $y^\prime_2 -\hat y \in \partial H(\nu,0)$ is contained in the cube $Q_{3R}(0)$.

Let $\widetilde w(y) = w (\widetilde y) = w(y - \widetilde m)$. Then, from the relation $\widetilde y = y-\widetilde m = y - \hat y + t(m) e_n \in H(\nu, y_0)$ and the periodicity of $F$ and $g$, $\hat w$ is a solution of the following equation,
\begin{equation} \label{eq-hyper-cor2} \begin{cases}
F(D^2 \widetilde w,y) = 0 &\text{ in } H(\nu, y_0 - t(m)e_n) \\
\widetilde w(y) = g(y) &\text{ on } \partial H(\nu,y_0 - t(m)e_n).
\end{cases} \end{equation}

So, from Lemma \ref{hyper-reg2}, we have 
\begin{equation}
|\widetilde w(y) - w(y) | \le C\|g\|_{\mathcal{C}^2} \delta
\end{equation}
for all $y \in H(\nu,y_0)$. 
In particular, we have
\begin{equation}
|w(y_2 - \widetilde m) - w(y_2) | \le C\|g\|_{\mathcal{C}^2} \delta.
\end{equation}

Note that the constant $C$ represents a constant that only depends on $n$, $\lambda$ and $\Lambda$. That constant $C$ could change as the equation changes, but the dependences does not change at least in this proof. 

From the Lemma \ref{hyper-reg1}, we have $|w(y_2 - \widetilde m) - w(y_2 - \hat y) | \le C\|g\|_{\mathcal{C}^2} \delta$ and hence we can conclude that 
\begin{equation}
|w(y_2) - w(y_2 - \hat y) | \le C\|g\|_{\mathcal{C}^2} \delta.
\end{equation} 

Since $y_1$ and $y_2 -\hat y$ contained in a cube $Q_{3R}(t) = Q_{3R} + t \nu$, we have 
\begin{equation} \label{eq-hyper-osc2}\begin{aligned}
W_t &\le \left|w(y_1) - w(y_2) \right| + \delta \\
&\le \left|w(y_1) - w(y_2 - \hat y) \right| +  C \|g\|_{\mathcal{C}^2} \delta \\
&\le \osc_{Q_{3R}(t)} w +  C \|g\|_{\mathcal{C}^2} \delta.
\end{aligned} \end{equation} 

Suppose that $2^m (3R) \le t$ for some $m \in \mathbb{Z}$. Then, by applying the first property in the Proposition \ref{prop-vis-osc-holder} $m$ times, we have
\begin{equation} \begin{aligned}
\osc_{Q_{3R}(t)} w &\le \osc_{B_{3R}(te_n)} w \le \gamma^{1} \osc_{B_{2 \times (3R)}(te_n)} w \\
&\le \cdots \le \gamma^m \osc_{B_{2^m \times (3R)}(te_n)} w \\
&\le \gamma^m \osc_{B_{t}(te_n)} w \\
&\le 2 \gamma^m |w|_\infty \\
\end{aligned} \end{equation}
where $\gamma$ is a constant in $(0,1)$ depending only on the dimension.
Hence we have 
\begin{equation} \label{eq-hyper-osc3}
\osc_{Q_{3R}(t)} w \ \le 2 \left( \displaystyle\frac{3R}{t}\right)^{\log_2 \frac{1}{\gamma}} |w|_\infty.
\end{equation}

Now by using \eqref{eq-hyper-osc2} and \eqref{eq-hyper-osc3}, we have the following,
\begin{equation} \label{eq-hyper-cor} \begin{aligned}
0 &\le W_t \le |w(y_1) - w(y_2)| + \delta \\
&\le \osc_{Q_{3R}(t)} w +  C \|g\|_{\mathcal{C}^2} \delta\\
&\le 2 \left( \displaystyle\frac{3R}{t}\right)^{\log_2 \frac{1}{\gamma}} |w|_\infty + C \|g\|_{\mathcal{C}^2} \delta
\end{aligned} \end{equation}
for sufficiently large $t > 0$.

By taking limit infimum and supremum of $W_t$, we get
\begin{equation}
0 \le \liminf_{t \rightarrow \infty} W_t \le \limsup_{t \rightarrow \infty} W_t \le \lim_{t \rightarrow \infty} \left( \displaystyle\frac{3R}{t}\right)^{\log_2 \frac{1}{\gamma}} |w|_\infty + C \|g\|_{\mathcal{C}^2}  \delta = C \|g\|_{\mathcal{C}^2} \delta.
\end{equation}
Since $\delta$ is arbitrary, we get the conclusion. 
\end{proof}

\begin{lemma} \label{hyper-limit}
Let $w(y)$ be the solution of \eqref{eq-hyper-cor2} and $\nu$ in \eqref{eq-hyper-cor1} is irrational. Then, the limit $w(y^\prime +t p)$ exists as $t$ goes to $\infty$ for each $y^\prime \in \Pi$ and $p \in \R^n$ satisfying $p \cdot \nu >0$. Moreover, that limit is independent of the choice of $y^\prime$ and $p$.
\end{lemma}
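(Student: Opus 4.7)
The plan is to reduce the question to the asymptotic behaviour of the extrema of $w$ on the hyperplanes $\Pi(t)$ and combine the oscillation decay from Lemma~\ref{hyper-osc} with a monotonicity obtained from the maximum principle on translated half-spaces. First I would set
\[
M(t) = \sup_{y \in \Pi(t)} w(y), \qquad m(t) = \inf_{y \in \Pi(t)} w(y).
\]
Both quantities are finite by Lemma~\ref{hyper-ex}, and Lemma~\ref{hyper-osc} gives $M(t) - m(t) \to 0$ as $t \to \infty$.

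The first substantive step is to show that $M$ is non-increasing and $m$ is non-decreasing. Fix $T > 0$ and put $v(y) = w(y) - M(T)$. Since $M(T)$ is constant and $F(0,y) = 0$ (by positive homogeneity), $v$ and $w$ have the same Hessian; viewing $v$ as the difference of the two solutions $w$ and $M(T)$ of $F(D^2 u, y) = 0$ in $H(\nu, y_0 + T\nu)$, Theorem~\ref{thm-vis-diff} places $v \in \mathcal{S}(\lambda/n, \Lambda)$. By the definition of $M(T)$, $v \le 0$ on $\Pi(T)$, and $v$ is globally bounded, so the weak maximum principle for Pucci operators on unbounded domains from \cite{CLV}, already invoked in the proof of Lemma~\ref{hyper-uniq}, forces $v \le 0$ throughout $H(\nu, y_0 + T\nu)$. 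Hence $M(t) \le M(T)$ for every $t \ge T$; the analogous argument with $w - m(T)$ gives $m(t) \ge m(T)$.

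Being bounded and monotone, $M(t)$ and $m(t)$ each converge as $t \to \infty$, and the oscillation decay forces the two limits to coincide. Denote the common limit by $L$. For any $y' \in \Pi$ and any $p \in \R^n$ with $p \cdot \nu > 0$, the point $y' + tp$ lies on $\Pi(s_t)$ with $s_t = t(p \cdot \nu) > 0$, so
\[
m(s_t) \le w(y' + tp) \le M(s_t).
\]
As $t \to \infty$ the sandwiching bounds both tend to $L$, proving both the existence of the limit and its independence of $y'$ and $p$.

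The only delicate ingredient is the monotonicity, which rests on a maximum principle on an unbounded half-space. This is permissible here because $v \in \mathcal{S}(\lambda/n, \Lambda)$ and $v$ is bounded, and the relevant Phragm\'en--Lindel\"of-type statement is the same one cited from \cite{CLV} in the proof of Lemma~\ref{hyper-uniq}. Everything else is a direct consequence of Lemma~\ref{hyper-osc} and the comparison principle.
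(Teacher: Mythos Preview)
Your argument is correct and follows essentially the same route as the paper: define $M(t)$ and $m(t)$ on the slices $\Pi(t)$, use the weak maximum principle on the translated half-space (via \cite{CLV}) to get monotonicity, and then invoke Lemma~\ref{hyper-osc} to force the two monotone limits to coincide. The only cosmetic difference is that the paper normalizes $p\cdot\nu=1$ while you carry the factor $s_t=t(p\cdot\nu)$, and the paper phrases the monotonicity as $M_t=\sup_{H(\nu,y_0+t\nu)}w$ directly rather than through $v=w-M(T)\in\mathcal S(\lambda/n,\Lambda)$.
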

\begin{proof}
We assume that $\nu = e_n$ and $y_0=0$ and the result for general case can be obtained by the rotation and translation. We also assume that $p \cdot \nu =1$. Let $M_t = \sup_{\Pi(t)} w$ and let $m_t = \inf_{\Pi(t)} w$. Then $W_t$ is given by $M_t -m_t$ and $m_t \le w(y^\prime +t p) \le M_t$. 

Since $w(y)$ satisfies the following equation,
\begin{equation} \begin{cases}
F(D^2 w,y) = 0 &\text{ in } \R^{n-1} \times \{ y_n > t \}, \\
w= w(y^\prime +te_n) &\text{ on } \partial ( \R^{n-1} \times \{ y_n > t \} ), 
\end{cases} \end{equation} 
$M_t = \sup_{\R^{n-1} \times \{ y_n > t \} } w$ and $m_t = \inf_{\R^{n-1} \times \{ y_n > t \} } w$ from the weak maximum principle on the unbounded domain $\R^{n-1} \times \{ y_n > t \}$ (see \cite{CLV}). It implies that $M_t$ is monotone decreasing and $m_t$ is monotone increasing and hence there exist $\alpha^* = \lim_{t \rightarrow \infty} M_t$ and $\alpha_* = \lim_{t \rightarrow \infty} m_t$. From lemma \ref{hyper-osc}, $\alpha^*$ and $\alpha_*$ have to be the same and that should be equal to the limit of $w(y^\prime +t p)$. 
\end{proof}

%%%%%%%%%%%%%%%%%%%%%%%%%%%%%%%%%%%%%%%%%%%%%%%%%%%%%
\section{Correctors}\label{sec-cor}
%%%%%%%%%%%%%%%%%%%%%%%%%%%%%%%%%%%%%%%%%%%%%%%%%%%%%
In this section, we are going to consider the corrector equation defined as follow,
\begin{equation} \label{eq-cor1} \begin{cases}
F( D^2 w_\e, y) = 0 &\text{ in } H \left( \nu, y_{0,\e} \right) \\
w_\e = g( x_0, y ) &\text{ on } \partial H\left( \nu, y_{0,\e} \right)
\end{cases} \end{equation}
where $ x_0 \in \partial D$, $y_{0,\e} = x_0 / \e$, and $\nu \in S^{n-1}$. 

\begin{definition} \label{def-cor-overg}
Let $w_\e$ be the solution of the equation \eqref{eq-cor1}. Then we denote
\begin{equation} \begin{aligned}
\overline g^*(x_0,\nu) &= \limsup_{\e \rightarrow 0} \limsup_{t \rightarrow \infty} w_\e(y_{0,\e} + t \nu), \\
\overline g_*(x_0,\nu) &= \liminf_{\e \rightarrow 0} \liminf_{t \rightarrow \infty} w_\e(y_{0,\e} + t \nu).
\end{aligned}\end{equation} 

If $\overline g^*$ and $\overline g_*$ are the same, then we define
\begin{equation}
\overline g(x_0,\nu) = \overline g^*(x_0,\nu).
\end{equation}
\end{definition} 

\begin{example} \label{ex-cor-overg}
Choose $x_0 = (t,1) \in \R^2$, $-1 < t < 1$ and $\nu = e_2$. Assume that $g(y_1,y_2) = \cos (\pi y_2)$. 
If we select a subsequence $\e_m = \displaystyle\frac{1}{2m}$, then $y_{0,\e_m} = (2m t, 2m)$ and hence $g(y) = 1$ for all $y\in H \left( \nu, y_{0,\e} \right)$. This implies that $1$ is a solution of the equation \eqref{eq-cor1}. So, we have
\begin{equation}
\lim_{m \rightarrow \infty} w_{\e_m} (y_{0,\e_m} + t \nu) = 1.
\end{equation}
Since $|w_\e| \le 1$, we have $\overline g^*(x_0,\nu)=1$. In this way, we can show that $\overline g_* = -1$ by choosing $\e_m = \displaystyle\frac{1}{2m+1}$. So $\overline g^* \neq \overline g_*$ in this case. 
\end{example}

We always observe the above phenomena when $\nu$ is not irrational because the hyper plain $\partial H(\nu, y_0) / \mathbb{Z}^n $ is not uniformly distributed in $[0,1]^n$. However, $\overline g^*(x_0,\nu) = \overline g_*(x_0,\nu)$ if $\nu$ is irrational. 

Let $F_\e(M,y) = F(M, y+y_{0,\e})$ and $g_\e(x_0,y) = g (x_0, y+y_{0,\e})$. Then $\widetilde{w}_\e(y) = w_\e(y + y_{0,\e})$ is a solution of the following equation,
\begin{equation} \label{eq-cor2} \begin{cases}
F_\e( D^2 w_\e,y) = 0 &\text{ in } H \left( \nu, 0\right) \\
w_\e = g_\e (x_0, y) &\text{ on } \partial H\left( \nu, 0\right).
\end{cases} \end{equation}

Note that the estimate \eqref{eq-hyper-cor} depends only on $n$, $\lambda$, $\Lambda$, and $\|g\|_{\mathcal{C}^2}$. So we have the following uniform oscillation bounds,
\begin{equation} \label{eq-cor-osc}
W_{\e,t} \le \inf_{\delta >0} \left[ 2 \left( \displaystyle\frac{3R(\delta)}{t} \right)^{-\log_2 \gamma} |g|_\infty +  C \|g\|_{\mathcal{C}^2}\delta \right].
\end{equation}
Hence the oscillation $W_{\e,t} = \osc_{\Pi(t)} \widetilde{w}_\e$ goes to zero uniformly on $\e$, and $\alpha_\e =\lim_{t \rightarrow \infty} \widetilde{w}_\e(t \nu) $ is well defined for each $\e$. 
\begin{lemma} \label{cor-lim}
If $\nu$ is a irrational direction, then the limit $\alpha_\e$ in the above are independent on $\e$. In other words, $\overline g(x_0,\nu)$ is well defined and $\alpha_\e = \overline g(x_0,\nu)$ for all $\e>0$. 
\end{lemma}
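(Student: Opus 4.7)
The plan is to show $\alpha_{\e_1} = \alpha_{\e_2}$ for any two $\e_1, \e_2 > 0$ by constructing an approximate lattice symmetry between $\widetilde{w}_{\e_1}$ and $\widetilde{w}_{\e_2}$. Since $F$ and $g$ are $\mathbb{Z}^n$-periodic, translating the half-space $H(\nu, y_{0,\e_2})$ by a vector in $\mathbb{Z}^n$ produces exactly the same boundary-value problem; while one cannot in general match $y_{0,\e_2}$ to $y_{0,\e_1}$ modulo $\mathbb{Z}^n$, the irrationality of $\nu$ allows an integer shift that aligns the two hyperplanes up to a controlled normal displacement (with the tangential error allowed to be large), and the comparison below only sees this small normal displacement.

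Concretely, set $z := y_{0,\e_2} - y_{0,\e_1}$. Since some $\nu_i/\nu_j$ is irrational, Kronecker's theorem gives that the additive subgroup $\{m \cdot \nu : m \in \mathbb{Z}^n\} \subset \mathbb{R}$ is dense. Given $\delta > 0$, I would choose $m \in \mathbb{Z}^n$ with $|z \cdot \nu - m \cdot \nu| \le \delta$, set $\hat y := z - m$ so that $|\hat y \cdot \nu| \le \delta$ (the tangential part $\hat y_\perp := \hat y - (\hat y \cdot \nu)\nu$ may be large), and assume without loss of generality that $\hat y \cdot \nu \ge 0$. Define $v(y) := \widetilde{w}_{\e_2}(y - \hat y)$. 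Using the identity $-\hat y + y_{0,\e_2} = y_{0,\e_1} + m$ together with $\mathbb{Z}^n$-periodicity of $F$ and $g$, one verifies directly that
\begin{equation*}
\begin{cases}
F_{\e_1}(D^2 v, y) = 0 & \text{ in } H(\nu, \hat y), \\
v(y) = g_{\e_1}(y) & \text{ on } \partial H(\nu, \hat y),
\end{cases}
\end{equation*}
so $v$ solves the same corrector equation as $\widetilde{w}_{\e_1}$, but on a half-space whose boundary is shifted by $\hat y \cdot \nu \in [0, \delta]$ along $\nu$.

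To compare $v$ and $\widetilde{w}_{\e_1}$ on $\partial H(\nu, \hat y) \subset H(\nu, 0)$, I would project each $y \in \partial H(\nu,\hat y)$ to $y^* := y - (\hat y \cdot \nu)\nu \in \partial H(\nu, 0)$, where $\widetilde{w}_{\e_1}(y^*) = g_{\e_1}(y^*)$. The global $\mathcal{C}^{1,\alpha}$ estimate from Lemma \ref{hyper-reg1} applied to $\widetilde{w}_{\e_1}$ together with the $\mathcal{C}^2$ regularity of $g_{\e_1}$ give $|\widetilde{w}_{\e_1}(y) - g_{\e_1}(y)| \le C\delta$, hence $|v - \widetilde{w}_{\e_1}| \le C\delta$ on $\partial H(\nu, \hat y)$. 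By Theorem \ref{thm-vis-diff} the difference lies in $\mathcal{S}(\lambda/n, \Lambda)$, and since both functions are bounded by $\|g\|_\infty$, the weak maximum principle on unbounded domains from \cite{CLV} propagates this bound into $H(\nu, \hat y)$. Evaluating at $y = t\nu$, writing $t\nu - \hat y = -\hat y_\perp + (t - \hat y \cdot \nu)\nu$, and applying Lemma \ref{hyper-limit} with base point $-\hat y_\perp \in \partial H(\nu, 0)$ yields $\widetilde{w}_{\e_2}(t\nu - \hat y) \to \alpha_{\e_2}$ while $\widetilde{w}_{\e_1}(t\nu) \to \alpha_{\e_1}$, so $|\alpha_{\e_1} - \alpha_{\e_2}| \le C\delta$; letting $\delta \to 0$ concludes. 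The main obstacle is that the direct stability estimate of Lemma \ref{hyper-reg2} would cost $C|\hat y|$, which is useless since the tangential part of $\hat y$ is essentially uncontrolled; the boundary projection trick above is what reduces the effective cost to $C|\hat y \cdot \nu|$, and it is precisely the irrationality of $\nu$ that makes this quantity arbitrarily small for a suitable $m \in \mathbb{Z}^n$.
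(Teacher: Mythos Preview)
Your proof is correct and follows essentially the same route as the paper: find an integer shift that reduces the mismatch between the two half-spaces to a normal displacement of size at most $\delta$, compare the two correctors on the smaller half-space via the stability estimate, and pass to the limit along the normal direction using Lemma~\ref{hyper-limit}. The only cosmetic difference is that the paper invokes Lemma~\ref{hyper-reg2} directly (with base point $t e_n$, so the cost is $|t e_n|\le\delta$), whereas you redo that lemma's proof inline via boundary projection; your closing remark that Lemma~\ref{hyper-reg2} ``would cost $C|\hat y|$'' overlooks that $H(\nu,\hat y)=H(\nu,(\hat y\cdot\nu)\nu)$, so the lemma already applies with cost $C|\hat y\cdot\nu|\le C\delta$.
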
 
\begin{proof}
Suppose that $\widetilde w_1$ is a solution of the equation \eqref{eq-cor1} when $\e =\e_1$ and $w_2$ is a solution of the equation \eqref{eq-cor2} when $\e=\e_2$. 
By translating $\widetilde w_1$ properly, we may assume that $w_1$ is a solution of the equation,
\begin{equation} \begin{cases}
F_{\e_2}( D^2 w_1,y) = 0 &\text{ in } H \left( \nu, y_{0,\e_1} - y_{0,\e_2} \right) \\
w_1 = g_{\e_2} (x_0,y) &\text{ on } \partial  H \left( \nu, y_{0,\e_1} - y_{0,\e_2} \right).
\end{cases} \end{equation}

Since $\nu$ is irrational, by using the similar argument in the proof of Lemma \ref{hyper-osc}, we can find $z = -\widetilde m + t e_n \in \partial  H \left( \nu, y_{0,\e_1} - y_{0,\e_2} \right)$ where $\widetilde m \in \mathbb Z^n$, $ t \in [0,\delta)$ for given any $\delta >0$ and $e_n$ is the $n$-th member of the standard coordinate basis of $\R^n$.  

Note that if $y - \widetilde m \in H(\nu, y_{0,\e_1} - y_{0,\e_2})$ then $y \in H(\nu, y_{0,\e_1} - y_{0,\e_2} + \widetilde m)$ and $H(\nu, y_{0,\e_1} - y_{0,\e_2} + \widetilde m) = H(\nu, z + \widetilde m) = H(\nu, t e_n)$. Hence $\widehat w(y) = w_1(y-\widetilde m)$. satisfies the following equation,
\begin{equation} \begin{cases}
F_{\e_2}( D^2 \widehat w,y) = 0 &\text{ in } H \left( \nu, t e_n \right) \\
\widehat w = g_{\e_2} (x_0,y) &\text{ on } \partial H \left( \nu, t e_n \right)
\end{cases} \end{equation}

Now apply Lemma \ref{hyper-reg2} to obtain 
\begin{equation}
| \widehat w(y) - w_2(y) | \le C\| g(x_0,\cdot)\|_{\mathcal{C}^2(\square)} \delta
\end{equation}
if $y \in H(\nu,0) \cap H(\nu, t e_n)$ where $C$ is a constant depending only on $n$, $\lambda$ and $\Lambda$. 

Choose $s$ large enough and substitute $y = s \nu$. Then we have
\begin{equation} \label{eq-cor-lim1}
| w_1 ( s \nu - \widetilde m ) - w_2( s \nu ) | \le C\| g(x_0,\cdot)\|_{\mathcal{C}^2(\square)} \delta. 
\end{equation}

Because of Lemma \ref{hyper-limit}, $w_1 ( s \nu - \widetilde m ) \rightarrow \alpha_{\e_1}$ as $s \rightarrow \infty$. So we can have the  following by taking $\lim_{s \rightarrow \infty}$ on both side to the equation \eqref{eq-cor-lim1},
\begin{equation}
| \alpha_{\e_1} - \alpha_{\e_2} | \le  C\| g(x_0,\cdot)\|_{\mathcal{C}^2(\square)} \delta. 
\end{equation}

Since $\delta > 0$ can be chosen arbitrarily small, we get $\alpha_{\e_1} = \alpha_{\e_2}$ and this exactly implies the conclusion.
\end{proof}

If $F$ is the Laplace operator, then we can describe the homogenized operator $\overline g$ by the average of $g$. So, we can recover the result in \cite{LS} for the Laplace operator through our method.
\begin{prop} \label{prop-cor-linear}
Suppose that $F(M,\frac{x}{\e}) =  a_{ij} M_{ij}$ for some constant matrix $(a_{ij})$. Then, $\overline g(x,\nu) = \langle g \rangle(x) = \int_{[0,1]^n} g(x,y) dy$ for all $\nu \in \mathcal{IR}$.  
\end{prop}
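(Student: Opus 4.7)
The plan is to reduce to single Fourier modes by linearity and to handle a general $g \in \mathcal{C}^{2,\alpha}$ via uniform approximation by trigonometric polynomials. Since $F(M,y) = a_{ij} M_{ij}$ is linear and independent of $y$, the comparison principle in Lemma~\ref{vis-com} (which holds on half-spaces by \cite{CLV}) implies that the map $h \mapsto w^h_\e$ sending a continuous periodic boundary datum $h$ to the unique bounded solution of $a_{ij} D_{ij} w = 0$ in $H(\nu, y_{0,\e})$ with $w = h$ on $\partial H(\nu,y_{0,\e})$ is linear and $1$-Lipschitz in the sup norm. Consequently both $\overline g^*(x_0,\nu)$ and $\overline g_*(x_0,\nu)$ depend on $g(x_0,\cdot)$ in a $1$-Lipschitz fashion with respect to $\|\cdot\|_{L^\infty(\mathbb T^n)}$, as does $\langle g \rangle(x_0)$. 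Since trigonometric polynomials are dense in $C(\mathbb T^n)$, it suffices to establish $\overline g(x_0,\nu) = \langle g \rangle(x_0)$ when $g(x_0,\cdot) = \sum_{|k| \le N} c_k\, e^{2\pi i k \cdot y}$ is a trigonometric polynomial.

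For such a datum, linearity yields $w_\e = \sum_{|k| \le N} c_k w_k$, where $w_k$ is the bounded corrector with boundary data $e^{2\pi i k \cdot y}$. Write $k = k' + k_\nu \nu$ with $k' \in \nu^\perp$ and $k_\nu = k \cdot \nu$, and set $s = \nu \cdot y$, $s_0 = \nu \cdot y_{0,\e}$, $y' = y - s\nu$. I look for $w_k$ in the explicit form
\begin{equation*}
w_k(y) \;=\; e^{2\pi i k_\nu s_0}\; e^{\,2\pi i k' \cdot y' + \beta_k (s - s_0)},
\end{equation*}
which agrees with $e^{2\pi i k \cdot y}$ at $s = s_0$. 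Substituting into $a_{ij} D_{ij} w_k = 0$ collapses the PDE to a single quadratic equation for $\beta_k$,
\begin{equation*}
(a_{ij}\nu_i\nu_j)\,\beta_k^2 + 4\pi i (a_{ij}\nu_i k'_j)\,\beta_k - 4\pi^2 (a_{ij} k'_i k'_j) = 0.
\end{equation*}
Cauchy--Schwarz for the positive-definite symmetric form $a$ gives $(a_{ij}\nu_i k'_j)^2 \le (a_{ij}\nu_i\nu_j)(a_{ij} k'_i k'_j)$ with equality only if $k'$ is a scalar multiple of $\nu$; but $k' \perp \nu$, so equality forces $k' = 0$. Hence for $k' \neq 0$ the discriminant is strictly positive and the two roots have opposite nonzero real parts. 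Choosing $\beta_k$ with $\mathrm{Re}(\beta_k) < 0$ makes $w_k$ bounded on $H(\nu, y_{0,\e})$ and decaying like $e^{\mathrm{Re}(\beta_k)(s-s_0)}$ as $s \to \infty$. Taking real and imaginary parts produces bounded classical (hence viscosity) solutions, and the uniqueness in Lemma~\ref{hyper-uniq} identifies them with the genuine correctors.

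The hypothesis $\nu \in \mathcal{IR}$ enters at exactly one point: for $\nu$ irrational and $k \in \mathbb Z^n$, proportionality $k = c\nu$ forces $c = 0$ (Definition~\ref{def-hyper-irr}), so $k' = k - k_\nu \nu = 0$ happens only when $k = 0$. Thus for every $k \neq 0$ one has $\mathrm{Re}(\beta_k) < 0$, so $w_k(y_{0,\e} + t\nu) \to 0$ as $t \to \infty$ uniformly in $\e$ (the decay rate depends only on $k$, $\nu$ and $a$), while $w_0 \equiv 1$ contributes $c_0 = \langle g \rangle(x_0)$. Summing the finitely many modes yields $\overline g(x_0,\nu) = \langle g \rangle(x_0)$ for trigonometric polynomials, and the Lipschitz continuity of the first paragraph extends the identity to all $g \in \mathcal{C}^{2,\alpha}$. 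The only minor subtlety is checking that the root-selection plus real/imaginary decomposition yields a genuine (real) viscosity solution, which follows from uniqueness (Lemma~\ref{hyper-uniq}) combined with the classical smoothness of the explicit exponential ansatz; beyond that the argument is routine linear algebra powered by the positive-definiteness of $(a_{ij})$ and the elementary fact that an irrational direction admits no nonzero integer multiples.
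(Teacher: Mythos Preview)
Your argument is correct, but it follows a genuinely different route from the paper's. The paper never writes down an explicit solution: instead it exploits linearity in a spatial-averaging way, forming the Riemann sums $G_R^\delta(y)=R^{1-n}\sum_i g_\e(x_0,y^i)\delta^{n-1}$ over cubes $Q_R(y)\subset\partial H(\nu,0)$ and the corresponding averaged corrector $W_R^\delta$, then using equidistribution of the irrational hyperplane (Lemma~\ref{hyper-ap}) to show $G_R(y)\to\langle g\rangle(x_0)$ uniformly and hence $\alpha_\e=\langle g\rangle(x_0)$. Your proof, by contrast, Fourier-decomposes the boundary data and constructs the exponential solution $e^{2\pi i k'\cdot y'+\beta_k(s-s_0)}$ for each mode, reading off the decay from the sign of $\mathrm{Re}\,\beta_k$ via the Cauchy--Schwarz inequality for the $a$-inner product; irrationality of $\nu$ enters only through the observation that no nonzero $k\in\mathbb Z^n$ is parallel to $\nu$, so $k'\neq0$ whenever $k\neq0$.

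Your approach is more elementary and yields explicit exponential decay rates, which is pleasant. The paper's averaging argument, on the other hand, stays within the machinery already built in Section~\ref{sec-hyper} and, more importantly, extends without change to the situation of Remark~\ref{rmk-cor-linear}, where $a_{ij}=a_{ij}(z_1)$ is allowed to depend on part of the variables while $g=g(z_2)$ depends on the complementary ones; your separation-of-variables ansatz would not survive that generalization so cleanly.
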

\begin{proof}
Fix $x_0 \in \partial D$ and $\e >0$. Assume that $g_\e(x_0,y)$ and $H(\nu,0)$ are defined the same in \eqref{eq-cor2} and $w_\e(y)$ is the solution of the equation \eqref{eq-cor2}.

Let $Q_R(y)$ be a cube in $\partial H(\nu,0)$ centered at $y \in \partial H(\nu,0)$ with side length $R$. Suppose that $Q_R(y)$ is divided into disjoint cubes $\{ Q^i \}$ with side length $\delta>0$ and $y^i  \in Q_R(y)$ are centers of $ Q^i$. Let us define 
\begin{equation} \label{eq-cor-linear1}
G_R^\delta(y) = \displaystyle\frac{1}{R^{n-1}} \sum_{i} g_\e(x_0,y^i) \delta^{n-1} \quad\text{  and  }\quad G_R(y) = \fint_{Q_R(y)} g_\e(x_0,y) d\sigma_y.
\end{equation}
Since $g_\e(x_0,\cdot)$ is uniformly continuous on the $y$-variable, $G_R^\delta(y)$ converges to $G_R(y)$ uniformly on $y \in \partial H(\nu,0)$ as $\delta \rightarrow 0$. 

Since $F$ is independent on the $y$ variable and linear, $W_R^\delta(y + t\nu) = \displaystyle\frac{1}{R^{n-1}} \sum_i w_\e(y^i + t\nu) \delta^{n-1}$ is also a solution of the equation \eqref{eq-cor2} with boundary condition $G_R^\delta(y)$ where $y \in \partial H(\nu,0)$ and $t >0$. Moreover, since $G^\delta_R(y)$ converges to $G_R(y)$ uniformly, $W_R^\delta$ converges to $W_R(y)$, the solution of \eqref{eq-cor2} when the boundary condition is $G_R(y)$, uniformly and satisfying
\begin{equation} \label{eq-cor-linear2}
\| W_R^\delta -W_R \|_{L^\infty(H(\nu,0))}  \le \|G_R^\delta - G_R \|_{L^\infty(\partial H(\nu,0))} 
\end{equation}
because of the maximum principle in \cite{CLV}.

Since $\nu$ is irrational, from Lemma \ref{hyper-limit}, $\alpha_\e = \lim_{t \rightarrow \infty} w_\e(y+t\nu)$ is well defined and independent of the choice of $y \in \partial H(\nu,0)$. Moreover, $W_R^\delta(x + t\nu)$ is just a finite sum of $w_\e(y^i + t\nu)$, $\alpha_\e = \lim_{t \rightarrow \infty}  W_R^\delta(x + t\nu) $. 

Finally, from \eqref{eq-cor-linear2} and the uniform convergence of $G^\delta_R$, we have
\begin{equation} \begin{aligned}
|\alpha_\e - &\lim_{t \rightarrow \infty}  W_R(y + t\nu)| = \lim_{t \rightarrow \infty} | \alpha_\e -W_R(y + t\nu)| \\
&\le \lim_{t \rightarrow \infty}\left( | \alpha_\e -W^\delta_R(y + t\nu)|  + |W^\delta_R(y + t\nu) - W_R(y + t\nu) | \right) \\
&\le \lim_{t \rightarrow \infty} | \alpha_\e -W^\delta_R(y + t\nu)|  + |G^\delta_R(y) - G_R(y) |_\infty \\
&\le |G^\delta_R(y) - G_R(y) |_\infty \\
&\rightarrow 0
\end{aligned} \end{equation}
as $\delta \rightarrow 0$ and hence
\begin{equation}
\alpha_\e = \lim_{t \rightarrow \infty}  W_R(y + t\nu).
\end{equation}

Assume that $G_R(y)$ converges to $\langle g\rangle$ uniformly. Then by using similar argument in the above, we can show that $\alpha_\e = \lim_{t \rightarrow \infty}  W (y + t\nu) $ where $W$ is the solution of \eqref{eq-cor2} when the boundary condition is a constant function $\langle g \rangle(x_0)$ and since the boundary data is a constant, $W(y)$ should be equal to $\langle g\rangle(x_0)$ in $H(\nu,0)$. Hence we have $\alpha_\e = \langle g \rangle(x_0)$ for all $\e>0$ and $\overline g(x_0,\nu) = \langle g \rangle (x_0)$. 

Now we will prove the uniform convergence of $G_R(y)$. For the simplicity, we assume $|g|_\infty + |\nabla g|_\infty \le 1$. For fixed $\delta >0$, we can choose $R_0>0$ such that the plane $\partial H(\nu,0)$ is represented as a union of disjoint cubes $Q_{R_0}(k^\prime)$ defined same in Lemma \ref{hyper-ap} and each cube has a point $\hat y(k^\prime)$ such that $|g(y + \hat y(k^\prime)) -g(y)| \le \delta$ for all $y \in \partial H(\nu,0)$. 

Suppose that $y_1$ and $y_2$ are in $\partial H(\nu,0)$ and $|y_1-y_2|\le 3nR_0 \le R$. Then, 
\begin{equation} \label{eq-cor-linear3} \begin{aligned}
|G_R(y_1) - G_R(y_2)| &\le \left| \fint_{Q_R(y_1)} g(y) d\sigma - \fint_{Q_R(y_2)} g(y) d\sigma \right| \\
&\le \displaystyle\frac{2|Q_R(y_1) \setminus Q_R(y_2)|_{n-1}}{R^{n-1}} \\
&\le \displaystyle\frac{c(n) R^{n-2} R_0}{R^{n-1}} \le \displaystyle\frac{c(n)R_0}{R} \\
\end{aligned} \end{equation}
where $|\cdot|_{n-1}$ is a measure on $\partial H(\nu,0)$ reduced by the Lebesgue measure in $\R^n$ and $c(n)$ is a constant depending only on the dimension $n$.

Choose $y_1$, $y_2$ in $\partial H(\nu,0)$. Without any loss of generality, we may assume that $y_1 \in Q_{R_0}(0)$ and $y_2 \in Q_{R_0}(k^\prime)$ for some $k^\prime \in \mathbb{Z}^{n-1}$. From the choice of $R_0$, we can find $\hat y \in Q_{R_0}(k^\prime)$ such that $|g(y+ \hat y) - g(y)|\le \delta$ and, in particular, $|g(y+y_1+ \hat y) - g(y+y_1)|\le \delta$ for all $y \in \partial H(\nu,0)$. 
Moreover, since $y_1 \in Q_{R_0}(0)$ and $\hat y \in Q_{R_0}(k^\prime)$, we have
\begin{equation}
|y_1+\hat y - y_2 | \le 3n R_0.
\end{equation}

Combining those, we have the following,
\begin{equation} \begin{aligned}
|G_R(y_1&) - G_R(y_2)| \le |G_R(y_1) - G_R(y_1 + \hat y)| + | G_R(y_1 +\hat y) - G_R(y_2) | \\
&\le \left| \fint_{Q_R(y_1)} g(y) d\sigma - \fint_{Q_R(y_1+\hat y)} g(y) d\sigma \right| + | G_R(y_1 +\hat y) - G_R(y_2) | \\
&\le\fint_{Q_R(y_1)} \left|  g(y) - g(y + \hat y) \right| d\sigma  + \frac{c(n)R_0}{R} \\
&\le \delta +\frac{c(n)R_0}{R}
\end{aligned} \end{equation}

Hence, if $R$ is large enough, then $|G_R(y_1) - G_R(y_2)| \le 2 \delta $. Finally, since $G_R(0)$ converges to $\langle g \rangle(x_0)$, we can conclude that
\begin{equation}
|G_R(y) - \langle g \rangle(x_0) | \le |G_R(y) - G_R(0) | + |G_R(0) - \langle g \rangle(x_0) | \le 3 \delta
\end{equation}
when $R >0$ is sufficiently large and since the choice of $R$ is depending only on $R_0$ and the convergence speed of $G_R(0)$, $G_R(y)$ converges to $\langle g \rangle(x_0)$ uniformly on $y \in \partial H(\nu,0)$. 
\end{proof}

\begin{remark} \label{rmk-cor-linear}
The Proposition \ref{prop-cor-linear} hold if $y = (z_1, z_2)$, $z_1 \in \R^m$ and $z_2 \in \R^{n-m}$, $a_{ij}$ is a function on the variable $z_1$ and $g$ is a function on the $z_2$ variable. The main idea is similar to the proof of Proposition \ref{prop-cor-linear}. The only differencs is that we integrate the solution along $z_2$ axis. 
\end{remark}

We omit $\nu$ of $\overline g(x_0, \nu)$ if $x_0 \in \partial D$ and $\nu$ is the same as the inward normal vector $-\nu(x_0)$ of $D$. In other words, $\overline g(x_0) = \overline g(x_0,-\nu(x_0))$. As we proved, if $\nu$ is irrational, then $\overline g(x_0)$ is well defined. 

\begin{definition} \label{def-cor-iddc}
The domain $D$ satisfies the irrational direction dense condition(IDDC) if the size of the set $\{ x \in \partial D | \nu(x) \in \mathcal R \}$ is countable.
\end{definition}

As we told in the introduction, there are lots of examples satisfying the IDDC. $\mathcal{C}^2$ domains whose boundary has nonzero Gaussian curvature, balls as a example, satisfies IDDC.

%%%%%%%%%%%%%%%%%%%%%%%%%%%%%%%%%%%%%%%%%%%%%%%%%%%%%%
%\section{Error estimate between $u_\e$ and $w_\e$}\label{sec-rel}
%%%%%%%%%%%%%%%%%%%%%%%%%%%%%%%%%%%%%%%%%%%%%%%%%%%%%%

Now we are going to show that $u_\e$ gets closer to $w_\e$ after blow-up near the $\mathcal C^2$-boundary $\partial D$. Note that the scaled function $U_\e(y) = u_\e(\e y)$ satisfies the following equation
\begin{equation} \label{eq-cor-approx} \begin{cases}
F(D^2 U_\e,y) = \e^2 f(\e y,y) &\text{ in } \e^{-1} D \\
U_\e(y) = g(\e y, y) &\text{ on } \partial  \left( \e^{-1} D \right) \\
\end{cases} \end{equation}
from \eqref{con-phomo}.

We will show that the uniform boundedness of $\| U_\e (\cdot) \|_{\mathcal{C}^{1,\alpha}(\e^{-1} \overline  D)}$. First, let us show the uniform boundedness of $\| U_\e(\cdot)\|_{\mathcal C^{1,\alpha}(\e^{-1}D)}$ by summarizing known results.
\begin{lemma} \label{cor-reg1}
Suppose that $v$ is a viscosity solution of 
\begin{equation} \begin{cases}
F(D^2 v ,x) = f(x) &\text{ in } B^+_1(0) \\
v=g &\text{ on } \partial B^+_1(0)
\end{cases} \end{equation}
where $B^+_1(0)$ is a half ball and $g \in \mathcal{C}^2(\overline B^+_1(0))$. 
Then, we have
\begin{equation}
\| v \|_{\mathcal{C}^{1,\alpha}(B^+_{1/2}(0))} \le C\left(\|f\|_{L^\infty(B^+_1(0))} + \|g\|_{\mathcal{C}^2(B^+_1(0))} \right)
\end{equation}
where $C$ depends only on $n$, $\lambda$, and $\Lambda$.
\end{lemma}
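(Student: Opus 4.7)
The plan is to reduce to the zero boundary data case and then patch interior and flat-boundary $\mathcal{C}^{1,\alpha}$ estimates.

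First, I would remove the boundary data. Let $\tilde g$ be a $\mathcal{C}^2$-extension of $g$ to $\overline{B_1^+(0)}$ with $\|\tilde g\|_{\mathcal{C}^2(\overline{B_1^+(0)})} \le C_n \|g\|_{\mathcal{C}^2(\overline{B_1^+(0)})}$, and set $w := v - \tilde g$. Using the uniform ellipticity \eqref{con-F-unif}, $w$ is a viscosity solution of
\[
\tilde F(D^2 w, x) = f(x) - F(D^2 \tilde g(x), x) =: \tilde f(x) \quad \text{in } B_1^+(0),
\]
with $w = 0$ on $\partial B_1^+(0)$, where $\tilde F(M,x) := F(M + D^2 \tilde g(x), x)$ is uniformly elliptic with the same constants $\lambda, \Lambda$ and $\|\tilde f\|_\infty \le \|f\|_\infty + C_n \Lambda \|g\|_{\mathcal{C}^2}$. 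A paraboloid barrier together with Lemma \ref{vis-com} yields $\|w\|_{L^\infty(B_1^+)} \le C(\|f\|_\infty + \|g\|_{\mathcal{C}^2})$.

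Second, I would bound $w$ in $\mathcal{C}^{1,\alpha}(\overline{B_{1/2}^+(0)})$ by patching two families of local estimates. At an interior point $x_0$ with $\dist(x_0, \partial B_1^+) \ge \rho$, the interior $\mathcal{C}^{1,\alpha}$ estimate of \cite{CC} gives $\|w\|_{\mathcal{C}^{1,\alpha}(B_{\rho/2}(x_0))} \le C_\rho(\|w\|_\infty + \|\tilde f\|_\infty)$. At a point $x_0$ on the flat face $\{x_n = 0\} \cap B_{3/4}$, since $w$ vanishes on a flat piece of the boundary, Krylov's boundary $\mathcal{C}^{1,\alpha}$ estimate for viscosity solutions of fully nonlinear uniformly elliptic equations (Chapter 9 of \cite{CC}) provides a uniform bound in a half-ball neighborhood of $x_0$. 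Since $\overline{B_{1/2}^+(0)}$ keeps a positive distance from the curved portion of $\partial B_1^+$, a finite covering by such interior and flat-boundary neighborhoods produces
\[
\|w\|_{\mathcal{C}^{1,\alpha}(\overline{B_{1/2}^+(0)})} \le C(\|f\|_\infty + \|g\|_{\mathcal{C}^2}).
\]
Adding back $\tilde g$, whose $\mathcal{C}^{1,\alpha}$ norm is controlled by $\|g\|_{\mathcal{C}^2}$, yields the stated estimate for $v$.

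The main obstacle is the flat-boundary $\mathcal{C}^{1,\alpha}$ estimate, which is not included in the interior theory recalled in Section \ref{sec-vis}. This estimate is however standard: since $w$ vanishes on $\{x_n=0\}$, Theorem \ref{thm-vis-diff} together with the Pucci envelopes $\cM^-(D^2 w) \le \tilde f \le \cM^+(D^2 w)$ and the boundary weak Harnack inequality give Hölder regularity up to the flat boundary, and the iteration/approximation scheme of Caffarelli in \cite{CC} then upgrades this to $\mathcal{C}^{1,\alpha}$, the flatness of the boundary guaranteeing that affine tangent functions are admissible comparison functions. The explicit $x$-dependence of $\tilde F$ is harmless, since all estimates depend only on the ellipticity constants.
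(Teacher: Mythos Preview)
Your argument is correct. Two small remarks: since the hypothesis already gives $g \in \mathcal{C}^2(\overline{B_1^+(0)})$, no extension is needed and you may simply take $\tilde g = g$; and with your definition $\tilde F(M,x) = F(M + D^2\tilde g(x),x)$ one actually gets $\tilde F(D^2 w,x) = f(x)$, not $f - F(D^2\tilde g,x)$, though this has no effect on the estimates.

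The paper does not prove this lemma but only points to two options: citing \cite{LT}, or performing an odd extension of $\tilde v = v - g$ across $\{x_n=0\}$ and then invoking the \emph{interior} $\mathcal{C}^{1,\alpha}$ estimate of \cite{CC} on the full ball $B_1(0)$. Your route is genuinely different: rather than reflecting, you invoke the flat-boundary Krylov $\mathcal{C}^{1,\alpha}$ estimate directly and patch it with interior balls. For a general nonlinear $F$ the odd extension does not solve the same equation in the lower half (one obtains instead $G(M,x) = -F(-RMR, Rx)$ with $R$ the reflection), so the reflected function is only in the Pucci class $\mathcal{S}(\lambda,\Lambda)$, for which interior $\mathcal{C}^{1,\alpha}$ is not automatic; making the paper's sketch rigorous therefore requires essentially the same boundary-Harnack/Krylov input that you use explicitly. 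In that sense your argument is the more transparent of the two, while the paper's suggestion is a convenient mnemonic that is fully justified only when $F$ has the appropriate reflection symmetry.
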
 

One can find the above boundary estimate of viscosity solution at \cite{LT} or it can be proved directly by the odd extension of the function $\widetilde v = v - g$ on $B_1(0)$ and the interior estimate of the viscosity solution in \cite{CC}. 

Suppose that $\partial D$ is in $\mathcal{C}^2$. Then, by using the domain straitening map $\Phi$ in the appendix of \cite{E}, we have the following,
\begin{lemma} \label{cor-reg2}
Let $v$ be a viscosity solution of the following equation. 
\begin{equation} \begin{cases}
F(D^2 v ,x) = f(x) &\text{ in } D\\
v(x) = g(x) &\text{ on } \partial D.
\end{cases} \end{equation}
Suppose that the $\mathcal{C}^2$ norm of $\partial D$ is sufficiently small. 

Then there is a constant $0<r_0<1$ and for $x\in \partial D$ we have
\begin{equation}
\| v \|_{\mathcal{C}^{1,\alpha}(B_{r_0}(x)) \cap \overline D)} \le C\left(\|f\|_{L^\infty(B_1(x) \cap D)} + \|g\|_{\mathcal{C}^2(B_1(x) \cap \partial D)} + \|v\|_{L^\infty(\overline B_1(x) \cap D)} \right).
\end{equation}
Moreover, the constants $r_0$ and $C$ only depends on $n$, $\lambda$, $\Lambda$ and the $\mathcal{C}^2$ norm of $\partial D$. 
\end{lemma}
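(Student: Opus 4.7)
The plan is to use the domain straightening map $\Phi$ from the appendix of \cite{E} to reduce the general $\mathcal{C}^2$ boundary situation to the flat-boundary case already handled in Lemma \ref{cor-reg1}. Fix $x_0 \in \partial D$. Since $\partial D$ is $\mathcal{C}^2$ with sufficiently small norm, I would produce a $\mathcal{C}^2$ diffeomorphism $\Phi$ from a neighborhood of $x_0$ in $\overline D$ onto a neighborhood of $0$ in the closed half-space, sending $\partial D$ onto $\{y_n=0\}$ and $D$ onto $\{y_n>0\}$. Choosing scales so that $\Phi$ maps $B_{r_1}(x_0)\cap\overline D$ onto (a set containing) $B_1^+(0)$ and $B_{r_0}(x_0)\cap\overline D$ into $B_{1/2}^+(0)$, with $r_0, r_1$ depending only on $\|\partial D\|_{\mathcal{C}^2}$, sets up the reduction.

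Next I would transform the PDE. Setting $\widetilde v = v\circ\Phi^{-1}$, $\widetilde g = g\circ\Phi^{-1}$, $\widetilde f = f\circ\Phi^{-1}$, the chain rule gives
\begin{equation*}
D^2 v\circ\Phi^{-1} \;=\; (D\Phi)^T \, D^2\widetilde v \, (D\Phi) \;+\; \sum_{k=1}^n \partial_k\widetilde v\cdot D^2\Phi^k,
\end{equation*}
so that $\widetilde v$ solves a new fully nonlinear equation
\begin{equation*}
\widetilde F(D^2\widetilde v, D\widetilde v, y) = \widetilde f(y) \text{ in } B_1^+(0), \qquad \widetilde v = \widetilde g \text{ on } \{y_n=0\}\cap B_1(0).
\end{equation*}
The operator $\widetilde F$ is uniformly elliptic with constants determined by the eigenvalue bounds of $(D\Phi)^T(D\Phi)$, and the first-order term has coefficient controlled by $\|\Phi\|_{\mathcal{C}^2}$. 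When $\|\partial D\|_{\mathcal{C}^2}$ is small, $D\Phi$ is close to the identity, so $\widetilde F$ lies in a uniformly elliptic class with constants depending only on $n,\lambda,\Lambda$, and $\|\widetilde g\|_{\mathcal{C}^2}$ is comparable to $\|g\|_{\mathcal{C}^2}$.

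Then I would apply the flat-boundary estimate to $\widetilde v$, in the spirit of Lemma \ref{cor-reg1}, but in its localized version where the boundary data is prescribed only on the flat portion $\{y_n=0\}\cap B_1(0)$ and the values of $\widetilde v$ on the interior curved portion $\partial B_1^+(0)\setminus\{y_n=0\}$ are controlled by the $L^\infty$ norm of $\widetilde v$. This localized estimate is classical: it follows either from the boundary $\mathcal{C}^{1,\alpha}$ theory of \cite{LT}, or from odd reflection of $\widetilde v-\widetilde g$ across $\{y_n=0\}$ combined with the interior estimates of \cite{CC}. The result is
\begin{equation*}
\|\widetilde v\|_{\mathcal{C}^{1,\alpha}(B_{1/2}^+(0))} \;\leq\; C\bigl(\|\widetilde f\|_{L^\infty(B_1^+)} + \|\widetilde g\|_{\mathcal{C}^2(B_1^+)} + \|\widetilde v\|_{L^\infty(B_1^+)}\bigr),
\end{equation*}
and pulling back through $\Phi^{-1}$, which is $\mathcal{C}^2$ with bounded derivatives, transfers the $\mathcal{C}^{1,\alpha}$ bound to $v$ on $B_{r_0}(x_0)\cap\overline D$ with constants depending additionally on $\|\partial D\|_{\mathcal{C}^2}$.

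The main obstacle will be tracking how the change of variables affects the ellipticity constants of the operator and the $\mathcal{C}^2$ norm of the boundary data; this is the precise reason the smallness hypothesis on $\|\partial D\|_{\mathcal{C}^2}$ is needed, as it keeps $\widetilde F$ inside a fixed uniformly elliptic class so that the universal constants supplied by the flat-boundary estimate do not degenerate. Without such a smallness assumption, one would need a more delicate covering and perturbation argument, or at a minimum would have to allow the constant $C$ to depend quantitatively on the full $\mathcal{C}^2$ geometry of $\partial D$.
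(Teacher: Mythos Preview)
Your proposal is correct and follows exactly the route the paper indicates: the paper's entire argument for this lemma is the remark that one uses the domain straightening map $\Phi$ from the appendix of \cite{E} to reduce to the flat-boundary situation of Lemma~\ref{cor-reg1}, and that knowing $\Phi$ explicitly lets one compute $r_0$. You have simply written out the details of that reduction (the chain-rule computation, the appearance of the $\|v\|_{L^\infty}$ term from the curved portion of $\partial B_1^+$, and the role of the smallness hypothesis in controlling the new ellipticity constants), all of which the paper leaves implicit.
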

Since we exactly know the formula $\Phi$, we can find such a $r_0$ by the calculation. 

From the above lemma, the interior estimate of the viscosity solution in \cite{CC} and the proper covering of the domain, we have
\begin{lemma} \label{cor-reg3}
Suppose that $v$ is solution of 
\begin{equation} \begin{cases}
F(D^2 v ,x) = f(x) &\text{ in } D\\
v(x)=g(x) &\text{ on } \partial D,
\end{cases} \end{equation}
and the $\mathcal{C}^2$ norm of $\partial D$ is sufficiently small. 
Then, $v \in \mathcal{C}^{1.\alpha}(\overline D)$ and we have
\begin{equation}
\| v \|_{\mathcal{C}^{1,\alpha}(\overline D)} \le C\left(\|f\|_{L^\infty(D)} + \|g\|_{\mathcal{C}^2(D)} + \|v\|_{L^\infty(D)} \right)
\end{equation}
where $C$ depends only on $n$, $\lambda$, $\Lambda$ and the $\mathcal{C}^2$ norm of $\partial D$. 
\end{lemma}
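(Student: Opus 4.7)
The plan is to combine the boundary regularity result from Lemma \ref{cor-reg2} with the interior $\mathcal{C}^{1,\alpha}$ estimate for viscosity solutions from \cite{CC}, and patch them together via a finite cover of the compact set $\overline D$.

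First I would handle the boundary layer. For each $x \in \partial D$ apply Lemma \ref{cor-reg2} on $B_{r_0}(x) \cap \overline D$, obtaining a uniform estimate
\begin{equation*}
\| v \|_{\mathcal{C}^{1,\alpha}(B_{r_0}(x) \cap \overline D)} \le C\bigl(\|f\|_{L^\infty(D)} + \|g\|_{\mathcal{C}^2(\partial D)} + \|v\|_{L^\infty(D)}\bigr),
\end{equation*}
with $C$ depending only on $n$, $\lambda$, $\Lambda$, and the $\mathcal{C}^2$ norm of $\partial D$. The key point is that $r_0$ and $C$ are uniform in $x \in \partial D$, so this yields a $\mathcal{C}^{1,\alpha}$ bound on the tubular neighborhood $D_{r_0/2} := \{x \in \overline D : \dist(x,\partial D) < r_0/2\}$ with the constant on the right-hand side.

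Next I would handle the interior. On the compact set $K := \{x \in \overline D : \dist(x,\partial D) \ge r_0/4\}$, every point admits a ball $B_{r_0/4}(x) \subset D$ on which the interior $\mathcal{C}^{1,\alpha}$ estimate of Caffarelli--Cabr\'e applies (after rescaling, using the positive homogeneity \eqref{con-phomo} so that $F(\cdot, y)$ remains uniformly elliptic with the same constants). This gives
\begin{equation*}
\| v \|_{\mathcal{C}^{1,\alpha}(B_{r_0/8}(x))} \le C\bigl(\|f\|_{L^\infty(D)} + \|v\|_{L^\infty(D)}\bigr)
\end{equation*}
with $C$ depending only on $n$, $\lambda$, $\Lambda$, and $r_0$. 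By compactness of $K$ we extract a finite subcover by balls $B_{r_0/8}(x_i)$, and together with the boundary balls $B_{r_0}(x)$ for $x \in \partial D$ we obtain a finite cover of $\overline D$ on which the desired bound holds locally.

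Finally I would patch the local estimates into a global one. The $L^\infty$ bounds on $v$ and $Dv$ on each patch combine trivially. For the global $[Dv]_{\mathcal{C}^\alpha(\overline D)}$ seminorm, pick $\rho_0$ smaller than the Lebesgue number of the finite subcover: any two points $x,y \in \overline D$ with $|x-y| \le \rho_0$ lie in a common patch, so the patchwise $\mathcal{C}^{1,\alpha}$ bound applies directly; for $|x-y| > \rho_0$ one uses $|Dv(x)-Dv(y)| \le 2\|Dv\|_{L^\infty} \le 2\|Dv\|_{L^\infty} \rho_0^{-\alpha} |x-y|^\alpha$. I do not expect a genuine obstacle here: the only delicate point is ensuring that the constant $r_0$ of Lemma \ref{cor-reg2} is truly uniform along $\partial D$, which is guaranteed by the smallness hypothesis on $\|\partial D\|_{\mathcal{C}^2}$ together with the explicit flattening map $\Phi$ from \cite{E} already invoked in the proof of Lemma \ref{cor-reg2}.
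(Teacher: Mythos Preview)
Your proposal is correct and follows exactly the approach the paper indicates: the paper's own proof is the single sentence ``From the above lemma, the interior estimate of the viscosity solution in \cite{CC} and the proper covering of the domain, we have'', and you have simply filled in the standard details of that covering/patching argument. The extra care you take with the Lebesgue number to globalize the H\"older seminorm is routine and not spelled out in the paper, but nothing in your argument deviates from or adds to the intended strategy.
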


Since $|u_\e| \le C \|f\|_\infty + \|g\|_\infty$, $|U_\e|$ is also bounded by the same constant $C \|f\|_\infty + \|g\|_\infty$ where $C$ is a constant depending only on $n$, $\lambda$, $\Lambda$ and the diameter of $D$. Now let us apply Lemma \ref{cor-reg3} to the solution $U_\e$ of equation \eqref{eq-cor-approx} to obtain 
\begin{equation}
\| U_\e \|_{\mathcal{C}^{1,\alpha}(\e^{-1} \overline D)} \le C\left(\| f(x,y)\|_{L^\infty(D \times \R^n)} + \|g(x,y)\|_{\mathcal{C}^2(D \times \R^n)} \right).
\end{equation}
Note that the constant $C$ only depends on $n$, $\lambda$, $\Lambda$, and $D$. In this way, we can show the uniform boundedness of $w_\e$.
\begin{lemma} \label{cor-reg}
Suppose that $U_\e$ is the solution of the equation \eqref{eq-cor-approx} and $w_\e$ is a solution of the equation \eqref{eq-cor1}. Suppose also that $\e$ is sufficiently small. Then we have the following estimate,
\begin{equation} \label{eq-cor-reg}
\| U_\e \|_{\mathcal{C}^{1,\alpha} \left(\e^{-1} D\right)} + \| w_\e \|_{\mathcal{C}^{1,\alpha} \left( H(\nu,y_{0,\e}) \right)}  \le C\left(\| f(x,y)\|_{L^\infty(D \times \R^n)} + \|g(x,y)\|_{\mathcal{C}^2(D \times \R^n)} \right)
\end{equation}
where $C$ is a constant which depends only on $n$, $\lambda$, $\Lambda$ and $D$.
\end{lemma}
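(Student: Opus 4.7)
The plan is to treat the two pieces of \eqref{eq-cor-reg} separately, combining a rescaling argument for $U_\e$ with a covering argument for $w_\e$.

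For $U_\e$, the starting point is the observation that rescaling by $\e^{-1}$ flattens $\partial D$: if $\partial D$ is locally the graph of a $\mathcal{C}^2$ function $\varphi$, then $\partial(\e^{-1}D)$ is locally the graph of $y \mapsto \e^{-1}\varphi(\e y)$, whose Hessian equals $\e\, D^2\varphi(\e y)$, so the $\mathcal{C}^2$ norm of $\partial(\e^{-1}D)$ is of order $\e$. In particular, for $\e$ sufficiently small the smallness hypothesis of Lemmas \ref{cor-reg2}--\ref{cor-reg3} holds uniformly in $\e$, and the constants produced by those lemmas are bounded independently of $\e$. The right-hand side of \eqref{eq-cor-approx} satisfies $\|\e^2 f(\e y,y)\|_{L^\infty} \le \|f\|_{L^\infty(D\times\R^n)}$, while the boundary datum $g(\e y,y)$ has $y$-derivatives of the form $\e\,\partial_x g + \partial_y g$ and $\e^2\,\partial_{xx} g + 2\e\,\partial_{xy}g + \partial_{yy}g$, each bounded by $\|g\|_{\mathcal{C}^2(D\times\R^n)}$. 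The $L^\infty$ bound $\|U_\e\|_{L^\infty}=\|u_\e\|_{L^\infty}\le C(\|f\|_{L^\infty}+\|g\|_{L^\infty})$ follows by comparison with explicit quadratic barriers via Lemma \ref{vis-com}, with $C$ depending only on $n,\lambda,\Lambda,\diam D$. Substituting these three bounds into Lemma \ref{cor-reg3} applied to $U_\e$ on $\e^{-1}D$ yields the first half of \eqref{eq-cor-reg} with $C$ independent of $\e$.

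For $w_\e$, the boundary of $H(\nu,y_{0,\e})$ is already flat, so no domain straightening is required. By Lemma \ref{hyper-ex}, $\|w_\e\|_{L^\infty}\le \|g(x_0,\cdot)\|_{L^\infty}\le \|g\|_{\mathcal{C}^2}$. I would cover $H(\nu,y_{0,\e})$ by unit balls, separating cases by distance to $\partial H(\nu,y_{0,\e})$. For a ball centered at $y$ with $\dist(y,\partial H)\ge 1$, the interior $\mathcal{C}^{1,\alpha}$ estimate of \cite{CC} controls $\|w_\e\|_{\mathcal{C}^{1,\alpha}(B_{1/2}(y))}$ by $\|w_\e\|_{L^\infty}$. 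For $y$ with $\dist(y,\partial H)<1$, let $y_*$ be the closest point on $\partial H$ and rotate so that $\nu=e_n$; the shifted function $\tilde w = w_\e - g(x_0,\cdot)$ lies in the Pucci class $\mathcal{S}(\lambda,\Lambda)$ with right-hand side bounded (via \eqref{con-F-unif} and \eqref{con-phomo}) by $\Lambda\|g\|_{\mathcal{C}^2}$ and vanishes on the flat portion of $\partial B_1^+(y_*)$; the corresponding boundary $\mathcal{C}^{1,\alpha}$ estimate (in the spirit of Lemma \ref{cor-reg1}, as established in \cite{CC} and \cite{LT}) then yields the required bound on $\|w_\e\|_{\mathcal{C}^{1,\alpha}(B_{1/2}^+(y_*))}$. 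A finite-overlap covering combines these local bounds into the global $\mathcal{C}^{1,\alpha}$ bound for $w_\e$.

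The main bookkeeping obstacle is to verify that the constants do not degenerate either as $\e \to 0$ or as the base point moves within the unbounded half-space $H(\nu,y_{0,\e})$. The former is exactly the $\mathcal{C}^2$ scaling computation above: the constant in Lemma \ref{cor-reg3} is monotone in the $\mathcal{C}^2$ norm of the boundary, which shrinks under the scaling. The latter is handled by the translation invariance of $H(\nu,y_{0,\e})$ together with the fact that the interior and boundary regularity estimates of \cite{CC} and \cite{LT} depend only on $n,\lambda,\Lambda$ and on the radius of the (half-)ball, and not on the location of the base point.
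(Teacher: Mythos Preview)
Your proposal is correct and follows essentially the same approach as the paper: obtain an $L^\infty$ bound by comparison, then feed it into the boundary/interior $\mathcal{C}^{1,\alpha}$ estimates of Lemmas \ref{cor-reg1}--\ref{cor-reg3} (for $U_\e$) and the flat-boundary version of Lemma \ref{cor-reg1} plus an interior covering (for $w_\e$). You are in fact more explicit than the paper on the key point that the $\mathcal{C}^2$ norm of $\partial(\e^{-1}D)$ scales like $\e$, which is what makes the constant in Lemma \ref{cor-reg3} uniform in $\e$; the paper asserts this dependence without spelling out the rescaling computation.
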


For given $x_0 \in \partial D$, let $y_{0,\e} = \frac{x_0}{\e}$ and $H(\nu,y_{0,\e}) = \{y \in \R^n : y \cdot \nu > y_{0,\e} \cdot \nu  \}$ where $\nu=-\nu(x_0) $ is the inward normal vector at $x_0$. Consider the small ball $B_{\varepsilon^p}(x_0)$, $0<p<1$. Let $w_\e$ be the solution of \eqref{eq-cor1}. Then the difference between $U_\varepsilon$ and $w_\varepsilon$ on $ \partial  \left( \e^{-1} D \right) \cap B_{\e^{p-1}} \left( y_{0,\e} \right)$ vanishes  as $\varepsilon \rightarrow 0$ if $2p >1$. 
\begin{lemma} \label{cor-approx1}
Let $U_\e$ be a solution of Equation \eqref{eq-cor-approx}, $x_0 \in \partial D$, and let $w_\e$ is a solution of \eqref{eq-cor1} for given fixed $x_0$ and $\nu=-\nu(x_0)$. Then, we have
\begin{equation}
\left| U_\e(y) - w_\e(y) \right| \le C \e^{2p-1} \quad\text{ on } \partial  \left( \e^{-1} D \cap H \left(\nu, y_{0,\e} \right) \right) \cap B_{\e^{p-1}} \left( y_{0,\e} \right)
\end{equation}
for all $y \in \partial  \left( \e^{-1} D \right) \cap B_{\e^{p-1}} \left( y_{0,\e} \right)$ and for all $0<p<1$ where $C$ depends only on $n$, $\lambda$, $\Lambda$ and the $\mathcal{C}^2$ norm of $\partial D$. 
\end{lemma}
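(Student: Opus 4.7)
The plan is to compare $U_\e$ and $w_\e$ pointwise on the boundary piece $\partial(\e^{-1}D) \cap B_{\e^{p-1}}(y_{0,\e})$ via their two boundary conditions, exploiting the second-order tangency of $\partial(\e^{-1}D)$ to the hyperplane $\partial H(\nu, y_{0,\e})$ at $y_{0,\e}$ together with the uniform $\mathcal{C}^{1,\alpha}$ regularity of $w_\e$ supplied by Lemma \ref{cor-reg}.

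First I would fix $y \in \partial(\e^{-1}D) \cap B_{\e^{p-1}}(y_{0,\e})$ and let $\tilde y$ denote its orthogonal projection onto $\partial H(\nu, y_{0,\e})$. Writing $\partial D$ locally near $x_0$ as the graph of a $\mathcal{C}^2$ function $\phi$ over the tangent hyperplane, with $\phi(0)=0$ and $\nabla \phi(0) = 0$, and then rescaling by $\e^{-1}$, one obtains
\begin{equation*}
|y - \tilde y| \le \tfrac{1}{2} \|\phi\|_{\mathcal{C}^2} \cdot \e \cdot |y - y_{0,\e}|^2 \le C \e^{2p-1},
\end{equation*}
which is the key geometric estimate and the source of the exponent $2p - 1$. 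I would then apply the triangle inequality
\begin{equation*}
|U_\e(y) - w_\e(y)| \le |U_\e(y) - w_\e(\tilde y)| + |w_\e(\tilde y) - w_\e(y)|.
\end{equation*}
For the first term, the two boundary data give $U_\e(y) = g(\e y, y)$ and $w_\e(\tilde y) = g(x_0, \tilde y)$, so the $\mathcal{C}^2$ regularity of $g$ yields
\begin{equation*}
|g(\e y, y) - g(x_0, \tilde y)| \le C\bigl(|\e y - x_0| + |y - \tilde y|\bigr) \le C(\e^p + \e^{2p-1}).
\end{equation*}
For the second term, the uniform bound $\|w_\e\|_{\mathcal{C}^{1,\alpha}} \le C$ from Lemma \ref{cor-reg} gives $|w_\e(y) - w_\e(\tilde y)| \le C|y - \tilde y| \le C \e^{2p-1}$. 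Since $\e^p \le \e^{2p-1}$ for $0 < p < 1$ and $\e$ small, the two contributions combine to $C \e^{2p-1}$.

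The one delicate point I expect is that $y$ may lie on the side of $\partial H(\nu, y_{0,\e})$ that is outside $H(\nu, y_{0,\e})$, in which case $w_\e(y)$ is not a priori defined. I would handle this either by extending $w_\e$ slightly across $\partial H(\nu, y_{0,\e})$ as a $\mathcal{C}^{1,\alpha}$ function using the boundary regularity of Lemma \ref{cor-reg}, or by interpreting $w_\e(y)$ through its value at $\tilde y$ and absorbing the discrepancy into the $O(\e^{2p-1})$ error. Apart from this minor technicality, the argument is essentially a Taylor expansion of $\partial D$ at $x_0$ combined with the regularity of $w_\e$ and $g$.
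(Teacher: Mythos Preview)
Your proposal is correct and follows essentially the same route as the paper: project $y$ orthogonally to $\tilde y$ on the tangent hyperplane, use the $\mathcal C^2$ bound on $\partial D$ to get $|y-\tilde y|\le C\e^{2p-1}$, split via the triangle inequality, and control the two pieces by the regularity of $g$ and the uniform $\mathcal C^{1,\alpha}$ estimate on $w_\e$ from Lemma~\ref{cor-reg}. The paper also treats the second boundary piece $y\in\partial H(\nu,y_{0,\e})$ by the symmetric argument, and your remark about $w_\e(y)$ possibly being undefined when $y$ falls outside the half-space is a point the paper glosses over; your proposed fix via the boundary $\mathcal C^{1,\alpha}$ extension is adequate.
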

\begin{proof}
For the simplicity of the calculation, we assume that $\nu=e_n$ and the general result can be obtained by a rotation. 
Since $\partial D$ is in $\mathcal{C}^2$, $\partial D \cap B_{\e^{p-1}}\left( y_{0,\e} \right)$ is contained between two hyperplanes $C_1 \e^{2p-1} \nu + \partial H(-\nu,y_{0,\e})$ and $C_1 \e^{2p-1} \nu - \partial H(-\nu,y_{0,\e})$ where $C_1$ is the constant depends only on the $\mathcal{C}^2$ norm of the boundary $\partial D$.
Fix $y \in \partial  \left( \e^{-1} D \cap H \left(-\nu,y_{0,\e} \right) \right) \cap B_{\e^{p-1}} \left( y_{0,\e} \right)$. We first assume that $y \in \partial \left( \e^{-1} D \right)$. Then, from the estimate \eqref{eq-cor-reg}, we have
\begin{equation} \begin{aligned}
\left| U_\e(y) - w_\e(y) \right| &= \left| g(\e x_0, y) - w_\e(y) \right| \\
&\le \left| g(\e x_0, y) - w_\e(y^\prime) \right| + \left| w_\e(y^\prime) - w_\e(y) \right| \\
&= \left| g(\e y, y) - g(\e x_0, y^\prime) \right| + \left| w_\e(y^\prime) - w_\e(y) \right| \\
\end{aligned} \end{equation}
where $y^\prime$ be the orthogonal projection of $y$ to the hyper-plane $\partial H \left(-\nu, y_{0,\e} \right)$.

Since $|y^\prime -y| \le  C_1 \e^{2p-1}$ and $ |(\e y, y) - (\e x_0, y^\prime) | \le C_2 \e^{2p-1}$, we get the result because of the regularity of $g$ and Lemma \eqref{cor-reg}. For $y \in \partial H(-\nu, y_{0,\e})$, the conclusion comes from the similar argument. 
\end{proof}

\begin{lemma} \label{cor-approx2}
Let $p$ and $q$ be constants satisfying $\frac{1}{2} < p < q < 1$. Let $Q^\prime$ be a cube on $\partial \R^{n-1}$ centered at the origin with side length $\e^{p-1}$ and let $Q = Q^\prime \times (0,\e^{p-1})$. Suppose that $h_\pm$ are the solutions to the following equation,
\begin{equation} \label{eq-cor-approx2} \begin{cases}
\cM^\pm(D^2 h_\pm) = 0 &\text{ in } Q \\
h_\pm =0  &\text{ on } Q^\prime \\
h_\pm = \pm1 &\text{ on } \partial Q \setminus Q^\prime.
\end{cases} \end{equation}

Then, 
\begin{equation}
\|h_\pm\|_{L^\infty(B_{\e^{q-1}}(0))} \le C \e^{q-p}
\end{equation}
where $C$ is a consatant depending only on the $n$, $\lambda$, and $\Lambda$. 
\end{lemma}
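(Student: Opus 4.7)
The strategy is to reduce to a fixed-size geometry by rescaling and then to invoke boundary regularity for the Pucci extremal operator. Set $\tilde y = y/\e^{p-1}$ and $\tilde h_\pm(\tilde y) = h_\pm(\e^{p-1}\tilde y)$. By the scaling invariance of the homogeneous Pucci equation, $\tilde h_\pm$ solves $\cM^\pm(D^2\tilde h_\pm)=0$ in the unit cube $\tQ = \left(-\frac{1}{2},\frac{1}{2}\right)^{n-1}\times(0,1)$, with $\tilde h_\pm = 0$ on the bottom face $\tQ'$ and $\tilde h_\pm = \pm 1$ on the remaining faces; in particular $\|\tilde h_\pm\|_{L^\infty(\tQ)} \le 1$.

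The heart of the argument is the boundary Lipschitz bound
\begin{equation*}
|\tilde h_\pm(\tilde y)| \le C\,|\tilde y| \qquad \text{for all } \tilde y \in B_{1/4}^+(0),
\end{equation*}
with $C = C(n,\lambda,\Lambda)$, which expresses Lipschitz regularity at an interior point of a flat face on which the Dirichlet data vanishes. In the half-ball $B_{1/2}^+(0) \subset \tQ$, the flat part of its boundary lies strictly inside $\tQ'$ where the data is the smooth function $0$, while the curved hemisphere sits at positive distance from the lateral and top faces of $\tQ$ (on which the data jumps to $\pm 1$), so $\tilde h_\pm$ is controlled in $L^\infty$ there by $1$. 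Invoking the boundary $C^{1,\alpha}$ regularity for viscosity solutions of Pucci-type operators (Krylov--Caffarelli--Cabr\'e; see Chapter~9 of \cite{CC} and also \cite{LT}) then gives $\tilde h_\pm \in C^{1,\alpha}(\overline{B_{1/4}^+})$ with a universal bound. Since $\tilde h_\pm(0)=0$ and all tangential derivatives along the flat face vanish identically, Taylor's theorem at the origin gives $|\tilde h_\pm(\tilde y)| \le C\,\tilde y_n + O(|\tilde y|^{1+\alpha}) \le C\,|\tilde y|$ on $B_{1/4}^+$.

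Undoing the scaling finishes the proof: choose $\e$ small enough that $\e^{q-p}\le 1/4$; for any $y\in B_{\e^{q-1}}(0)\cap Q$, the point $\tilde y = y/\e^{p-1}$ lies in $B_{1/4}^+$ because $|\tilde y|\le \e^{q-p}\le 1/4$, and therefore
\begin{equation*}
|h_\pm(y)| = |\tilde h_\pm(\tilde y)|\le C\,|\tilde y|\le C\,\e^{q-p}.
\end{equation*}

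The main obstacle is the boundary Lipschitz estimate itself: a mere boundary H\"older bound would only yield $C\,\e^{(q-p)\alpha}$ for some $\alpha<1$, which is strictly weaker than the claim. Genuine $C^{1,\alpha}$ boundary regularity is needed, carefully localized to a neighborhood of the origin that stays away from the edges $\partial\tQ'\times\{0\}$ across which the Dirichlet data is discontinuous. The hypothesis $\frac{1}{2}<p<q<1$ is used precisely here: it guarantees that $B_{\e^{q-1}}(0)$ sits deep inside $Q$, so that after rescaling the relevant ball falls comfortably inside the good region $B_{1/4}^+$ on which the regularity estimate applies with universal constants.
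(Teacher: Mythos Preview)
Your argument is correct but takes a different route from the paper's. The paper writes down an explicit quadratic barrier
\[
\widetilde h_+(x) = \left|\frac{x'}{\e^{p-1}}\right|^2 + \frac{(n-2)\Lambda}{\lambda}\left(-\Bigl(\frac{x_n}{\e^{p-1}}-1\Bigr)^2 + 1\right),
\]
verifies by direct computation that $\cM^+(D^2\widetilde h_+)\le 0$ and that $\widetilde h_+$ dominates $h_+$ on $\partial Q$, and then simply evaluates this polynomial on $B_{\e^{q-1}}(0)$ to read off the bound $C\e^{q-p}$ (and analogously $-\widetilde h_+\le h_-$). You instead rescale to a fixed cube and invoke Krylov--Caffarelli boundary $C^{1,\alpha}$ regularity to obtain the Lipschitz bound $|\tilde h_\pm(\tilde y)|\le C|\tilde y|$ at the origin, then scale back. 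The paper's approach is completely elementary and self-contained once the right polynomial is written down; your approach avoids barrier-guessing and the associated constant-tuning, but imports a substantially heavier piece of regularity theory. Your remark that mere boundary H\"older continuity would only yield $\e^{(q-p)\alpha}$ is exactly the point: the paper's barrier has linear (indeed quadratic) growth in the scaled variables near the origin, which is precisely what the $C^{1,\alpha}$ estimate also delivers. One cosmetic gap: you restrict to $\e^{q-p}\le 1/4$, but for the complementary range the trivial bound $|h_\pm|\le 1\le 4\,\e^{q-p}$ already gives the conclusion.
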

\begin{proof}
Consider the following equation
\begin{equation}
\widetilde h_+(x) = \left| \frac{x^\prime}{\e^{p-1}} \right|^2 + \displaystyle\frac{(n-2) \Lambda}{\lambda} \left( - \left( \frac{x_n}{\e^{p-1}} - 1 \right)^2 + 1 \right)
\end{equation}
where $\Lambda$ and $\lambda$ are elliptic constants in \eqref{con-F-unif}.
By the direct calculation, we can check that $\cM^+(D^2\widetilde h_+) \le 0$ and $\widetilde h_+ \ge 1$ on $\partial Q$. That implies 
$\widetilde h_+ \ge h_+$ in $Q$ from the comparison. In the similar way, we can show that  $-\widetilde h_+ \le h_-$. Hence we have the following,
\begin{equation}
\| v \|_{L^\infty(B_{\e^{q-1}}(0))} \le \| h \|_{L^\infty(B_{\e^{q-1}}(0))} \le C  \e^{q-p}
\end{equation}
where $C$ is a constant depending only on the $n$, $\lambda$, and $\Lambda$. 
\end{proof}

\begin{lemma} \label{cor-approx3}
Let $U_\e$ be the solution of Equation \eqref{eq-cor-approx} and let $w_\e$ be the solution of \eqref{eq-cor1}. Then, for  $\frac{1}{2} < p < q < 1$ satisfying $2p-1 \le q-p$, we have
\begin{equation}
\left\| U_\e(y) - w_\e(y) \right\|_{L^\infty( \e^{-1} D \,\cap\, H (-\nu, y_{0,\e} ) \,\cap\, B_{\e^{q-1}} ( y_{0,\e} ) )} \le C \e^{2p-1} 
\end{equation}
where $C$ depends only on $n$, $\lambda$, $\Lambda$ and the $\mathcal{C}^2$ norm of $\partial D$ and $\nu =-\nu(x_0)$ is the inward normal vector at $x_0$. 
\end{lemma}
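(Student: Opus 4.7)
The plan is to control $v:=U_\e-w_\e$ on $B_{\e^{q-1}}(y_{0,\e})$ by a Pucci barrier argument carried out in a shifted cylinder $Q$ fitting inside $\Omega_\e:=\e^{-1}D\cap H(-\nu,y_{0,\e})\cap B_{\e^{p-1}}(y_{0,\e})$, combining the sharp boundary estimate of Lemma \ref{cor-approx1} on the flat face of $Q$ with the trivial $L^\infty$ bound on the far face and the Pucci decay of Lemma \ref{cor-approx2}. Fix coordinates so that $y_{0,\e}=0$ and $-\nu(x_0)=e_n$. Since $\partial D\in\mathcal{C}^2$, the rescaled boundary $\partial(\e^{-1}D)\cap B_{\e^{p-1}}(0)$ lies inside the vertical strip $|y_n|\le \kappa_0\e^{2p-1}$, with $\kappa_0$ depending only on $\|\partial D\|_{\mathcal{C}^2}$. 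Choose $Q=Q'\times(2\kappa_0\e^{2p-1},\e^{p-1})$ with $Q'$ an $(n-1)$-cube of side $\e^{p-1}$ centered at $2\kappa_0\e^{2p-1}e_n$; then $Q\subset\Omega_\e$ for $\e$ small and $B_{\e^{q-1}}(0)\cap\Omega_\e\subset Q$. Because $Q'$ lies at distance $O(\e^{2p-1})$ from the flat portion of $\partial\Omega_\e$, combining Lemma \ref{cor-approx1} with the uniform Lipschitz bound of Lemma \ref{cor-reg} yields $|v|\le C_1\e^{2p-1}$ on $Q'$, while the $L^\infty$ bound from Lemma \ref{cor-reg} gives $|v|\le 2M$ on $\partial Q\setminus Q'$, where $M$ depends on $\|f\|_\infty$ and $\|g\|_{\mathcal{C}^2}$.

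Since $F$ is $x$-independent, the argument behind Theorem \ref{thm-vis-diff} shows that $v$ is a viscosity sub-solution of $\cM^+(D^2 w)=\e^2 f(\e y,y)$ and a super-solution of $\cM^-(D^2 w)=\e^2 f(\e y,y)$ in $Q$, with right-hand side bounded by $C\e^2$. With $h_\pm$ from Lemma \ref{cor-approx2} applied to the translated cube $Q$, set
\[
\phi_+(y)=2C_1\e^{2p-1}-2M\,h_-(y)-C_2\,\e^2\,|y|^2.
\]
Using $\cM^+(D^2(-h_-))=-\cM^-(D^2 h_-)=0$, $\cM^+(-2C_2\e^2 I)=-2nC_2\lambda\e^2$, and sub-additivity of $\cM^+$, we get $\cM^+(D^2\phi_+)\le -2nC_2\lambda\e^2\le \e^2 f(\e y,y)$ once $C_2$ is large enough in terms of $\|f\|_\infty,\lambda$, so $\phi_+$ is a classical super-solution of the same Pucci equation as $v$. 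By the construction of $h_-$, $\phi_+\ge 2C_1\e^{2p-1}-C_2 n\e^{2p}\ge C_1\e^{2p-1}\ge v$ on $Q'$, and $\phi_+\ge 2M\ge v$ on $\partial Q\setminus Q'$. Pucci comparison then forces $v\le\phi_+$ throughout $Q$.

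Restricting to $B_{\e^{q-1}}(0)$ and invoking the decay provided by Lemma \ref{cor-approx2}, one obtains
\[
v(y)\le 2C_1\e^{2p-1}+2MC_3\e^{q-p}+C_2\e^{2q}\le C\e^{2p-1},
\]
where the last inequality uses the hypothesis $2p-1\le q-p$ together with $2q\ge 2p-1$ (which follows from $q>1/2$). The symmetric barrier built from $h_+$ yields the matching lower bound, and together these prove the claim. The principal subtlety is the geometric fitting step: the rescaled $\mathcal{C}^2$-deviation of $\partial D$ from its tangent plane at $x_0$, the error in Lemma \ref{cor-approx1}, and our target error are all of size $\e^{2p-1}$, so the $\mathcal{C}^2$-hypothesis on $\partial D$ is just enough, and the constraint $2p-1\le q-p$ is precisely what renders the Pucci decay rate $\e^{q-p}$ from Lemma \ref{cor-approx2} compatible with this target.
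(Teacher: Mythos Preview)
Your argument is correct and follows the paper's own strategy: place $v=U_\e-w_\e$ in a cube of scale $\e^{p-1}$, sandwich it between Pucci barriers built from the $h_\pm$ of Lemma~\ref{cor-approx2} using Lemma~\ref{cor-approx1} on the flat face and the crude $L^\infty$ bound on the remaining faces, then read off the $\e^{q-p}$ decay on $B_{\e^{q-1}}$. You are in fact slightly more careful than the paper---shifting the cube by $O(\e^{2p-1})$ so that it sits inside $\e^{-1}D$, and absorbing the forcing $\e^2 f$ with an explicit quadratic rather than discarding it as $o(\e)$---but otherwise the two proofs coincide.
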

\begin{proof}
We assume that $x_0 = 0$ and $\nu = e_n$ without any loss of generality and the result for general $x_0 \in \partial D$ and $\nu$ can be obtained by a translation and a rotation of the domain. Also, we ignore the term $\e^2 f(\e y,y)$ in the equation \eqref{eq-cor-approx} because the size of error is $o(\e)$. Let $Q$ be a cube same as in Lemma \ref{cor-approx2}.  Let $ v= U_\e-w_\e$. Note that from \cite{CC}, $v \in \mathcal{S}(\lambda,\Lambda)$ that means $v$ is a solution of a uniform elliptic operator with elliptic constant $\lambda$ and $\Lambda$. In addition, from Lemma \ref{cor-approx1} and from the fact that $|U| + |w| \le 2 |g|_\infty$, $|v| \le 2|g|_\infty$ on $\partial Q \setminus Q^\prime$ and $|v| \le C_1 \e^{2p-1}$ on $Q^\prime$ where $C_1$ is a constant which is the same in the proof of Lemma \ref{cor-approx1}. Note that the function 
\begin{equation}
v_\pm(x) = \pm C_1 \e^{2p-1} + 2 |g|_\infty h_\pm(x) 
\end{equation}
be a (viscosity) super- and sub-solution respectively where $h_\pm(x)$ be functions defined same in Lemma \ref{cor-approx2}. Moreover, because of the definition of $v^\pm$, $v_- \le v \le v_+$ holds on $\partial Q$ and hence the following holds for all $y \in Q$,
\begin{equation}
v_- \le v \le v_+
\end{equation}
and it implies the conclusion. 
\end{proof}

%%%%%%%%%%%%%%%%%%%%%%%%%%%%%%%%%%%%%%%%%%%%%%%%%%%%%
\section{The Continuity of $\overline g$} \label{sec-gconti}
%%%%%%%%%%%%%%%%%%%%%%%%%%%%%%%%%%%%%%%%%%%%%%%%%%%%%
We denote $w_\e(y;x_0,\nu)$ if it is a solution of the equation \eqref{eq-cor1} for a point $x_0 \in \partial D$ and a direction $\nu \in S^{n-1}$. Additionally, $w_\e(y;x_0)$ is defined by $w_\e(y;x_0,-\nu(x_0))$ where $\nu(x_0)$ is a outward normal vector of $D$ at a point $x_0$. We apply the same notation convention to the effective boundary data $\overline g(x_0,\nu)$ hence $\overline g(x_0) = \overline g(x_0,-\nu(x_0))$. 

The goal of this section is to show the continuity of $\overline g$. As we mentioned in the previous section, $\overline g$ is well defined when $\nu(x)$ is irrational. 
\begin{prop} \label{prop-gconti}
$\overline  g(x,\nu)$ defined the same as in the Definition \ref{def-cor-overg}, is continuous on $\partial D \times \{ \nu \in S^{n-1} : \nu \in \mathcal{IR}\}$. 
\end{prop}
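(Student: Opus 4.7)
The proof strategy is to reduce joint continuity to (i) a Lipschitz estimate in $x$ with constant independent of $\nu\in\mathcal{IR}$, and (ii) pointwise continuity in $\nu$ at each fixed $x_0\in\partial D$. Together these give joint continuity via
\begin{equation*}
|\overline g(x_k,\nu_k)-\overline g(x_0,\nu_0)|\le|\overline g(x_k,\nu_k)-\overline g(x_0,\nu_k)|+|\overline g(x_0,\nu_k)-\overline g(x_0,\nu_0)|
\end{equation*}
as $(x_k,\nu_k)\to(x_0,\nu_0)$. A point I would use throughout is that the proof of Lemma \ref{cor-lim} is not specific to bases of the form $x_0/\e$: the same translation-plus-periodicity argument shows that, for any irrational $\nu$ and periodic $\mathcal{C}^2$ boundary profile $\phi$, the limit at infinity of the corrector on $H(\nu,y)$ with boundary data $\phi$ is independent of the choice of base $y\in\R^n$.

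For (i), I would fix $\e=1$ and, given $x,x_0\in\partial D$, introduce the auxiliary function $\tilde w$ solving the corrector equation on the same half-space $H(\nu,x_0)$ as $w(\cdot;x_0,\nu)$ but with boundary profile $g(x,\cdot)$. The weak maximum principle on unbounded half-spaces from \cite{CLV}, applied to the difference $\tilde w-w(\cdot;x_0,\nu)$ which lies in the Pucci class by Theorem \ref{thm-vis-diff}, gives $\|\tilde w-w(\cdot;x_0,\nu)\|_{L^\infty}\le\|g(x,\cdot)-g(x_0,\cdot)\|_{L^\infty}\le C|x-x_0|$. Evaluating at $y=x_0+t\nu$ and sending $t\to\infty$, the corrector term converges to $\overline g(x_0,\nu)$ by definition while $\tilde w(x_0+t\nu)$ converges to $\overline g(x,\nu)$ by the base-independence observation above, yielding $|\overline g(x_0,\nu)-\overline g(x,\nu)|\le C|x-x_0|$ with $C$ depending only on $n,\lambda,\Lambda$ and the $\mathcal{C}^1$-norm of $g$ in the $x$-variable. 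For (ii), with $\nu_k\to\nu_0$ in $\mathcal{IR}$ and $w_k$ the corrector on $H(\nu_k,x_0)$, the uniform $L^\infty$ bound and the uniform $\mathcal{C}^{1,\alpha}$-estimate of Lemma \ref{cor-reg} allow Arzel\`a--Ascoli extraction of a locally uniform subsequential limit on $H(\nu_0,x_0)^\circ$. Stability of viscosity solutions under uniform convergence plus propagation of the boundary condition through the uniform H\"older bound, together with Lemma \ref{hyper-uniq}, identify this limit with $w_0$, so the whole sequence $w_k\to w_0$ locally uniformly. Writing $\alpha_k=\overline g(x_0,\nu_k)$ and $\alpha_0=\overline g(x_0,\nu_0)$, one then has
\begin{equation*}
|\alpha_k-\alpha_0|\le W_{k,T}+|w_k(x_0+T\nu_k)-w_0(x_0+T\nu_0)|+W_{0,T},
\end{equation*}
where the middle term vanishes for fixed $T$ by local uniform convergence on the compact set containing all $x_0+T\nu_k$, and the two oscillations must be controlled uniformly in $k$ as $T\to\infty$.

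The main obstacle is precisely this uniform oscillation bound, since the rate $R(\delta,\nu)$ from Lemma \ref{hyper-ud} that controls $W_{\e,t}$ in \eqref{eq-cor-osc} depends sensitively on the Diophantine behavior of $\nu$ and is not continuous on $\mathcal{IR}$. My plan is to exploit that the sequence converges to a fixed $\nu_0\in\mathcal{IR}$: irrationality of $\nu_0$ supplies some $R_0=R_0(\delta,\nu_0)$ and an integer point $m_0\in Q^\prime_{R_0}\cap\mathbb Z^{n-1}$ whose linear form $h_{\nu_0}(m_0)=\sum_{i<n}\tfrac{\nu_0^i}{\nu_0^n}m_0^i$ has fractional part strictly inside $(\delta/4,3\delta/4)$; by continuity of the map $\nu\mapsto h_\nu(m_0)$ at the fixed integer $m_0$, this same $(R_0,m_0)$ still produces fractional part below $\delta$ for all $\nu_k$ sufficiently close to $\nu_0$. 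Running the argument of Lemma \ref{hyper-osc} with this common choice then yields
\begin{equation*}
W_{k,t}\le 2\left(\frac{3R_0}{t}\right)^{\log_2(1/\gamma)}\|g\|_\infty+C\|g\|_{\mathcal{C}^2}\delta
\end{equation*}
uniformly in large $k$, and choosing $\delta$ first, then $T$ large, then $k$ large completes the proof.
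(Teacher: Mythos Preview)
Your overall architecture matches the paper's: Lipschitz continuity in $x$ via the half-space maximum principle (this part is correct and essentially identical to the paper), and a three-term splitting in $\nu$ into two oscillation tails plus a fixed-$T$ comparison. For the middle term you invoke Arzel\`a--Ascoli and stability where the paper uses a direct barrier comparison (its Lemma~\ref{gconti-2}); both routes are fine.

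The genuine gap is in your uniform oscillation bound. The proof of Lemma~\ref{hyper-osc} needs, for \emph{every} lattice cube $Q_R(k')\subset\partial H(\nu,0)$, an almost-integer point $\hat y(k')\in Q_R(k')$: this is what lets one translate an arbitrary $y_2'\in Q_R(k_0')$ back into $Q_{3R}(0)$ while changing $w$ by only $O(\delta)$, and thereby reduce $\osc_{\Pi(t)}w$ to $\osc_{Q_{3R}(t)}w$. Your perturbation argument produces a \emph{single} $m_0\in Q'_{R_0}\cap\mathbb Z^{n-1}$ with small fractional part of $h_\nu(m_0)$ for all $\nu$ near $\nu_0$. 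But one almost-period $\hat y_0=(m_0,h_\nu(m_0))$ only yields $|w(y+\hat y_0)-w(y)|\le C\delta$, i.e.\ near-periodicity in one direction of the $(n{-}1)$-dimensional hyperplane; iterating accumulates error linearly, and in any case integer multiples of $\hat y_0$ span only a one-dimensional sublattice, so you cannot bring an arbitrary $y_2'$ near the origin this way. ``Running the argument of Lemma~\ref{hyper-osc} with this common choice'' therefore does not go through.

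What is missing is precisely the content of the paper's Lemma~\ref{gconti-ap}: for irrational $\nu_0$, Lemma~\ref{hyper-ud} gives more than a single good $m_0$---one can take $R_0$ so large that in $Q'_{R_0}(0)$ \emph{every} interval of length $\delta/3$ in $\R/\mathbb Z$ is hit by some $t(m,\nu_0)$. Since $h(\cdot,\nu)-h(\cdot,\nu_0)$ is linear, its oscillation over any cube of side $R_0$ is $O(R_0|\nu-\nu_0|)$, uniformly in $k'$; so for each $k'$ one targets the shifted interval $-c(k')+(\delta/3,2\delta/3)$ (with $c(k')$ the value of $h(\cdot,\nu)-h(\cdot,\nu_0)$ at the cube center) and obtains an $m(k')\in Q'_{R_0}(k')$ with $t(m(k'),\nu)<\delta$. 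This furnishes the required almost-integer point in every cube, uniformly for $\nu\in B_\eta(\nu_0)$, after which your displayed bound on $W_{k,t}$ becomes valid and the rest of your argument goes through.
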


For a given direction $\nu_0 \in S^{n-1}$, the hyper-plain $\partial H(\nu_0,0)$ is well defined and it can be represented as a union of disjoint cubes $Q_R(k^\prime,\nu_0)$, $k^\prime \in \mathbb{Z}^{n-1}$ with radius $R$. According to Lemma \ref{hyper-ap}, for each irrational $\nu_0 \in S^{n-1}$ and $\delta>0$, There exists a constant $R>0$ such that each cube $Q_R(k^\prime)$ has a point $\hat y(k^\prime,\nu)$ satisfying
\begin{equation}| g(x_0,y + \hat y(k^\prime,\nu)) -g(x_0,y) |  \le \delta
\end{equation}
for all $y \in \R^n$. The following lemma tells us that, for given any $\nu_0$, we can choose $R$ independently on the choice of $\nu$ in a small neighborhood of $\nu_0$. 
\begin{lemma} \label{gconti-ap}
Suppose that $\nu_0$ is irrational. Then for given any $\delta >0$, there exist a constant $R$, a neighborhood $B_\eta(\nu_0) \cap S^{n-1}$ of $\nu_0$, and a family of mutually disjoint cubes $\{ Q_R(k^\prime,\nu) : k^\prime \in \mathbb Z^{n-1}, \nu \in B_\eta(\nu_0) \cap S^{n-1} \}$, satisfying
\begin{enumerate}
\item
each cube $Q_R(k^\prime,\nu)$ has side length $R$, and 
\item
$\bigcup_{k^\prime \in \mathbb Z^{n-1}} \overline Q_R(k^\prime,\nu) = H(\nu,0)$,
\end{enumerate}
such that, for each given $\nu \in B_\eta(\nu_0) \cap S^{n-1}$ and $k^\prime \in \mathbb{Z}^{n-1}$, there is a point $\hat y(k^\prime,\nu) \in Q_R(k^\prime,\nu)$ satisfying $|\hat y(k^\prime,\nu) -\widetilde m |\le \delta$ for some $\widetilde m \in \mathbb{Z}^n$. 
\end{lemma}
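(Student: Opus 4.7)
Proof proposal.

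The strategy is to reduce Lemma~\ref{gconti-ap} to a finite Diophantine statement that is stable under small perturbations of the direction $\nu$. After a coordinate relabeling (so that $(\nu_0)_n \neq 0$), parametrize the hyperplane $\partial H(\nu,0)$ as the graph $y_n = m \cdot \alpha(\nu)$ with $\alpha(\nu) := -\nu'/\nu_n$. For $m \in \mathbb Z^{n-1}$, the point $(m, m \cdot \alpha(\nu)) \in \partial H(\nu,0)$ has distance $\mathrm{dist}(m \cdot \alpha(\nu), \mathbb Z)$ from the integer point $(m, \mathrm{round}(m \cdot \alpha(\nu))) \in \mathbb Z^n$, so it is enough to produce, for every $k' \in \mathbb Z^{n-1}$ and every $\nu \in B_\eta(\nu_0) \cap S^{n-1}$, some $m \in Q^\prime_R(k') \cap \mathbb Z^{n-1}$ with $\mathrm{dist}(m \cdot \alpha(\nu), \mathbb Z) \le \delta$, with $R$ chosen independently of $\nu$ and $k'$.

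Applying Lemma~\ref{hyper-ud} to the linear form $y' \mapsto \alpha_0 \cdot y'$, where $\alpha_0 := \alpha(\nu_0)$ has at least one irrational coordinate because $\nu_0 \in \mathcal{IR}$, I fix a positive integer $R$ large enough that the discrepancy $\rho_{\nu_0}(1/R)$ is much smaller than $\delta$. This forces the finite set
\[
S_0 := \{\, m \cdot \alpha_0 \bmod 1 : m \in Q^\prime_R \cap \mathbb Z^{n-1} \,\} \subset \mathbb R / \mathbb Z
\]
to be $(\delta/4)$-dense. Because $Q^\prime_R$ contains only finitely many integer points, each of norm at most $R\sqrt{n-1}/2$, and $\alpha(\nu)$ depends continuously on $\nu$ near $\nu_0$, one can then select $\eta > 0$ so small that $|m \cdot \alpha(\nu) - m \cdot \alpha_0| \le \delta/4$ for every $\nu \in B_\eta(\nu_0) \cap S^{n-1}$ and every such $m$. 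Consequently $S_\nu := \{\, m \cdot \alpha(\nu) \bmod 1 : m \in Q^\prime_R \cap \mathbb Z^{n-1} \,\}$ lies within Hausdorff distance $\delta/4$ of $S_0$ in $\mathbb R/\mathbb Z$, so $S_\nu$ remains $(\delta/2)$-dense.

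To pass from the centered cube $Q^\prime_R$ to an arbitrary translate $Q^\prime_R(k') = R k' + Q^\prime_R$, I exploit that $R \in \mathbb Z_+$, so $R k' \in \mathbb Z^{n-1}$ and every $m \in Q^\prime_R(k') \cap \mathbb Z^{n-1}$ decomposes as $m = m_0 + R k'$ with $m_0 \in Q^\prime_R \cap \mathbb Z^{n-1}$. Since $m \cdot \alpha(\nu) \equiv m_0 \cdot \alpha(\nu) + R k' \cdot \alpha(\nu) \pmod 1$, producing $m$ with $\mathrm{dist}(m \cdot \alpha(\nu), \mathbb Z) \le \delta$ is equivalent to producing $m_0$ whose residue $m_0 \cdot \alpha(\nu) \bmod 1$ lies in the arc of length $2\delta$ centered at $-R k' \cdot \alpha(\nu) \bmod 1$, which the $(\delta/2)$-density of $S_\nu$ delivers. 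Setting $\hat y(k',\nu) = (m, m \cdot \alpha(\nu))$ and $\widetilde m = (m, \mathrm{round}(m \cdot \alpha(\nu)))$ then gives $|\hat y(k',\nu) - \widetilde m| \le \delta$, while the cubes $Q_R(k',\nu) \subset \partial H(\nu,0)$, defined as the preimages of $Q^\prime_R(k')$ under the projection from the hyperplane, are mutually disjoint, tile $\partial H(\nu,0)$, and have side length $R$ in the projected sense.

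The main obstacle is the transition from the $\nu_0$-specific quantitative equidistribution of Lemma~\ref{hyper-ud} to a single $R$ that works uniformly for $\nu$ in an open neighborhood of $\nu_0$: such neighborhoods generally contain rational directions with arbitrarily poor Diophantine properties, so uniform equidistribution in any infinite-orbit sense fails. What makes the argument work is that one only needs $(\delta/2)$-density of the finite set $S_\nu$, a property preserved under small perturbations by pointwise continuity on the finitely many integer points of $Q^\prime_R$; the $k'$-uniformity is then free because choosing $R$ to be an integer turns translation by $Rk'$ into a torus-preserving operation.
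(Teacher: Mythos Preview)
Your proof is correct and follows essentially the same approach as the paper's: fix $R$ using the quantitative equidistribution of Lemma~\ref{hyper-ud} at $\nu_0$, then propagate to nearby $\nu$ by pointwise continuity on the finitely many integer points in the reference cube, and handle an arbitrary translate $Q'_R(k')$ by shifting the target arc on $\mathbb{R}/\mathbb{Z}$. The paper performs this last step by subtracting the offset $c(k')=h(k',\nu)-h(k',\nu_0)$ and choosing the target interval $I=-c(k')+(\delta/3,2\delta/3)$, whereas you take $R\in\mathbb{Z}_+$ so that the shift by $Rk'$ preserves $\mathbb{Z}^{n-1}$ and the target arc becomes the one centered at $-Rk'\cdot\alpha(\nu)$; these are cosmetic variants of the same idea.
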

\begin{proof}
For simplicity, assume that $|\nabla g| \le 1$ and $(\nu_0)_n >0$. Since we will choose $\eta$ small, we may assume that $\nu_n \ge \frac{(\nu_0)_n}{2}$ for all $\nu \in B_\eta(\nu_0) \cap S^{n-1}$ So, $\partial H(\nu,0)$ can be represented as a hyperplain
\begin{equation}
y_n = h(y^\prime,\nu) = -\displaystyle\frac{\nu_1}{\nu_n} y_1 - \cdots - \frac{\nu_{n-1}}{\nu_n} y_{n-1}.
\end{equation}

Let $Q^\prime_R(k^\prime) = k^\prime R + Q^\prime_R$ where  $k^\prime \in \mathbb{Z}^{n-1} \times \{0\}$ and $Q^\prime_R$ is a cube in $\R^{n-1} \times \{0\}$ centered at the origin with side length $R$. Then, for given any $m \in (\mathbb{Z} ^{n-1}\times \{0\}) \cap Q^\prime_R(k^\prime)$, we let 
\begin{equation}
t(m,\nu) = \text{ the fractional part of } h(m,\nu).
\end{equation} 
Since $\nu_0$ is irrational, from Lemma \ref{hyper-ud}, $t(m,\nu_0)$ is uniformly distributed on $\R / \mathbb{Z}$, and hence we can choose $R_0$ such that for any interval $I$ in $\R / \mathbb{Z}$ whose length is equal to $\delta/3$, there exists $m(k^\prime) \in Q^\prime(k^\prime) \cap \mathbb{Z}^{n-1} \times \{0\}$ such that $t(m,\nu_0) \in I$ for each $k^\prime$. 

Let $c(k^\prime) = h(k^\prime,\nu)- h(k^\prime,\nu_0)$ for given $\nu$. We choose $\eta > 0$ small that 
\begin{equation} \label{eq-gconti-ap}
| h(y,\nu) -h(y,\nu_0) - c(k^\prime) |\le \frac{\delta}{3}
\end{equation}
for all $y^\prime \in Q^\prime_{R_0}(k^\prime)$ and $\nu \in B_\eta(\nu_0) \cap S^{n-1}$.

Let $I = - c(k^\prime) + (\delta/3, 2\delta/3) \in \mathbb{R}/\mathbb{Z}$ and consider the following representation 
\begin{equation}
h (m,\nu) =\left(  h(m,\nu) - h(m,\nu_0) - c(k^\prime) \right) +\left( (h(m,\nu_0) + c(k^\prime) \right).
\end{equation}

From \eqref{eq-gconti-ap}, $\left( h(m,\nu) - h(m,\nu_0) - c(k^\prime) \right)$ is less than equal to $\delta/3$ and, from the choice of $m$, the fractional part of $\left( (h(m,\nu_0) + c(k^\prime) \right)$ is between $\delta/3$ and $2\delta/3$. So, the fractional part $t(m,\nu)$ of $h(m,\nu)$ is less than equal to $\delta < 1$. 

Now choose $\hat y(k^\prime, \nu) = (m,h(m,\nu))$ then we are done.
\end{proof}

\begin{lemma} \label{gconti-1}
Suppose that $w_\e(y;x_0,\nu)$ are the solutions of the equation \eqref{eq-cor1}. For any $\delta>0$, there is a constant $t_0 >0$ such that,  if $t\ge t_0$, then the estimate 
\begin{equation} 
|  w_\e(y^\prime + t \nu;x_0,\nu) - \overline g(x_0,\nu) | \le C\|g\|_{\mathcal C^2} \delta
\end{equation}
holds for all $y^\prime \in H(\nu,0)$ and for all irrational $\nu$ contained in $B_\eta(\nu_0) \cap S^{n-1}$.
Here, $\eta$ is chosen same in the Lemma \ref{gconti-ap}, $C$ is a constant depending only on $n$, $\lambda$ and $\Lambda$, and $t_0$ is a constant depending only on $n$, $\lambda$, $\Lambda$, the radius $\eta$, the direction $\nu_0$ and $\delta$ . 
\end{lemma}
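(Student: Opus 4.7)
The plan is to reproduce the oscillation argument of Lemma \ref{hyper-osc} but make every choice uniform in $\nu$ over a small neighborhood of $\nu_0$, using the uniform almost-periodicity provided by Lemma \ref{gconti-ap}. First, given $\delta>0$, I would fix the radius $R=R(\delta,\nu_0)$ and the neighborhood $B_\eta(\nu_0)$ from Lemma \ref{gconti-ap}, so that for every irrational $\nu$ in this neighborhood and every $k'\in\mathbb Z^{n-1}$ the cube $Q_R(k',\nu)\subset\partial H(\nu,0)$ contains a point $\hat y(k',\nu)$ within distance $\delta$ of some integer point $\widetilde m\in\mathbb Z^n$.

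Next, I would repeat the argument of Lemma \ref{hyper-osc} with this $R$. Fix $\nu\in B_\eta(\nu_0)\cap\mathcal{IR}$, $y'\in\partial H(\nu,0)$ and large $t>0$. Write $y=y'+t\nu\in\Pi(t)$ and choose any other point $y_2\in\Pi(t)$. Using the corresponding $\hat y=\hat y(k',\nu)$ associated to the cube of $y_2$ and setting $\widetilde w(y)=w_\e(y-\widetilde m)$, the periodicity of $F$ and $g$ shows $\widetilde w$ solves the same corrector equation over the shifted half-space $H(\nu,t(m)e_n)$ with $t(m)\le\delta$. Lemma \ref{hyper-reg2} then gives
\begin{equation}
|w_\e(y_2)-w_\e(y_2-\hat y)|\le C\|g\|_{\mathcal C^2}\delta,
\end{equation}
exactly as in \eqref{eq-hyper-osc2}. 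Combining this with the $\mathcal C^{1,\alpha}$ bound of Lemma \ref{hyper-reg1} and the interior Hölder decay from Proposition \ref{prop-vis-osc-holder} applied on the ball $B_t(t\nu)\supset Q_{3R}(t,\nu)$, one obtains the uniform bound
\begin{equation}
W_{\e,t}=\osc_{\Pi(t)}w_\e\le 2\Bigl(\tfrac{3R}{t}\Bigr)^{\log_2(1/\gamma)}\|g\|_\infty+C\|g\|_{\mathcal C^2}\delta,
\end{equation}
with $R$ independent of $\nu\in B_\eta(\nu_0)\cap\mathcal{IR}$ and $\e$.

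Finally, by Lemma \ref{cor-lim} the limit $\lim_{t\to\infty}w_\e(y'+t\nu;x_0,\nu)$ equals $\overline g(x_0,\nu)$ and is independent of $\e$ and $y'$; together with the monotonicity of $M_t=\sup_{\Pi(t)}w_\e$ and $m_t=\inf_{\Pi(t)}w_\e$ from Lemma \ref{hyper-limit}, we have $m_t\le\overline g(x_0,\nu)\le M_t$ and hence $|w_\e(y'+t\nu)-\overline g(x_0,\nu)|\le W_{\e,t}$. Choosing $t_0=t_0(n,\lambda,\Lambda,\eta,\nu_0,\delta)$ so that $2(3R/t_0)^{\log_2(1/\gamma)}\|g\|_\infty\le\|g\|_{\mathcal C^2}\delta$ yields the stated estimate for all $t\ge t_0$, uniformly in $\e$, in $y'\in\partial H(\nu,0)$ and in $\nu\in B_\eta(\nu_0)\cap\mathcal{IR}$.

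The only nontrivial point is securing the uniformity of $R$ in $\nu$; this is precisely what Lemma \ref{gconti-ap} provides. Once that is in hand, the remainder is a verbatim adaptation of the Lemma \ref{hyper-osc} argument combined with Lemma \ref{cor-lim}, and the $\e$-uniformity follows from the fact that all constants above depend only on $n$, $\lambda$, $\Lambda$, and $\|g\|_{\mathcal C^2}$, not on the particular translation $y_{0,\e}=x_0/\e$.
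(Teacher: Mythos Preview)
Your proposal is correct and follows essentially the same approach as the paper: both invoke Lemma \ref{gconti-ap} to obtain a radius $R$ valid uniformly for $\nu\in B_\eta(\nu_0)$, feed this into the oscillation estimate \eqref{eq-hyper-cor} from the proof of Lemma \ref{hyper-osc}, and then use the sandwich $m_t\le\overline g(x_0,\nu)\le M_t$ (obtained via the maximum principle/monotonicity) to bound $|w_\e(y'+t\nu)-\overline g(x_0,\nu)|$ by $W_{\e,t}$ before choosing $t_0$ to kill the first term. The only cosmetic difference is that the paper begins with the maximum-principle inequalities \eqref{eq-gconti-1}--\eqref{eq-gconti-2} and then cites \eqref{eq-hyper-cor}, whereas you re-derive the oscillation bound first and then invoke Lemma \ref{hyper-limit} and Lemma \ref{cor-lim}; the logical content is identical.
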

Note that the constant $t_0$ does not depend the choice of $\nu$. 
\begin{proof}
From the maximum principle, we have that 
\begin{equation} \label{eq-gconti-1}
\min_{ y^\prime \in \partial H(\nu,y_{0,\e}) } w_\e( y^\prime + t_0 \nu ;x_0,\nu) \le | w_\e( y^\prime + t \nu ;x_0,\nu) |\le \max_{y^\prime \in \partial H(\nu,y_{0,\e})} w_\e( y^\prime + t_0 \nu ;x_0,\nu) 
\end{equation}
for all $t$ and $t_0$ satisfying $t \ge t_0$. 
Hence we have
\begin{equation} \label{eq-gconti-2}
\min_{ y^\prime \in \partial H(\nu,y_{0,\e}) } w_\e( y^\prime + t_0 \nu ;x_0,\nu) \le \overline g(x_0,\nu) \le \max_{ y^\prime \in \partial H(\nu,y_{0,\e}) } w_\e( y^\prime + t_0 \nu ;x_0,\nu).
\end{equation}
By combining \eqref{eq-gconti-1} and \eqref{eq-gconti-2}, we have
\begin{equation}
\max_{y^\prime \in \partial H(\nu,y_{0,\e})} |  w_\e(y^\prime + t\nu;x_0,\nu) - \overline g(x_0,\nu) | \le \osc_{y^\prime \in \partial H(\nu,y_{0,\e})} w_\e( y^\prime + t_0\nu;x_0,\nu)
\end{equation}
if $t \ge t_0$. 

Choose $\eta$ same in the Lemma \ref{gconti-ap}. Then, from above and from the inequality \eqref{eq-hyper-cor} in the proof of Lemma \ref{hyper-osc}, the inequality
\begin{equation}
|  w_\e(y^\prime + t \nu) - \overline g(x_0) | \le 2 \left( \displaystyle\frac{3R}{t}\right)^{\log_2 \frac{1}{\gamma}} |w|_\infty + C \|g\|_{\mathcal{C}^2} \delta
\end{equation} 
holds for all $\nu \in B_\eta(\nu_0) \cap S^{n-1}$ where $\gamma$ and $C$ are constants same in the inequality \eqref{eq-hyper-cor} and $R$ is a constant same in the Lemma \ref{gconti-ap}.By choosing $t_0 =3R \delta^{1/\log_2\gamma}$, we get the conclusion from the fact that $|w|_\infty \le  \|g\|_{\mathcal{C}^2} $. 
\end{proof}

\begin{lemma} \label{gconti-2}
For a periodic function $g$, a direction $\nu_0$, and a constant $t>0$, there is a neighborhood $B_\eta(\nu_0)$ of $\nu_0 \in S^{n-1}$ such that if $\nu \in B_\eta(\nu_0) \cap S^{n-1}$, then 
\begin{equation}
| w_\e(t\nu;x_0,\nu) - w_\e(t\nu_0;x_0,\nu_0)| \le C \|g\|_{\mathcal C^2(\square)} \delta
\end{equation}
where $w(y;x_0,\nu)$ is the solution of the equation \eqref{eq-cor1} and $C$ is a constant depending only on $n$, $\lambda$ and $\Lambda$. 
\end{lemma}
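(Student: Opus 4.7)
\emph{Proof proposal.} Write $w^\nu:=w_\e(\cdot;x_0,\nu)$, $w^{\nu_0}:=w_\e(\cdot;x_0,\nu_0)$, and $\varphi:=g_\e(x_0,\cdot)$. The plan is to combine the global $\cC^{1,\alpha}$ bound of Lemma \ref{hyper-reg1} with a direct comparison of $v:=w^\nu-w^{\nu_0}$ on the common wedge $\Omega:=H(\nu,0)\cap H(\nu_0,0)$, closed off by a barrier of the type built in Lemma \ref{cor-approx2}. For $\nu\cdot\nu_0>1/2$, both $t\nu$ and $t\nu_0$ lie in $\Omega$, so the triangle inequality
\begin{equation*}
|w^\nu(t\nu)-w^{\nu_0}(t\nu_0)|\le |v(t\nu)|+|w^{\nu_0}(t\nu)-w^{\nu_0}(t\nu_0)|
\end{equation*}
is meaningful; the second term is at most $C\|g\|_{\cC^2}\,t\,|\nu-\nu_0|$ by Lemma \ref{hyper-reg1}, which is harmless once $\eta\le\delta/(2t)$.

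For the main term, I would truncate to $\Omega_R:=\Omega\cap B_R(0)$ and use $v\in\cS(\lambda/n,\Lambda)$ (Theorem \ref{thm-vis-diff}). On the flat face $\partial H(\nu,0)\cap H(\nu_0,0)\cap\overline{B_R}$ one has $w^\nu=\varphi$; for a point $y$ there, let $y''$ be the orthogonal projection of $y$ onto $\partial H(\nu_0,0)$, so that $|y-y''|=|y\cdot\nu_0|=|y\cdot(\nu_0-\nu)|\le R|\nu-\nu_0|$ (using $y\cdot\nu=0$). Combining Lemma \ref{hyper-reg1} with the Lipschitz bound on $\varphi$ gives
\begin{equation*}
|v(y)|\le |\varphi(y)-\varphi(y'')|+|w^{\nu_0}(y'')-w^{\nu_0}(y)|\le C\|g\|_{\cC^2}R|\nu-\nu_0|,
\end{equation*}
and by symmetry the same bound holds on $\partial H(\nu_0,0)\cap H(\nu,0)\cap\overline{B_R}$. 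On the spherical cap $\partial B_R\cap\Omega$ only the trivial bound $|v|\le 2\|g\|_\infty$ is available.

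Since $\nu\cdot\nu_0$ is close to $1$, $t\nu$ lies at distance $\ge t/2$ from each flat face and $\ge R-t$ from $\partial B_R$. A barrier argument in the spirit of Lemma \ref{cor-approx2}, namely a supersolution of $\cM^+$ that vanishes on the two flat faces of $\Omega_R$ and dominates $\|g\|_\infty$ on the spherical cap, would yield a wedge-decay estimate of the form
\begin{equation*}
|v(t\nu)|\le C\|g\|_{\cC^2}R|\nu-\nu_0|+C\|g\|_{\cC^2}(t/R)^{\beta}
\end{equation*}
for some $\beta=\beta(n,\lambda,\Lambda)\in(0,1)$. Given $\delta>0$, first fix $R=R(t,\delta)$ so large that $(t/R)^\beta\le\delta/2$, then shrink $\eta\le\delta/(2(R+t))$ so that $R|\nu-\nu_0|$ and $t|\nu-\nu_0|$ are both $\le\delta/2$ for $\nu\in B_\eta(\nu_0)\cap S^{n-1}$; summing the contributions yields the claim.

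The principal obstacle is the wedge barrier. A naive radial ansatz $|y|^\gamma$ is a $\cM^+$-supersolution only when $\gamma<1-(n-1)\Lambda/\lambda$, which is incompatible with the paper's standing assumption that $\Lambda/\lambda$ be large. The barrier must instead be constructed by combining quadratic profiles perpendicular to each flat face, mirroring the explicit construction in Lemma \ref{cor-approx2}; the large-ratio regime is precisely what gives the cross-term in $\cM^+$ the desired sign, and this is where the paper's hypothesis on $\Lambda/\lambda$ enters.
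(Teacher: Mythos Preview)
Your overall strategy coincides with the paper's: split via the triangle inequality, control $w^{\nu_0}(t\nu)-w^{\nu_0}(t\nu_0)$ by the Lipschitz bound of Lemma~\ref{hyper-reg1} with $\eta\le\delta/t$, and estimate $v=w^\nu-w^{\nu_0}$ on a truncated region (the paper uses a box $Q_M$, you use a ball $B_R$) by combining a small bound on the flat boundary pieces with a barrier of Lemma~\ref{cor-approx2} type to absorb the trivial bound on the far boundary. The order in which you fix $R$ and $\eta$ is the same as the paper's choice of $M$ and $\eta$.

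Your final paragraph, however, is misdirected on two points. First, no genuine ``wedge barrier'' is needed. You have already shown $|v|\le C\|g\|_{\cC^2}R|\nu-\nu_0|$ on \emph{both} flat faces of $\Omega_R$; hence after adding this constant it suffices to find a supersolution of $\cM^+$ that is nonnegative on $\Omega_R$, vanishes on \emph{one} flat face, and dominates $1$ on the far boundary. The explicit quadratic $\widetilde h_+$ of Lemma~\ref{cor-approx2}, built relative to (say) $\partial H(\nu_0,0)$, already has all these properties: it is $\ge 0$ throughout the slab, so in particular on the second flat face $\partial H(\nu,0)\cap H(\nu_0,0)$. This yields the linear decay $|v(t\nu)|\le C\|g\|_{\cC^2}R|\nu-\nu_0|+C\|g\|_{\cC^2}(t/R)$ with no extra work. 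Second, the hypothesis on $\Lambda/\lambda$ plays no role in this lemma; the condition $(n-1)\lambda>\Lambda$ appears only later (Lemma~\ref{overg-finite}) to ensure stability under finite boundary perturbations, and the barrier of Lemma~\ref{cor-approx2} is valid for arbitrary $0<\lambda\le\Lambda$.
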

\begin{proof}
Let $Q_M(0)$ be a large box of $\R^n$ centered at the origin with side length $M > 10t$. For given any $\delta$, we choose $\eta$ so small that $| w_\e(y;x_0,\nu) - w_\e(y;x_0,\nu_0)| \le \delta $ if $y \in Q_M(0) \cap \partial \Large( H(\nu,0) \cap H(\nu_0, 0) \Large)$. Note that as long as $M$ is chosen, we always find such a $\eta$ because of the regularity result in the Lemma \ref{hyper-reg1}. 

For a given $t>0$, there is a large $M$ such that 
\begin{equation}| w_\e(t\nu;x_0,\nu) - w_\e(t\nu;x_0,\nu_0) | \le 2 \delta
\end{equation}
from a similar argument as Lemma \ref{cor-approx2}.

By choosing $\eta \le \delta / t$, we have $|t\nu - t\nu_0| \le \delta$. From this and from the Lemma \ref{hyper-reg1}, we have
\begin{equation} \label{cor-approx22}
| w_\e(t\nu;x_0,\nu_0) -w_\e(t\nu_0;x_0,\nu_0) | \le C \|g\|_{\mathcal C^2 (\square)} \delta
\end{equation}
and then, we get the conclusion by combining above two inequalities.
\end{proof}

\begin{proof}[Proof of Proposition \ref{prop-gconti}]

For fixed direction $\nu$ and $x_0 \in \partial D$, let $\widetilde w(y) = \widetilde w(y;x_0;\nu)$ be a solution of the equation \eqref{eq-cor1} with $y_{0,\e} =0$. Because of the Lemma \ref{cor-lim}, we have
\begin{equation}
\lim_{t \rightarrow \infty} \widetilde w(t\nu;x_0,\nu) =\overline g(x_0,\nu)
\end{equation}
as long as $\nu$ is irrational. 

From the assumption \eqref{con-g}, it follows that 
\begin{equation}
\sup_y |g(x_1,y) - g(x_2,y)| \le \|g\|_{\mathcal C^1} |x_1-x_2|
\end{equation}
for any $x_1$ and $x_2 \in \partial D$. 
Hence, from the maximum principle in \cite{CLV}, we have
\begin{equation}
|\widetilde w(t\nu;x_1,\nu) - \widetilde w(t\nu;x_2,\nu)| \le \|g\|_{\mathcal C^1} |x_1-x_2|
\end{equation}
for all $t>0$. The conclusion comes by taking $t \rightarrow \infty$. 

Secondly, fix a point $x_0$ and $\delta$. We also fix a irrational $\nu_0$. Then, from Lemma \ref{gconti-1}, there exists a constant $\eta_1$ and $t_0$ such that 
\begin{equation} 
| w_\e(t \nu;x_0,\nu) - \overline g(x_0,\nu) | \le C\|g\|_{\mathcal C^2} \delta
\end{equation}
holds for all $t\ge t_0$ and for all irrational $\nu \in B_{\eta_1} (\nu_0) \cap S^{n-1}$ where $w_\e(y;x_0,\nu)$ is the solution of the equation \eqref{eq-cor1}.

Now by applying Lemma \ref{gconti-2} when $t=t_0$, there is $\eta_2>0$ such that if $\nu$ is irrational and satisfies $\nu \in B_{\eta_2}(\nu_0) \cap S^{n-1}$, then
\begin{equation}
| w(t_0\nu;x_0,\nu) - w(t_0\nu_0;x_0, \nu_0)| \le C \|g\|_{\mathcal C^2(\square)} \delta
\end{equation}
holds. 
Choose $\eta = \min\{\eta_1, \eta_2\}$ Then the following holds for all $\nu \in B_{\eta}(\nu_0) \cap S^{n-1}$,
\begin{equation} \begin{aligned}
| \overline g(x_0,\nu) - \overline g(x_0,\nu_0) | &\le | \overline g(x_0,\nu) - w(t \nu;x_0,\nu) | + | w(t \nu;x_0,\nu)  - w(t \nu_0;x_0,\nu_0) | \\
&\quad+ | w(t \nu_0;x_0,\nu_0) - \overline g(x_0,\nu_0) | \\
&\le C \|g\|_{\mathcal C^2 (\square)} \delta.
\end{aligned}\end{equation}
Here $C$ is a constant depending only on $n$, $\lambda$ and $\Lambda$. So it is continuous. 
\end{proof}

%%%%%%%%%%%%%%%%%%%%%%%%%%%%%%%%%%%%%%%%%%%%%%%%%%%%%
\section{The Effective Solution} \label{sec-overg}
%%%%%%%%%%%%%%%%%%%%%%%%%%%%%%%%%%%%%%%%%%%%%%%%%%%%%
We find a ¡°Effective Boundary Data¡± $\overline g$ in Section \ref{sec-cor}, and we showed that $\overline g$ is continuous on $\partial D \setminus \{x : \nu(x) \in \mathcal R \}$ in Section \ref{sec-gconti}. Nevertheless it could be discontinuous on $\partial D$. As long as we know, there is no concept of the viscosity solution when the boundary data is not continuous. That is why we define the generalized concept of the solution of the equation \eqref{eq-limit-2}. 

Let $D = \{ (x^\prime ,x_n) \in \R^n : |x^\prime|^2 + (x_n-1)^2 < 1 \}$, $g(y) = \cos y_n$ and $u_\e$ is the solution of the following equation,
\begin{equation} \label{eq-overg-ex} \begin{cases}
\Delta u_\e(x) = 0 &\text{ in } D,\\
u_\e = g(x/\e) &\text{ on } \partial D.
\end{cases} \end{equation}
By the calculation, we obtain
\begin{displaymath}
\overline g(x) = \left\{ \begin{array}{ll}
 1, &\text{ if } x =(0,0),\\
 \text{undefined,} &\text{ if } x=(0,2),\\
 0, &\text{ otherwise} 
 \end{array} \right.
\end{displaymath}

From the result in \cite{LS}, $u_\e$ converges to $0$ uniformly on every compact set. However, because of the Lemma \ref{cor-approx3}, we have 
\begin{equation} \label{eq-overg-ex}
| u_\e(0,\e^q) -w_\e(0,\e^{q-1};0,(0,1)) | = |  u_\e(0,\e^q) - 1 | \le C \e^{2p-1}
\end{equation}
where $p, q$ are constants satisfies the condition in Lemma \ref{cor-approx3}. It implies that $u_\e$ is very close to $1$ near the origin. This example tells us that the value of $u_\e$ in a small neighborhood of a point $0 \in \partial D$ does not make any serious oscillation at the interior.

\begin{definition} \label{def-overg-finite}
Let $x_1, x_2, \cdots, x_k $ be finite points on $\partial D$ and $g_m$, $m=1,2,\cdots$ be uniformly bounded and continuous functions on $\partial D$ supported only on $\partial D \cap \left( \bigcup_{i1,2,\cdots,k} B_{r_m} (x_i) \right)$ where $r_m$ is a sequence of constants converging to $0$. Let $b^\pm_m$ be solutions of the equation 
\begin{equation} \label{eq-overg-finite} \begin{cases}
\cM^\pm(D^2 b^\pm_m(x),\lambda,\Lambda) = 0 &\text{ in } D,\\
b^\pm_m(x) = g_m(x) &\text{ on } \partial D
\end{cases} \end{equation}
where $\cM^\pm$ is the Pucci operator with elliptic constants $\lambda$ and $\Lambda$. 
Then we say that a Dirichlet problem \eqref{eq-main} is in the stable  class for finite values of the boundary if $b^\pm_m$, the solution of the equation \eqref{eq-overg-finite} converge to zero for all compact subset $K$ of $D$. 
\end{definition}

We note that, by using \eqref{eq-overg-ex} and the Lemma \ref{gconti-dom}, it can be shown that the above condition is necessary to guarantee the uniqueness of the limit $u_0$ of $u_\e$. 

\begin{lemma} \label{overg-finite}
Suppose that $(n-1)\lambda > \Lambda$. Then, \eqref{eq-main} is stable for the finites value of the boundary. 
\end{lemma}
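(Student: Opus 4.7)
The plan is to construct a single radial barrier $v$ that is a classical supersolution of the maximal Pucci equation $\mathcal{M}^+(D^2 u)=0$, dominates $|g_m|$ on $\partial D$, and decays to zero on compact subsets of $D$ as $r_m\to 0$. The hypothesis $(n-1)\lambda>\Lambda$ exists precisely so that the standard radial power barrier is available for $\mathcal{M}^+$.

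First I would carry out the standard eigenvalue computation. For $w(x)=|x|^{-\alpha}$ with $\alpha>0$, the Hessian $D^2 w$ has a single radial eigenvalue $\alpha(\alpha+1)|x|^{-\alpha-2}$ and $(n-1)$ tangential eigenvalues each equal to $-\alpha|x|^{-\alpha-2}$, so
\begin{equation*}
\mathcal{M}^+(D^2 w)=\alpha|x|^{-\alpha-2}\bigl[\Lambda(\alpha+1)-\lambda(n-1)\bigr].
\end{equation*}
Under the hypothesis, the interval $\bigl(0,\lambda(n-1)/\Lambda-1\bigr]$ is nonempty, and any $\alpha$ in it makes $w$ a classical supersolution of $\mathcal{M}^+=0$ on $\R^n\setminus\{0\}$. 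Writing $M:=\sup_m\|g_m\|_\infty$ and
\begin{equation*}
v(x):=M\sum_{i=1}^{k}\bigl(r_m/|x-x_i|\bigr)^{\alpha},
\end{equation*}
the subadditivity of $\mathcal{M}^+$ (a supremum of linear functionals is sublinear) yields $\mathcal{M}^+(D^2 v)\le 0$ on $D\setminus\{x_1,\ldots,x_k\}$, while $v\ge M\ge|g_m|$ on each $\partial D\cap B_{r_m}(x_i)$ and $v\ge 0=g_m$ elsewhere on $\partial D$, so $v\ge|g_m|$ on all of $\partial D$.

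Comparison on the perforated domain $D\setminus\bigcup_i B_\rho(x_i)$ (with $\rho$ small enough that $v\ge M$ on every inner sphere) then gives $b_m^+\le v$ via Lemma \ref{vis-com}. For the matching lower bound I would use that
\begin{equation*}
\mathcal{M}^+\bigl(D^2(b_m^++v)\bigr)\le\mathcal{M}^+(D^2 b_m^+)+\mathcal{M}^+(D^2 v)\le 0
\end{equation*}
by subadditivity in the viscosity sense, with non-negative boundary values $g_m+v\ge 0$; comparing $b_m^++v$ against the constant $0$ via Lemma \ref{vis-com} yields $b_m^+\ge-v$. For $b_m^-$, the identity $b_m^-=-\widetilde b$, where $\widetilde b$ solves $\mathcal{M}^+(D^2\widetilde b)=0$ with boundary data $-g_m$ (immediate from $\mathcal{M}^-(A)=-\mathcal{M}^+(-A)$), lets the same barrier $v$ control $|b_m^-|$. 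Finally, for a compact $K\subset D$ with $d_0:=\min_i\dist(K,x_i)>0$, one has $|b_m^\pm(x)|\le v(x)\le kM(r_m/d_0)^\alpha\to 0$ on $K$ as $r_m\to 0$, which is the required stability. The main (and essentially only) obstacle is the opening computation: the sign condition $(n-1)\lambda>\Lambda$ is exactly what produces a positive exponent $\alpha$, and without it no power-type supersolution of $\mathcal{M}^+$ exists.
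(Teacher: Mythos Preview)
Your proof is correct and follows essentially the same approach as the paper: both use the radial power barrier $|x|^{-\alpha}$ with $\alpha=(n-1)\lambda/\Lambda-1$ (your interval endpoint) as a supersolution of $\mathcal{M}^+$, and both invoke subadditivity of $\mathcal{M}^+$ to handle the sum over the $k$ boundary points. The only differences are cosmetic: you are more explicit about the Hessian eigenvalues and the perforated-domain comparison near the singular boundary points, and you obtain the lower bound on $b_m^+$ and the control of $b_m^-$ via subadditivity and the identity $\mathcal{M}^-(A)=-\mathcal{M}^+(-A)$, whereas the paper bounds $b_m^-$ directly with the $\mathcal{M}^-$-barrier $-|x|^{-\alpha}$ and closes with $b_m^-\le b_m^+$.
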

When the operator is Laplacian, $\Lambda / \lambda =1$ hence the condition $(n-1)\lambda > \Lambda$ satisfies, but if the operator is degenerate, then $(n-1)\lambda > \Lambda$ cannot satisfies. So, this condition implies that the operator is ¡°sufficiently uniformly elliptic¡±. 

\begin{proof}
Let us assume that $k=1$, $x_1=0$ and $|g_m|_\infty \le 1$ for all $m \in \mathbb{Z}$. From $(n-1)\lambda > \Lambda$, there is a homogeneous solution $h(x) = 1/|x|^\alpha$, $\alpha = (n-1)( \lambda / \Lambda) -1 >0$, of $\cM^+$. Since $g \le r_m^\alpha h(x) = r_m^\alpha / |x|^\alpha$ and $ r_m^\alpha / |x|^\alpha$ is a solution of $\cM^+$, $b^+_m \le  r_m^\alpha / |x|^\alpha$ because of the comparison. Hence we have
\begin{equation}
\sup_K b^+_m  \le \displaystyle\frac{r_m^\alpha}{\dist(x_1, K)^\alpha}.
\end{equation}
Similarly, we can show that 
\begin{equation}
\inf_K b^-_m  \ge -\displaystyle\frac{r_m^\alpha}{\dist(x_1, K)^\alpha}
\end{equation}
by using a homogeneous solution $-1/|x|^\alpha$ of $\cM^-$.
So,$ \| u^\pm_m \|_{L^\infty(K)}$ converges to 0 as $r_m$ converges to 0.  

Now let us consider the case when $k >1$. Since we consider the limit behavior as $r_m \rightarrow 0$, we may assume that $B_{r_m}(x_i)$ are mutually disjoint. Let $h^i_m(x) = r_m^\alpha / |x-x_i|^\alpha$ and $h_m(x) = \sum_i h^i_m(x)$. Then we have
\begin{equation}
\cM^+(D^2 h_m) \le \sum_i \cM^+(D^2 h^i_m) = 0.
\end{equation}
Since it is obvious that $b^+_m(x) \le h_m(x)$ on $\partial D$, we have $b_m^+(x) \le h_m(x)$ in $D$. So, we have
\begin{equation}
\sup_K b^+_m(x) \le \displaystyle\frac{r_m^\alpha}{\sum_{i=1}^k \dist(x_i, K)^\alpha}.
\end{equation}
Similarly, we obtain 
\begin{equation}
\inf_K b^-_m(x) \ge -\displaystyle\frac{r_m^\alpha}{\sum_{i=1}^k \dist(x_i, K)^\alpha}.
\end{equation}

By combining above two inequalities and the fact that $b_m^-(x) \le b_m^+(x)$, we get the conclusion. 
\end{proof}

Through this section, we assume that \eqref{eq-main} is in the stable class for finite values of the boundary. Let us consider the following equation 
\begin{equation} \label{eq-overg-main} \begin{cases}
\overline F(D^2 \overline u(x)) = \overline f(x) &\text{ in } D,\\
\overline u(x) = \overline g(x) &\text{ on } \partial D
\end{cases} \end{equation}
where $\overline g(x)=\overline g(x;-\nu(x))$ is the function defined as in the Definition \ref{def-cor-overg} on every irrational point $x \in \partial D$.

It is same with the equation \eqref{eq-limit-2}. We just rewrite it. It is well known that the function $\overline F$ and $\overline f$ is well defined. It is also well-known that $\overline F$ is uniformly elliptic with the same elliptic constant $\lambda$ and $\Lambda$. So, as long as $\overline g$ is continuous, the above problem is well-posed. 

For simplicity, we assume the the function $g$ in the equation \eqref{eq-main} is independent on $x$ variable. We note that it does not make any serious change of proofs. For given any rational direction $\nu$, there is a integer point $m =(m_1,m_2,\cdots,m_n)$ which is parallel to $\nu$ such that there are no common integer factors of $m_1, \cdots m_n$ greater than $1$. Let 
\begin{equation}
\mathcal D_\delta = \{\nu \in \mathcal R : m_i > 1/\delta \text{ for some } i=1,2,\cdots, n \}. 
\end{equation}
We note that there are only finite rational directions which are not contained in $\mathcal D_\delta$ for given $\delta>0$. 

If $\nu_0 \in \mathcal D_\delta$, then $\nu_0$ has similar properties that the irrational direction has for given $\delta>0$. So, we have the following which is very similar to the Lemma \ref{gconti-ap}. 
\begin{lemma} \label{overg-ud}
Suppose that $\nu_0 \in \mathcal D_\delta$. Then there exist a neighborhood $B_\eta(\nu_0)$ and a constant $R >0$ which does not depend on $\nu$ but $\delta$ and $\nu_0$ such that for any given $\nu \in B_\eta(\nu_0) \cap S^{n-1}$ and $k^\prime \in \mathbb Z^{n-1}$, there is a point $\hat y(k^\prime, \nu) \in Q_R(k^\prime,\nu)$ satisfying $|\hat y(k^\prime, \nu) - \widetilde m| \le \delta$ for some $\widetilde m \in \mathbb Z^n$ where $Q_R(k^\prime,\nu)$ is defined same as in Lemma \ref{gconti-ap}.
\end{lemma}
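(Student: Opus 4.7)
The plan is to follow the architecture of the proof of Lemma \ref{gconti-ap}, but to replace Weyl equidistribution of the fractional parts (which fails for rational $\nu_0$) by an exact discrete count forced by the hypothesis $\nu_0\in\mathcal D_\delta$. After relabelling axes and flipping signs, I may assume that the primitive integer vector $m^*=(m_1,\dots,m_n)\in\mathbb Z^n$ with $\nu_0=m^*/|m^*|$ satisfies $m_n>1/\delta$; in particular $(\nu_0)_n>0$ and $\partial H(\nu,0)$ is the graph $y_n=h(y',\nu)=-\sum_{i<n}\tfrac{\nu_i}{\nu_n}y_i$ used in Lemma \ref{gconti-ap}.

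The first analytic step pins down the structure of the fractional parts at $\nu=\nu_0$. For $y=(y_1,\dots,y_{n-1})\in\mathbb Z^{n-1}$, $h(y,\nu_0)=-\tfrac{1}{m_n}\sum_{i<n}m_iy_i\in\tfrac{1}{m_n}\mathbb Z$, and the primitivity $\gcd(m_1,\dots,m_n)=1$ makes the linear map $\phi\colon\mathbb Z^{n-1}\to\mathbb Z/m_n\mathbb Z$, $\phi(y)=-\sum_{i<n}m_iy_i\bmod m_n$, surjective with kernel a sublattice of index $m_n$. Hence $\{t(m,\nu_0):m\in\mathbb Z^{n-1}\}=\{0,1/m_n,\dots,(m_n-1)/m_n\}$, a set whose spacing $1/m_n$ is strictly less than $\delta$. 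I would then choose $R=R(\delta,\nu_0)$ large enough that every cube $Q'_R(k')\subset\R^{n-1}$ contains at least one lattice representative of every coset of $\ker\phi$, which is possible because each coset has covolume $m_n$ in $\R^{n-1}$. Consequently, for any $k'$ and any interval $I\subset\mathbb R/\mathbb Z$ of length at least $1/m_n$, there exists $m\in Q'_R(k')\cap\mathbb Z^{n-1}$ with $t(m,\nu_0)\in I$.

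The $\nu$-perturbation step then copies Lemma \ref{gconti-ap} almost verbatim. Setting $c(k',\nu):=h(Rk',\nu)-h(Rk',\nu_0)$, the identity $h(y',\nu)-h(y',\nu_0)-c(k',\nu)=h(y'-Rk',\nu)-h(y'-Rk',\nu_0)$ yields the bound $\le CR|\nu-\nu_0|$ for $y'\in Q'_R(k')$, with $C$ depending only on $(\nu_0)_n$. I would choose $\alpha\in(0,1/2)$ with $(1-2\alpha)m_n\delta>1$ (possible because $m_n\delta>1$), and then take $\eta<\alpha\delta/(CR)$. For $\nu\in B_\eta(\nu_0)\cap S^{n-1}$, the shifted window $I:=-c(k',\nu)+[\alpha\delta,(1-\alpha)\delta]\bmod 1$ has length $(1-2\alpha)\delta>1/m_n$, so the discrete step above produces $m\in Q'_R(k')$ with $t(m,\nu_0)\in I$. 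Writing $h(m,\nu)=h(m,\nu_0)+c(k',\nu)+\xi$ with $|\xi|\le\alpha\delta$, the fractional part of $h(m,\nu_0)+c(k',\nu)$ lies in $[\alpha\delta,(1-\alpha)\delta]$, and adding $\xi$ keeps $t(m,\nu)\in[0,\delta]$. Setting $\hat y(k',\nu)=(m,h(m,\nu))\in Q_R(k',\nu)$ and $\widetilde m=(m,h(m,\nu)-t(m,\nu))\in\mathbb Z^n$ gives $|\hat y(k',\nu)-\widetilde m|=t(m,\nu)\le\delta$, which is the claim.

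The main obstacle is conceptual rather than technical: the irrational argument of Lemma \ref{gconti-ap} exploits Weyl equidistribution and hence works for any target window of positive length, whereas here the admissible fractional parts live on a discrete grid of spacing $1/m_n$. The hypothesis $m_n>1/\delta$ is exactly the minimal density that still allows a target window of size $\delta$ to be hit, so the perturbation error $CR\eta$ and the leftover slack $(1-2\alpha)\delta-1/m_n$ must be balanced carefully. This forces the ordering ``choose $R$ from $m_n$, then $\alpha$ from $m_n\delta>1$, then $\eta$ from $\alpha\delta/(CR)$,'' so that all constants depend only on $\delta$ and $\nu_0$ as the statement requires.
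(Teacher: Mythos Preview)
Your proof is correct and follows essentially the same architecture as the paper's: both replace Weyl equidistribution by a discrete count of fractional parts forced by $\nu_0\in\mathcal D_\delta$, and both lift the conclusion to a neighborhood of $\nu_0$ by the perturbation scheme of Lemma~\ref{gconti-ap}. The only tactical difference is that the paper relabels so that $m_1>1/\delta$ and steps along the single axis $e_1$ (taking $R\ge 2m_1$ and asserting that the fractional parts $t_i$ of $h(z_0+ie_1)$ are ``uniformly distributed'' in $[0,1)$), whereas you relabel so that $m_n>1/\delta$ and use the full lattice $\mathbb Z^{n-1}$ together with surjectivity of $\phi:\mathbb Z^{n-1}\to\mathbb Z/m_n\mathbb Z$ coming from primitivity of $(m_1,\dots,m_n)$. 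Your version is somewhat more careful on this point, since the one-axis argument tacitly needs control on $\gcd(m_1,m_n)$ that primitivity of the full vector does not directly supply; your choice of the slack parameter $\alpha$ with $(1-2\alpha)\delta>1/m_n$ also makes the perturbation step cleaner than the paper's sketch.
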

\begin{proof}
We will prove the case when $m_1 > 1/\delta$ and the other case can be proved by the similar way. Since $m_1\neq \pm 1$, we always find a nonzero element among $m_2, \cdots, m_n$, so we assume also that $m_n\neq0$. We may represent the set  $\{ y \in \R^n | \nu_0 \cdot y =0\}$  by 
\begin{equation} \label{eq-overg-1}
y_n = h(y^\prime) = -\frac{m_1}{m_n} y_1 - \cdots -\frac{m_{n-1}}{m_n} y_{n-1}.
\end{equation}

Choose $R$ to satisfy $2 m_1 \le R$ and choose a sequeqnce of finite integer points $z_i$, $i=0,1,\cdots, m_1$ defined by $z_i = z_0 + i e_1$ for some $z_0 \in \R^{n-1}\times \{0\}$. Because of the choice of $R$, we may choose all the $z_i$ in the cube $Q^\prime_R(k^\prime,\nu_0)$ for fixed $k^\prime \in \mathbb Z^{n-1} \times \{0\}$. Let $t_i$ be the fractional part of $h(z_i)$. Since $\nu_0$ is rational, $t_i$ are distributed uniformy in $[0,1)$. So we can find a point $z_i$ such that $t_i \le \delta$ because of the relation $m_1 > 1/\delta$. Now let $\hat y(\nu_0,k) = (z_i, h(z(i))$ and $\widetilde m = (z_i,h(z_i) - t(z_i))$. Then we get the conclusion for fixed $\nu_0$.

The remaining of the proof is quite similar to the proof of Lemma \ref{gconti-ap} so we omit it. 
\end{proof}

Let $u_\e$ be the solution of the equation \eqref{eq-main}. By the regularity theory of the viscosity solution, \cite{CC}, we have
\begin{equation}
\|u_\e\|_{\mathcal C^{1,\alpha}(K)} \le C \left( \|f\|_{L^\infty(D)} + \|g\|_{L^\infty(\partial D)} \right)
\end{equation}
for some compact $K \subset D$. So, we always find a limit $u_0$ such that $u_{\e_j}$ converges to $u_0$ uniformly on $K$. 

\begin{theorem} \label{thm-overg-main}
Let $g(y)$ be function which is same in the equation \eqref{eq-main}, $\overline g(x)=\overline g(x;-\nu(x))$ be function defined on $\{x \in \partial D ; -\nu(x) \in \mathcal{IR}\}$, and $g^+(x) \in \mathcal C^0(\partial D)$ be a function satisfying $\overline g(x) \le g^+(x)$ for all $x$ in $\{ x \in \partial D :  -\nu(x) \in \mathcal{IR}\}$.  Suppose that $u^+$ is the solution of the equation \eqref{eq-overg-main}, when the boundary data is $g^+(x)$. Then 
\begin{equation}
u_0 \le u^+ \text{ on } K
\end{equation}
for all compact subset $K$ of $D$ where $u_0$ is any limit of subsequence of $u_\e$. 
\end{theorem}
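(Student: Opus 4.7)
The strategy is to build, for every small $\mu>0$, a function $\Phi_\e^\mu$ on $\overline D$ that dominates $u_\e$ and whose every subsequential limit (in $\e$ then $\mu$) is $\le u^+$ on compact subsets of $D$. The upper barrier is made by gluing boundary-layer correctors from Section \ref{sec-cor} to a solution of an interior homogenization problem whose Dirichlet data is a continuous function bounded above by $g^+$, and then comparing via Lemma \ref{vis-com}. Since $\overline g$ is continuous on the irrational subset of $\partial D$ (Proposition \ref{prop-gconti}) and $g^+$ is continuous on all of $\partial D$ with $g^+\ge \overline g$ on the irrational points, one can pick continuous functions on $\partial D$ that lie between $\overline g$ and $g^+$ and are compatible with the corrector limits; this is the candidate boundary datum for the homogenized barrier problem.

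The boundary layer is built pointwise by a covering argument. For each irrational $x_0\in\partial D$ and the inward normal $\nu=-\nu(x_0)$, Lemma \ref{cor-approx3} gives, for $\tfrac12<p<q<1$ with $2p-1\le q-p$,
\begin{equation*}
\bigl|\,u_\e(x)-\e\cdot 0- w_\e\!\bigl(x/\e;x_0\bigr)\bigr|\ \le\ C\,\e^{2p-1}\qquad\text{on }B_{\e^q}(x_0)\cap D,
\end{equation*}
while Lemmas \ref{cor-lim} and \ref{gconti-1} and the uniform oscillation bound \eqref{eq-cor-osc} show that $w_\e(x/\e;x_0)$ is within $o(1)$ of $\overline g(x_0)$ as soon as $\dist(x,\partial D)\gg\e$, uniformly for $\nu$ in a neighbourhood of any fixed irrational direction. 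Combined with the continuity of $\overline g$ and $g^+$, this yields $u_\e(x)\le g^+(\pi(x))+o(1)$ on the inner boundary $\partial D_\e:=\{\dist(\cdot,\partial D)=\mu\}$ near irrational points, where $\pi$ denotes nearest-point projection onto $\partial D$.

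To treat the rational points of $\partial D$ (countably many by IDDC), we split them using the sets $\mathcal D_\delta$ from Section \ref{sec-overg}. For any $\delta>0$, Lemma \ref{overg-ud} supplies the same uniform-almost-periodicity estimate for directions in $\mathcal D_\delta$ that Lemma \ref{gconti-ap} gave for irrational directions, so the corrector bound of the previous paragraph extends to rational points whose normal lies in $\mathcal D_\delta$; the upper semicontinuous envelope of $\overline g^*$ at such points remains $\le g^+$ after shrinking $\delta$. Only finitely many points $x_1,\ldots,x_k$ on $\partial D$ fail to lie in either the irrational set or $\mathcal D_\delta$, and here we cover them by balls $B_{r_m}(x_i)$ with $r_m\to 0$ and absorb their contribution into a Pucci barrier $b_m^+$ as in Definition \ref{def-overg-finite}. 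By the hypothesis that \eqref{eq-main} is stable for finite values of the boundary, $b_m^+\to 0$ uniformly on each compact $K\subset D$.

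With these ingredients in hand, we compare on the shrinking interior region $D_\mu=\{x\in D:\dist(x,\partial D)>\mu\}$. On $\partial D_\mu$ we have $u_\e\le G_\e+o(1)$, where $G_\e$ is a continuous function converging uniformly (in $\e\to 0$, then $\mu\to 0$) to $g^+$ on $\partial D$; inside $D_\mu$ we invoke Evans' interior homogenization result \cite{E} to pass to the limit $\overline F(D^2 v)=\overline f$ with boundary datum $G_\e$, whose limit as $\mu\to 0$ is $u^+$ by continuous dependence. Adding the barrier $b_m^+$ and applying Lemma \ref{vis-com} in $D_\mu$ gives $u_\e\le v_\e+b_m^+$, and passing to the limit along the subsequence defining $u_0$ yields $u_0\le u^+$ on $K$. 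The main obstacle is Step three: making the localization around the finitely many ``truly rational'' points uniform in $\e$ while simultaneously controlling the corrector error from Lemma \ref{cor-approx3} on the thin annulus $D\setminus D_\mu$; the stability hypothesis is exactly what is needed to close this gap.
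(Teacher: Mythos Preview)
Your plan lines up with the paper's approach almost ingredient-for-ingredient: corrector estimates from Lemma \ref{cor-approx3}, the $\mathcal D_\delta$ splitting of the rational directions (Lemmas \ref{overg-ud}, \ref{overg-1}, \ref{overg-2}), the Pucci barrier $b_m^+$ from the stability hypothesis to swallow the finitely many bad points, and interior homogenization \cite{E} to finish. The paper also begins with the convenient reduction $g^+\ge \overline g+\sigma$ and only lets $\sigma\to 0$ at the end, which you might find helpful.

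There is, however, a real scale gap in your plan. You propose to fix $\mu>0$, show $u_\e\le G_\e+o(1)$ on $\partial D_\mu$, and then send $\e\to 0$ followed by $\mu\to 0$. But Lemma \ref{cor-approx3} only asserts $|u_\e-w_\e(\cdot/\e;x_0)|\le C\e^{2p-1}$ on $B_{\e^q}(x_0)\cap D$; for fixed $\mu$ and small $\e$ the set $\partial D_\mu$ sits \emph{outside} these balls (since $\mu\gg\e^q$), so you have no direct comparison between $u_\e$ and the corrector there. Your correct observation that $w_\e(x/\e;x_0)\to\overline g(x_0)$ once $\dist(x,\partial D)\gg\e$ is about the corrector, not about $u_\e$, and the two are only tied together inside the $\e^q$-ball. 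So ``$u_\e\le G_\e+o(1)$ on $\partial D_\mu$'' is not justified as stated.

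The paper closes this gap by working at the shrinking scale $\mu=\e^q$. On $\partial D_{\e^q}$ the corrector comparison \emph{does} apply and yields $u_\e(x_0+\e^q\nu)\le g^+(x_0)-3\sigma/4$ away from the bad points. Then, instead of your double limit, the paper invokes Lemma \ref{gconti-dom}: it transfers the boundary values of $u_\e$ on $\partial D_{\e^q}$ back to $\partial D$ with an $O(\e^{q\alpha})$ error, compares with the solution $u^+_\e$ of $F(D^2v,x/\e)=f(x,x/\e)$ in the \emph{fixed} domain $D$ with data $g^+$, and only then appeals to interior homogenization. You should either replace your fixed-$\mu$ argument by this $\e^q$-scale step plus Lemma \ref{gconti-dom}, or else supply an independent argument propagating the $\partial D_{\e^q}$ bound inward to $\partial D_\mu$.
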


Note that the other side of inequality also holds, so we get the following. 
\begin{cor} \label{cor-overg-main}
If there is a continuous function $g_0$ such that $g_0(x) = \overline g(x)$ for all irrational points $x \in \partial D$, then the solution $u_\e$ of \eqref{eq-main} converges to $u_0$, the solution of the equation \eqref{eq-overg-main} when the boundary data is $g_0$, uniformly on every compact set $K \subset D$. 
\end{cor}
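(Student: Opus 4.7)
The plan is to sandwich every subsequential locally-uniform limit of $u_\e$ between two solutions with the same continuous boundary data $g_0$, and then upgrade subsequential convergence to convergence of the whole family via equicontinuity.

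First I would use the $\mathcal{C}^{1,\alpha}_{\mathrm{loc}}$ bounds established in Section \ref{sec-cor} (Lemma \ref{cor-reg3} for boundary regularity of $u_\e$, together with the interior estimate in Proposition \ref{prop-vis-osc-holder}) to conclude that $\{u_\e\}$ is uniformly bounded on $\overline D$ and locally equicontinuous in $D$. This makes $\{u_\e\}$ relatively compact in $\mathcal{C}(K)$ for every compact $K \subset D$. After a diagonal extraction, any sequence $\e_j \to 0$ has a subsequence along which $u_{\e_j}$ converges locally uniformly to some $u_0 \in \mathcal{C}(D)$, and by the standard viscosity stability for periodic homogenization (\cite{E}) this $u_0$ satisfies $\overline F(D^2 u_0) = \overline f$ in $D$.

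Next I would apply Theorem \ref{thm-overg-main} with $g^+ := g_0$. Since $g_0$ is continuous on $\partial D$ and equals $\overline g$ at every irrational boundary point, the hypothesis $\overline g(x) \le g^+(x)$ on $\{x \in \partial D : -\nu(x) \in \mathcal{IR}\}$ holds as an equality and is therefore trivially satisfied. The theorem then yields $u_0 \le u^+$ on $K$, where $u^+$ is the continuous viscosity solution of \eqref{eq-overg-main} with boundary data $g_0$. Invoking the symmetric lower-bound analogue alluded to just after Theorem \ref{thm-overg-main} with $g^- := g_0$ (this analogue follows by running the barrier construction in the proof of Theorem \ref{thm-overg-main} on the opposite side, using that both the Pucci operators and the class $\mathcal{S}(\lambda,\Lambda)$ are symmetric under $u \mapsto -u$) gives $u_0 \ge u^-$, where $u^-$ again solves \eqref{eq-overg-main} with boundary data $g_0$. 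Uniqueness of the continuous-boundary-data problem (Lemma \ref{vis-com}) forces $u^+ = u^- =: \overline u$, so $u_0 \equiv \overline u$ on every compact $K \subset D$.

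Since every subsequential local-uniform limit of $\{u_\e\}$ equals the same function $\overline u$ and the family is relatively compact in $\mathcal{C}(K)$, the entire family $u_\e$ converges to $\overline u$ uniformly on $K$. The main obstacle is the careful bookkeeping needed to invoke the symmetric lower-bound version of Theorem \ref{thm-overg-main}: one must be sure that the stable-class hypothesis and the localization near finite bad points (whose effect is controlled by Definition \ref{def-overg-finite}) used to prove the upper bound are invariant under $u \mapsto -u$. This is indeed the case because Definition \ref{def-overg-finite} is symmetric in $\cM^+$ and $\cM^-$ and the barrier built from $r_m^\alpha/|x-x_i|^\alpha$ in Lemma \ref{overg-finite} has an obvious sign-reversed counterpart.
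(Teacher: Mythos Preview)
Your proposal is correct and follows exactly the route the paper intends: the corollary is stated immediately after Theorem \ref{thm-overg-main} with the remark ``the other side of inequality also holds,'' and your argument simply spells out that applying the theorem and its symmetric lower-bound version with $g^+ = g^- = g_0$ pins every subsequential limit to the unique solution with boundary data $g_0$, after which equicontinuity upgrades to full convergence. The extra step verifying $\overline F(D^2 u_0)=\overline f$ via \cite{E} is not strictly needed, since Theorem \ref{thm-overg-main} already controls an arbitrary subsequential limit directly, but it does no harm.
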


For given any $x_0 \in \partial D$ satisfying $\nu_0 = -\nu(x_0) \in \mathcal D_\delta$, we let $B_\eta(\nu_0)$ and $R$ is defined same in the Lemma \ref{overg-ud}. We note that the size of the cube $R$ determines the size of oscillation. See Lemma \ref{gconti-1} or, more precisely, for all $x \in \partial D$ satisfying $\nu = -\nu(x) \in B_\eta(\nu_0)$, 
\begin{equation}
W_t(x) = \displaystyle\osc_{y \in t\nu+ \partial H(\nu,y_{0,\e})} w_\e(y;x) \le \left( \displaystyle\frac{3R}{t}\right)^{\log_2 \frac{1}{\gamma}} |g|_\infty + C \|g\|_{\mathcal{C}^2} \delta
\end{equation}
where $\gamma \in (0,1)$ be a uniform constant and $w(y;x)$ is the solution of the equation \eqref{eq-cor1} with $\nu = -\nu(x)$.

From the maximum principle, $W_t(x)$ is monotone decreasing as $t \rightarrow \infty$ and we have
\begin{equation}
w_\e(y_{0,\e}+t\nu;x,\nu) \le \overline g^*(x,\nu) + W_t(x) \le \overline g^*(x) + \left( \displaystyle\frac{3R}{t}\right)^{\log_2 \frac{1}{\gamma}} |g|_\infty + C \|g\|_{\mathcal{C}^2} \delta.
\end{equation}

From Lemma \ref{overg-ud}, we choose $R$ uniformly on the direction $\nu$ in a small neighborhood $B_\eta(\nu_0)$. So, if we choose $r>0$ small enough, we can conclude the following.
\begin{lemma} \label{overg-1}
Let $x$ be a point on $\partial D$ satisfying $\nu_0 = -\nu(x_0) \in \mathcal D_\delta$ or $\nu_0$ is irrational. Then there exist constants $r >0$ and $t_0>0$ which depend only on $n$, $\lambda$, $\Lambda$, $\nu_0$ and $\delta$ such that the following inequality,
\begin{equation}
w_\e(y_{0,\e}+t\nu;x) \le \overline g^*(x) + C_1 \|g\|_{\mathcal{C}^2} \delta,
\end{equation}
holds for all $x \in B_r(x_0) \cap \partial D$ where $\nu = -\nu(x)$ and $C_1$ is  constant dependsing only on $n$, $\lambda$ and $\Lambda$. 
\end{lemma}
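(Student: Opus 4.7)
The strategy is to combine three ingredients already set up in the paper: the uniform lattice-approximation property on hyperplanes near $\nu_0$ (Lemma \ref{gconti-ap} in the irrational case, Lemma \ref{overg-ud} in the $\mathcal D_\delta$ case), the uniform oscillation decay \eqref{eq-hyper-cor} derived inside the proof of Lemma \ref{hyper-osc}, and the monotonicity of the oscillation of $w_\e$ on parallel hyperplanes that comes from the maximum principle on the half-space \cite{CLV}.

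First I would use the $\mathcal C^2$ regularity of $\partial D$ to pick $r>0$ so that, for every $x\in B_r(x_0)\cap \partial D$, the inward normal $\nu=-\nu(x)$ satisfies $\nu\in B_\eta(\nu_0)\cap S^{n-1}$, where $\eta$ is the neighborhood size delivered by Lemma \ref{gconti-ap} (if $\nu_0\in\mathcal{IR}$) or Lemma \ref{overg-ud} (if $\nu_0\in\mathcal D_\delta$) for the prescribed $\delta$. Both lemmas provide a single cube side-length $R=R(\delta,\nu_0)$ and, for every such $\nu$ and every $k'\in\mathbb Z^{n-1}$, a point $\hat y(k',\nu)\in Q_R(k',\nu)$ within distance $\delta$ of a lattice point in $\mathbb Z^n$. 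This is precisely the input that drives the oscillation argument of Lemma \ref{hyper-osc}.

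Next I would re-run, uniformly in $x\in B_r(x_0)\cap\partial D$, the computation leading to \eqref{eq-hyper-cor} in Lemma \ref{hyper-osc}. Writing $W_t(x)=\osc_{y\in y_{0,\e}+t\nu+\partial H(\nu,0)} w_\e(y;x)$, the same chain of inequalities (periodicity of $g$ plus Lemma \ref{hyper-reg2} to shift by the near-integer translate $\hat y$, followed by the iterated interior oscillation decay from Proposition \ref{prop-vis-osc-holder}) gives
\begin{equation}
W_t(x)\ \le\ 2\Bigl(\tfrac{3R}{t}\Bigr)^{\log_2(1/\gamma)}\|g\|_\infty + C\,\|g\|_{\mathcal C^2}\,\delta
\end{equation}
for all $t$ sufficiently large, with $R$, $\gamma$ and $C$ independent of $x\in B_r(x_0)\cap\partial D$. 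I would then choose $t_0=t_0(n,\lambda,\Lambda,\nu_0,\delta)$ so that $2(3R/t_0)^{\log_2(1/\gamma)}\|g\|_\infty \le \|g\|_{\mathcal C^2}\,\delta$; this forces $W_t(x)\le C_1\|g\|_{\mathcal C^2}\,\delta$ for every $t\ge t_0$ uniformly in $x$.

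Finally, the weak maximum principle on the half-space $H(\nu,y_{0,\e}+t\nu)$ (applied to $w_\e(\cdot;x)$ versus its sup and inf on $\partial H(\nu,y_{0,\e}+t\nu)$, exactly as in the derivation of \eqref{eq-gconti-1}) shows that both $\sup_{\Pi(t)} w_\e(\cdot;x)$ and $\inf_{\Pi(t)} w_\e(\cdot;x)$ are monotone in $t$, so $\overline g^*(x)$ lies between them for every $t\ge t_0$. Combining this with the uniform oscillation bound yields
\begin{equation}
w_\e(y_{0,\e}+t\nu;x)\ \le\ \sup_{\Pi(t)} w_\e(\cdot;x)\ \le\ \overline g^*(x)+W_t(x)\ \le\ \overline g^*(x)+C_1\|g\|_{\mathcal C^2}\delta,
\end{equation}
which is the claimed inequality. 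The main obstacle, and the only place where the dichotomy $\nu_0\in\mathcal{IR}$ vs.\ $\nu_0\in\mathcal D_\delta$ matters, is guaranteeing that one fixed $R$ works for all nearby $\nu$; this is exactly what Lemmas \ref{gconti-ap} and \ref{overg-ud} have been built to do, so once they are invoked the rest is a uniformization of the already-proven Lemma \ref{hyper-osc}.
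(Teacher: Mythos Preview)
Your proposal is correct and follows essentially the same approach as the paper: the paper's argument (contained in the paragraph immediately preceding the lemma) combines the monotonicity of $W_t(x)$ from the half-space maximum principle with the uniform oscillation bound \eqref{eq-hyper-cor}, using Lemma \ref{overg-ud} (resp.\ Lemma \ref{gconti-ap}) to fix a single $R$ valid for all $\nu\in B_\eta(\nu_0)$, and then chooses $r$ so that $-\nu(x)\in B_\eta(\nu_0)$ and $t_0$ so that the decaying term is below $\|g\|_{\mathcal C^2}\delta$. Your write-up is in fact more explicit than the paper's, which simply records the chain $w_\e\le \overline g^*+W_t\le \overline g^*+(3R/t)^{\log_2(1/\gamma)}|g|_\infty+C\|g\|_{\mathcal C^2}\delta$ and then states the lemma.
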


The following two lemmas are about the behaviors of $\overline g$, $\overline g^*$ and $\overline g_*$. 
\begin{lemma} \label{overg-dconti}
For each $x_0 \in \partial D$ satisfying $-\nu(x_0) \in \mathcal D_\delta$, there exists a neighborhood $B_r(x_0)$ such that if $x_1, x_2 \in B_r(x_0)$ are irrational, then 
\begin{equation}
| \overline g(x_1) - \overline g(x_2) | \le C \| g\|_{\mathcal C^2(\square)} \delta
\end{equation}
where the constant $C$ only depends on $n$, $\lambda$ and $\Lambda$. 
\end{lemma}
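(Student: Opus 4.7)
The plan is to exploit the standing assumption of this section that $g$ is $x$-independent: by Lemma \ref{cor-lim}, the effective boundary data at an irrational point depends only on the direction, i.e.\ $\overline g(x_i) = \overline g(\nu_i)$ with $\nu_i = -\nu(x_i)$, and the limit is independent of the base point used in the corrector equation. Since $\partial D$ is $\mathcal{C}^2$, shrinking $r$ makes $|\nu_1 - \nu_2|$ arbitrarily small, so after reduction it suffices to prove
$|\overline g(\nu_1) - \overline g(\nu_2)| \le C\|g\|_{\mathcal{C}^2(\square)}\delta$
for any two irrational directions $\nu_1, \nu_2$ lying in the neighborhood $B_\eta(\nu_0)$ supplied by Lemma \ref{overg-ud}.

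I would first extend Lemma \ref{overg-1} to an arbitrary base point. Its proof rests on the hyperplane-oscillation bound
\begin{equation*}
W_t := \osc_{\Pi(t)}\widehat w_\nu \;\le\; 2\left(\frac{3R}{t}\right)^{\log_2 (1/\gamma)} |g|_\infty + C\|g\|_{\mathcal{C}^2}\,\delta,
\end{equation*}
which is obtained exactly as in Lemma \ref{hyper-osc}, now using Lemma \ref{overg-ud} in place of Lemma \ref{hyper-ap} since $\nu_0 \in \mathcal{D}_\delta$. Crucially, this estimate depends only on $\nu \in B_\eta(\nu_0)$ and not on where the boundary hyperplane is anchored. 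Combined with $\overline g(\nu) = \lim_{t\to\infty}\widehat w_\nu(y_0 + t\nu)$ from Lemma \ref{hyper-limit} and the maximum-principle sandwich behind Lemma \ref{overg-1}, this yields: for any base point $y_0 \in \R^n$, any $\nu \in B_\eta(\nu_0) \cap \mathcal{IR}$, and $t \ge t_0$ (with $t_0$ depending only on $n, \lambda, \Lambda, \nu_0, \delta$),
\begin{equation*}
\bigl|\widehat w_\nu(y_0 + t\nu) - \overline g(\nu)\bigr| \;\le\; C\|g\|_{\mathcal{C}^2(\square)}\,\delta,
\end{equation*}
where $\widehat w_\nu$ denotes the solution of \eqref{eq-cor1} on $H(\nu, y_0)$ with data $g$.

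I now fix the common base point $y_0 = 0$ and write $\widehat w_i$ for the corrector on $H(\nu_i, 0)$ with data $g$, $i = 0, 1, 2$. The extension above gives $|\widehat w_i(t_0\nu_i) - \overline g(\nu_i)| \le C\|g\|_{\mathcal{C}^2}\delta$ for $i = 1, 2$. Applying Lemma \ref{gconti-2} at the base point $x_0 = 0$ with the (possibly rational) reference direction $\nu_0$ — the statement of Lemma \ref{gconti-2} places no irrationality restriction on $\nu_0$ — yields $|\widehat w_i(t_0\nu_i) - \widehat w_0(t_0\nu_0)| \le C\|g\|_{\mathcal{C}^2}\delta$ for $i = 1, 2$ once $\eta$ is shrunk further depending on $t_0$. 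Combining both estimates via the triangle inequality gives $|\overline g(\nu_1) - \overline g(\nu_2)| \le 4C\|g\|_{\mathcal{C}^2(\square)}\delta$, which completes the proof.

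The main obstacle is really just the extension of Lemma \ref{overg-1} to an arbitrary base point, and this is essentially bookkeeping: one verifies that Lemma \ref{overg-ud} delivers uniform approximate periodicity for every $\nu$ in a fixed neighborhood of $\nu_0 \in \mathcal{D}_\delta$, so the oscillation argument of Lemma \ref{hyper-osc} runs verbatim without any use of the fact that the half-space is anchored at a boundary point of $D$. Once this observation is in hand, the rest of the proof is a clean chain of previously established lemmas.
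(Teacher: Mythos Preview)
Your proposal is correct and follows essentially the same route the paper indicates: the authors say only that the proof is ``quite similar to the proof of Proposition \ref{prop-gconti},'' and you have carried out exactly that adaptation, substituting Lemma \ref{overg-ud} for Lemma \ref{gconti-ap} to obtain a uniform $R$ near the rational direction $\nu_0\in\mathcal D_\delta$, and then running the pair Lemma \ref{gconti-1}/Lemma \ref{gconti-2} as in Proposition \ref{prop-gconti}. The one genuine wrinkle you correctly handle---that $\overline g(\nu_0)$ need not be defined since $\nu_0$ is rational, so one must route through the common value $\widehat w_0(t_0\nu_0)$ and compare $\nu_1,\nu_2$ via the triangle inequality rather than compare each to $\overline g(\nu_0)$---is the only place where the argument departs from a verbatim copy of Proposition \ref{prop-gconti}.
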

The proof of the above lemma is quite similar to the proof of Proposition \ref{prop-gconti}. See the last sentence of the proof of Proposition \ref{prop-gconti}. 

\begin{lemma} \label{overg-2}
For any rational point $x_0 \in \partial D$ satisfying $\nu(x_0) \in \mathcal D_\delta$, 
\begin{equation}
\overline g^*(x_0) \le \limsup_{x \rightarrow x_0} \overline g(x) + C_2 \delta,
\end{equation}
for some constant $C_2$ depending only on $n$, $\lambda$, and $\Lambda$.
\end{lemma}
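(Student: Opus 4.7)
The lemma is a one-sided upper semicontinuity for $\overline g^*$ at a rational point $x_0$ whose outward normal has $\nu_0 := -\nu(x_0) \in \mathcal D_\delta$. Although $\nu_0$ is rational, membership in $\mathcal D_\delta$ ensures by Lemma \ref{overg-ud} that hyperplanes orthogonal to directions in a neighborhood of $\nu_0$ share an almost-irrational equidistribution property. My plan is to transfer the limsup defining $\overline g^*(x_0)$ onto nearby irrational directions, where $\overline g$ is well defined and, by Lemma \ref{cor-lim}, independent of the base point.

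First, I would extract from Lemma \ref{overg-ud} a neighborhood $B_\eta(\nu_0) \cap S^{n-1}$ and a cube size $R$ (both depending only on $\nu_0$ and $\delta$) on which the almost-periodicity holds uniformly. Running the oscillation argument of Lemma \ref{hyper-osc} in this uniform setup yields a single threshold $t_0$ for which
\[
\osc_{\Pi(t)} w_\e(\,\cdot\,; x_0, \nu) \le C \|g\|_{\mathcal C^2} \delta
\]
whenever $t \ge t_0$, $\e > 0$, and $\nu \in B_\eta(\nu_0) \cap S^{n-1}$, the rational direction $\nu_0$ itself included. This is the same reasoning already used in Lemma \ref{overg-1}, but applied uniformly in the direction rather than in the base point.

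Next, I would select sequences $\e_j \downarrow 0$ and $t_j \uparrow \infty$ realizing $w_{\e_j}(y_{0,\e_j}(x_0) + t_j \nu_0; x_0, \nu_0) \to \overline g^*(x_0)$, and apply Lemma \ref{gconti-2} at $t = t_j$ to obtain $\eta_j > 0$ with
\[
w_{\e_j}(y_{0,\e_j}(x_0) + t_j \nu_0; x_0, \nu_0) \le w_{\e_j}(y_{0,\e_j}(x_0) + t_j \nu; x_0, \nu) + C\|g\|_{\mathcal C^2}\delta
\]
for every $\nu \in B_{\eta_j}(\nu_0) \cap S^{n-1}$. Using the $\mathcal C^2$ regularity of $\partial D$ and the IDDC (which together guarantee irrational normals in any neighborhood of $x_0$), I would choose $x_j \in \partial D$ with $x_j \to x_0$, $\nu(x_j)$ irrational, and $-\nu(x_j) \in B_{\eta_j}(\nu_0) \cap B_\eta(\nu_0)$. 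Setting $\nu = -\nu(x_j)$ and invoking the uniform oscillation bound from the first step, the quantity $w_{\e_j}(y_{0,\e_j}(x_0) - t_j \nu(x_j); x_0, -\nu(x_j))$ is within $C\|g\|_{\mathcal C^2}\delta$ of its asymptotic value $\alpha_{\e_j}(x_0, -\nu(x_j))$. Because $-\nu(x_j)$ is irrational, Lemma \ref{cor-lim} identifies this limit with $\overline g(x_0, -\nu(x_j))$; base-point independence for irrational directions (which holds here because $g$ is assumed $x$-independent in this section) gives $\overline g(x_0, -\nu(x_j)) = \overline g(x_j)$. Concatenating the three bounds yields
\[
w_{\e_j}(y_{0,\e_j}(x_0) + t_j \nu_0; x_0, \nu_0) \le \overline g(x_j) + C_2 \delta,
\]
and passing to $j \to \infty$ gives $\overline g^*(x_0) \le \limsup_j \overline g(x_j) + C_2\delta \le \limsup_{x \to x_0}\overline g(x) + C_2\delta$.

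The principal obstacle is the tension between $\eta_j \downarrow 0$, which is forced by $t_j \uparrow \infty$ in Lemma \ref{gconti-2}, and the requirement that irrational boundary normals be found in the shrinking neighborhood $B_{\eta_j}(\nu_0)$. A diagonal construction resolves this: at step $j$, first fix $\eta_j$ from Lemma \ref{gconti-2}, then use the countability of $\mathcal R \subset S^{n-1}$ together with the continuity of the Gauss map on $\partial D$ to produce $x_j$ close enough to $x_0$ that $-\nu(x_j) \in B_{\eta_j}(\nu_0)$; the resulting $x_j$ automatically converge to $x_0$.
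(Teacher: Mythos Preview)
Your argument is correct and follows the same route the paper sketches: pick a realization of $\overline g^*(x_0)$ along the rational direction $\nu_0\in\mathcal D_\delta$, use the uniform almost-periodicity from Lemma~\ref{overg-ud} to control oscillations, and then transfer via Lemma~\ref{gconti-2} to a nearby irrational direction $-\nu(x_j)$ where Lemma~\ref{cor-lim} identifies the limit with $\overline g(x_j)$. The paper's proof differs only cosmetically in that it fixes a single base point $y_0$ achieving the limsup rather than a sequence $(\e_j,t_j)$, and otherwise explicitly defers to the proof of Proposition~\ref{prop-gconti}; your diagonal selection of $x_j\to x_0$ with irrational normal is the natural way to make that sketch rigorous.
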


Since $\overline g$ is defined only on the irrational point, $\limsup_{x \rightarrow x_0} \overline g(x)$ means 
\begin{equation}
\lim_{r\rightarrow0} ~\sup_{x \in A_r} ~\overline g(x)
\end{equation}
where $A_r = \{ x \in B_r(x_0) \cap \partial D :  \nu(x) \text{ is irrational} \}$. 

Its proof is also quite similar to the proof of Proposition \ref{prop-gconti}. We only give an idea of proof. 
\begin{proof} [Sketck of the Proof of Lemma \ref{overg-2}]
Because of the definition of $\overline g^*(x_0)$, we may choose a value $y_0$ such that $\widetilde w(y;x_0)$, the solution of the equation \eqref{eq-cor1} with conditions $y_{0,\e} = y_0$ and $\nu = -\nu(x_0)$, satisfies 
\begin{equation}
\lim_{t\rightarrow\infty} \widetilde w (y^\prime-t\nu(x_0)) = \overline g^*(x_0)
\end{equation} 
for all $y^\prime \in \partial H(-\nu(x_0),y_0)$. So, by comparing $\widetilde w(y;x_0)$ with correctors $\widetilde w(y;x)$, solution of the equation \eqref{eq-cor1} with the conditions $x_0=x$, $y_{0,\e} = y_0$ and $\nu = -\nu(x)$, we get the result. 
\end{proof}

Let $D_\e = \{ x \in D ~|~ \dist(x,\partial D) > \e^q \}$ where $q \in (1/2,1)$. Since $\partial D$ is in $\mathcal{C}^2$, $\partial D_\e$ is also in $\mathcal{C}^2$, and there exists a 1-1 correspondence between $\partial D$ and $\partial D_\e$ if $\e$ is small. Let $z_\e= x + \e^q(-\nu(x)) \in \partial D_\e$, and define $\widetilde g_\e(x) = u_\e(z_\e)$ where $u_\e$ is the solution of the equation \eqref{eq-main}.
\begin{lemma} \label{gconti-dom}
Let $D_\e$, $z_\e$ and $\widetilde g_\e$ be same in the above. Then, for all $q \in (1/2,1)$, we have
\begin{equation}
\|\widetilde u_\e -u_\e \|_{L^\infty(D_\e)} \le C \e^{q\alpha}
\end{equation}
where $u_\e$ is the solution of the equation \eqref{eq-main}, $\widetilde u_\e$ is the solution of 
\begin{equation} \label{eq-gconti-dom}\begin{cases}
F(D^2 \widetilde u_\e, \frac{x}{\e}) = f(x,\frac{x}{\e}) &\text{ in } D \\
\widetilde u_\e(x) = \widetilde g_\e(x) &\text{ on } \partial D,
\end{cases} \end{equation}
$\alpha = \displaystyle\frac{\Lambda}{\lambda}(n-1) -1$, and $C$ is a constant depending only on $n$, $\lambda$, $\Lambda$ and $D$. 
\end{lemma}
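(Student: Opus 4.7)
The plan is to treat $w := \widetilde u_\e - u_\e$ as a member of a Pucci class via Theorem~\ref{thm-vis-diff} and then dominate $|w|$ in $D_\e$ by a boundary barrier built from the radial Pucci extremal of exponent $\alpha = \Lambda(n-1)/\lambda - 1$. To begin, I observe that $\widetilde u_\e$ and $u_\e$ both solve the same equation $F(D^2\cdot,x/\e)=f(x,x/\e)$ in $D$; by Theorem~\ref{thm-vis-diff} their difference $w$ lies in the Pucci class $\mathcal S(\lambda/n,\Lambda)$ in $D$. On $\partial D$ we only know $|w(x)|=|u_\e(z_\e(x))-g(x,x/\e)|\le 2\|g\|_\infty$, so the plain maximum principle gives no better than a $\|g\|_\infty$ bound and the $\e^{q\alpha}$ decay has to be extracted from a boundary barrier.

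For the barrier I would reuse the computation from the proof of Lemma~\ref{overg-finite}: a direct eigenvalue calculation on $r\mapsto r^{-\alpha}$ shows that $\mathcal M^-_{\lambda,\Lambda}(D^2|x|^{-\alpha})=0$ exactly when $\alpha=\Lambda(n-1)/\lambda - 1$. The $\mathcal C^2$-regularity of $\partial D$ produces, at every $x_0\in\partial D$, an exterior tangent ball $B_R(y_0)$ of uniform radius $R$ with $\overline{B_R(y_0)}\cap\overline D=\{x_0\}$. I would then consider the candidate barrier
\[
\Psi_{x_0}(x) \;:=\; 1-\Bigl(\frac{R}{|x-y_0|}\Bigr)^{\!\alpha},
\]
which is nonnegative on $D$, vanishes at $x_0$, and inherits the Pucci identity of $|x-y_0|^{-\alpha}$.

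The comparison step would then proceed by applying Lemma~\ref{vis-com} (the Pucci comparison principle) to $\pm w - 2\|g\|_\infty\,\Psi_{x_0}$, localized in a tubular neighborhood of $x_0$, to deduce the pointwise bound $|w(x)|\le C\|g\|_\infty\,\Psi_{x_0}(x)$ there, with $C$ depending only on $n,\lambda,\Lambda,$ and the $\mathcal C^2$-norm of $\partial D$. Translating the radial decay of $\Psi_{x_0}$ into the geometric decay in $\dist(x,\partial D)$ and specializing to $x\in D_\e$ yields $|w(x)|\le C\e^{q\alpha}$ in the tubular strip $\{\dist(\cdot,\partial D)<R/2\}\cap D$; the estimate is then propagated to the rest of $D_\e$ through the maximum principle applied inside $D_\e$ to the Pucci-class function $w$.

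The main technical obstacle is the step of turning the local-at-each-$x_0$ Hopf-type estimate into a single uniform bound on all of $D_\e$ with the sharp exponent $\alpha$: since $\mathcal M^\pm$ are only sub- or super-additive, the individual barriers $\Psi_{x_0}$ cannot be naively summed into one global barrier belonging to the correct Pucci side. The resolution rests on the uniformity of $R$ over $\partial D$ (supplied by the uniform $\mathcal C^2$-norm of $\partial D$) together with a careful localization treating $D\setminus D_\e$ as a narrow slab and invoking the Pucci maximum principle inside $D_\e$, so that a single constant $C$, depending only on $n,\lambda,\Lambda,$ and $D$, absorbs every $x_0$-dependence and produces the stated bound $\|\widetilde u_\e - u_\e\|_{L^\infty(D_\e)}\le C\e^{q\alpha}$.
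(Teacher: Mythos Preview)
Your barrier comparison has a genuine gap. You want to trap $w=\widetilde u_\e-u_\e$ between $\pm 2\|g\|_\infty\Psi_{x_0}$ near each $x_0\in\partial D$, but $\Psi_{x_0}(x_0)=0$ while
\[
w(x_0)=\widetilde g_\e(x_0)-g\!\left(x_0,\tfrac{x_0}{\e}\right)=u_\e(z_\e)-u_\e(x_0),
\]
which is in general of order one (the boundary data $g(x,x/\e)$ oscillates, so $u_\e$ has no uniform modulus of continuity at the boundary). Thus the inequality $|w|\le C\Psi_{x_0}$ fails already \emph{at} the boundary point $x_0$, and no localization to a tubular neighborhood repairs this: on $\partial D\cap B_\rho(x_0)$ you still have $|w|\sim 1$ while $\Psi_{x_0}\sim |x-x_0|^2$. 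The ``main technical obstacle'' you identify at the end is not the real difficulty; the real difficulty is that $w$ is the wrong function to feed into the barrier.

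The missing idea, which the paper uses, is to exploit the very definition $\widetilde g_\e(x_0)=u_\e(z_\e)$. For $z_\e\in\partial D_\e$ with foot point $x_0\in\partial D$ one has
\[
w(z_\e)=\widetilde u_\e(z_\e)-u_\e(z_\e)=\widetilde u_\e(z_\e)-\widetilde u_\e(x_0),
\]
so the quantity to control on $\partial D_\e$ is the \emph{normal increment of $\widetilde u_\e$}, which \emph{does} vanish at $x_0$. Now the barrier $h(x)=r_0^{-\alpha}-|x-c_0|^{-\alpha}$ (your $\Psi_{x_0}$, up to normalization) is applied to $\widetilde u_\e$ itself: since $\mathcal M^+(D^2h)=0$, the function $\widetilde u_\e+h$ is a supersolution of \eqref{eq-gconti-dom}, and one obtains $\widetilde u_\e(z_\e)-\widetilde u_\e(x_0)\le C\,h(z_\e)=C\bigl(r_0^{-\alpha}-(r_0+\e^q)^{-\alpha}\bigr)$; the reverse inequality is analogous. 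This gives $|w|\le C\e^{q\alpha}$ on all of $\partial D_\e$, uniformly in $x_0$, and then the maximum principle for $w\in\mathcal S(\lambda/n,\Lambda)$ on the \emph{inner} domain $D_\e$ (not $D$) finishes the proof. In short: apply the barrier to $\widetilde u_\e$ to get the estimate on $\partial D_\e$, then compare on $D_\e$; do not try to compare $w$ with the barrier on $\partial D$.
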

\begin{proof}
Since $\partial D$ is in $\mathcal{C}^2$, we can find a radius $r_0>0$ and for every $x_0 \in \partial D$, there exists a exterior ball $B$ with radius $r_0$ and $\overline D \cap \overline B = \{x_0\}$. Assume that $c_0 = x_0 + r_0 \nu$ is a center of $B$. Because of the choice of $\alpha$, we have
\begin{equation}
h(x) = \displaystyle\frac{1}{r_0^\alpha} - \frac{1}{|x-c_0|^\alpha}
\end{equation}
is a super-solution of $\cM^+$ in $D$ and $h(x) \ge 0$ on $\partial D$. 

So, we can deduce that 
\begin{equation} \label{eq-gconti-dom-1}
\widetilde u_\e(z_\e) \le \widetilde u_\e(x) + \left( \displaystyle\frac{1}{r_0^\alpha} - \frac{1}{(r_0 + \e^q)^\alpha }\right) = u_\e(z_\e) + \left( \displaystyle\frac{1}{r_0^\alpha} - \frac{1}{(r_0 + \e^q)^\alpha }\right) 
\end{equation}
for all $x \in \partial D$ (or for all $z_\e \in \partial D_\e$).

From \eqref{eq-gconti-dom-1} and from the fact that $\widetilde u_\e + h(x)$ is a super-solution of the equation \eqref{eq-gconti-dom}, we have 
\begin{equation}
\widetilde u_\e(x) \le  u_\e(x) + \left( \displaystyle\frac{1}{r_0^\alpha} - \frac{1}{(r_0 + \e^q)^\alpha }\right)
\end{equation}
in $D_\e$ because of the comparison.

Similarly, we could obtain 
\begin{equation}
\widetilde u_\e(x) \ge u_\e(x) - \left( \displaystyle\frac{1}{r_0^\alpha} - \frac{1}{(r_0 + \e^q)^\alpha }\right)
\end{equation}
in $D_\e$. 
Hence the conclusion comes by combining above two inequalities.
\end{proof}

\begin{proof}[Proof of the theorem \ref{thm-overg-main}] \item
We first assume that $g^+(x) \ge \overline g(x) + \sigma$ for all irrational point $x\in \partial D$ and for a positive constant $\sigma$. If it is true, then we easily deduce the theorem by taking $\sigma \rightarrow 0$. 

For the convenience, we assume that $\|g\|_{\mathcal C^2(\square)} + \|g^+\|_{L^\infty(\partial D)} \le 1$. Chosse a $\delta$ to satisfy $(C_1+C_2) \delta \le \sigma/8$ where $C_1$ and $C_2$ are constants in Lemma \ref{overg-1} and Lemma \ref{overg-2}. 

Let $\mathcal E_\delta$ be the set of all rational direction $\nu \in S^{n-1}$ which are not contained in $\mathcal D_\delta$. Since $\mathcal E_\delta$ is finite, we can find a constant $r$, a function $h(x)$, satisfying
\begin{equation} \begin{cases}
\cM^+(D^2 h) \le 0 &\text{ in } D,\\
h(x) = 1 &\text{ on } \cup_{x \in \{\nu(x) \in \mathcal E_\delta \}}, B_{2r_0}(x),\\
h(x) =0 &\text { on } \partial D \setminus \left( \cup_{x \in \{ \nu(x) \in \mathcal E_\delta \} }, B_{4r_0}(x) \right),\\
0 \le h \le \sigma/8 \text{ in } K
\end{cases} \end{equation}
for given any compact subset $K$ of $D$ because of the assumption that \eqref{eq-main} is int the stable class for the finite values of the boundary. 

For each rational point $x_0$ in $\partial D \setminus \left( \cup_{ x \in \{ \nu(x) \in \mathcal E_\delta \} }, B_{r_0}(x) \right)$, we can find a neighborhood $B_r(x_0)$ such that there exists $\e_0$ such that if $\e \le \e_0$ then 
\begin{equation}
w(y_{\e}+\e^{q-1} \nu ; x) \le \overline g^*(x) + C_1\|g\|_{\mathcal C^2(\square) } \delta
\end{equation}
holds for all $x \in B_r(x_0)$ because of the Lemma \ref{overg-1} where $\nu = -\nu(x)$ and $y_\e = x/\e$. Note that only the radius $r$ and $\e_0$ could be affected by the choice of the point $x$ and $\delta$.  Additionally, by applying the Lemma \ref{overg-2} in the above equation, we have
\begin{equation} \label{overg-3}
w(y_{\e}+\e^{q-1} \nu ; x) \le \limsup_{x \rightarrow x_0} \overline g(x) + (C_1 + C_2) \delta
\end{equation}
for all $x \in B_r(x_0)$.

We note that the statement in the above is also true when $x_0$ is a irrational point, because the result in Lemma \ref{overg-1} holds for given any $\delta > 0$ and 
\begin{equation} \label{overg-31}
\overline g^*(x_0) = \overline g(x_0) = \limsup_{x \rightarrow x_0} \overline g(x)
\end{equation}
if $\nu(x_0)$ is irrational. The latter inequality in \eqref{overg-31} holds because of the Proposition \ref{prop-gconti}.

Now, since $\partial D \setminus \left( \cup_{x \in \{\nu(x) \in \mathcal E_\delta \}}, B_{r_0}(x) \right)$ is compact, we extract a finite covering among $\cup_{x_i} B_{r_i}(x_i)$ so we can choose $\e_0$ such that \eqref{overg-3} holds for every x  in $\partial D \setminus \left( \cup_{x \in \{ \nu(x) \in \mathcal E_\delta \}}, B_{r}(x) \right)$ if $\e \le \e_0$. 

Applying Lemma \ref{cor-approx3} to \eqref{overg-3}, we have
\begin{equation}
u_\e(x_0 + \e^{q}\nu) \le C \e^{p+q-1} +  \limsup_{x \rightarrow x_0} \overline g(x) + (C_1 + C_2) \delta
\end{equation}
where $1/2<p<q<1$ are constant satisfying the condition in Lemma \ref{cor-approx3}. 
Hence, if $\e$ is small enough, then 
\begin{equation} \label{overg-4}
u_\e(x + \e^{q}\nu) \le \limsup_{x \rightarrow x_0} \overline g(x) + \sigma/4  \le g^+(x) -3\sigma/4 \le u^+(x + \e^q\nu) - \sigma/2 
\end{equation} 
for all  $x \in \partial D \setminus \left( \cup_{x \in \{\nu(x) \in \mathcal E_\delta \}}, B_{r_0}(x) \right)$. Note that the last inequality of \eqref{overg-4} comes from the regularity of $u^+$. 

Let $\widetilde u_\e$ be same in the Lemma \ref{gconti-dom} for given $u_\e$. From \eqref{overg-4} and from the definition of $h$, we have
\begin{equation} \begin{aligned}
\widetilde u_\e(x)&\le u^+(x + \e^q\nu) + h(x) - \sigma/2  \\
&\le g^+(x) + h(x) +  \left( \displaystyle\frac{1}{r_0^\alpha} - \frac{1}{(r_0 + \e^q)^\alpha }\right) -\sigma/2 \\
&\le g^+(x) + h(x) -\sigma /4
\end{aligned} \end{equation}
for all $x \in \partial D$ if $\e$ is small. Hence, from the comparison,
\begin{equation}
\widetilde u_\e(x) \le u^+_\e(x) + h(x) -\sigma/4
\end{equation} 
holds in $D$ where  $u^+_\e$ is the solution of the equation
\begin{equation} 
\begin{cases}
F\left(D^2u^+_\e,\frac{x}{\e}  \right)= f\left( x,\frac{x}{\e} \right) &\text{ in } D \\
u^+_\e=g^+\left(\frac{x}{\e} \right)&\text{ on } \partial  D.
\end{cases}
\end{equation}
Note that it is well known that $u^+_\e$ converges to $u^+$ uniformly on $D$. See \cite{E}. 

From Lemma \ref{gconti-dom}, we have
\begin{equation}
u_\e(x) \le u^+_\e(x) + C\e^{q\alpha} + h(x) -\sigma/4 
\end{equation}
in $D_\e$. Hence, it also hold in $K$ since $K$ is contained in $D_\e$ if $\e$ is small enough. 

Now, take $\e \rightarrow 0$ on both side, we have 
\begin{equation}
u_0(x) \le u^+(x)
\end{equation}
on $K$. 
\end{proof}

%%%%%%%%%%%%%%%%%%%%%%%%%%%%%%%%%%%%%%%%%%%%%%%%%%%%%
\section{Proof of Theorem \ref{thm-main-2}} \label{sec-thm1}
%%%%%%%%%%%%%%%%%%%%%%%%%%%%%%%%%%%%%%%%%%%%%%%%%%%%%
As we discussed in Section \ref{sec-overg}, as long as the effective boundary data $\overline g$ has a continuous extension, we conclude that the solution $u_\e$ converges to $\overline u$ uniformly on every compact subset $K$ of $D$ where $\overline u$ is the unique solution of the equation \eqref{eq-limit-2} in the classical sense. In a Laplacian case, we easily prove that 
\begin{equation} \label{eq-thm1-av}
\overline g(x) = \langle g \rangle = \int_{[0,1]^n} g(y) dy
\end{equation}
so it has continuous extension. Moreover, in Section \ref{sec-cor}, we give an sufficient condition to have \eqref{eq-thm1-av}. However, it is hard to prove \eqref{eq-thm1-av} in general situation. So it is still open. In this section, we focus on the existence and uniqueness of $\overline u$ even if $\overline g$ does not have the continuous extension as long as the condition \ref{def-overg-finite} holds. 

\begin{definition} \label{def-thm1-dconti}
Suppose that $g(x)$ is a function defined on $\partial D$ except countably many points. Suppose also that $A$ is a compact subset of $\partial D$. We say $g$ is $\delta$-continuous for given $\delta$ on $A$, if for given any $x_0 \in A$, there exist a neighborhood $B_r(x_0)$ such that if $x_1,x_2 \in B_r(x_0)$ and $g(x_1)$, $g(x_2)$ are well defined, then 
\begin{equation}
| g(x_1) - g(x_2) | \le \delta. 
\end{equation}
\end{definition}
By using Lemma \ref{overg-dconti}, we easily prove that $\overline g$ is $\delta$-continuous on $\partial D \setminus \left( \cup_{ x \in \{ \nu(x) \in \mathcal E_\delta \} }, B_{r}(x) \right)$ for every $r>0$.

\begin{lemma} \label{thm1-approx-1}
Let $A$ be a compact subset of $\partial D$ and $g$ is $\delta$-continuous on $A$. Then, there are continuous functions $h^\pm$ such that $h^-(x) \le g(x) \le h^+(x)$ on $A$ and $h^+ -h^- \le C \delta$ where $C$ is a uniform constant.
\end{lemma}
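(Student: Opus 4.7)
The plan is to build $h^{\pm}$ by a standard partition of unity construction, using $\delta$-continuity to control the local oscillation of $g$ on pieces of the cover.

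First, for each $x_0\in A$, the $\delta$-continuity of $g$ supplies a radius $r(x_0)>0$ such that any two points of $B_{r(x_0)}(x_0)\cap A$ at which $g$ is defined differ by at most $\delta$. Since $A$ is compact, I extract a finite subcover $\{B_{r_i}(x_i)\}_{i=1}^N$ of $A$, shrinking the radii a bit so that the collection $\{B_{r_i}(x_i)\}$ still covers $A$ but each $B_{2r_i}(x_i)$ lies in a ball of $\delta$-oscillation. Set
\begin{equation*}
a_i=\sup\{g(x)\,:\,x\in B_{2r_i}(x_i)\cap A,\ g(x)\text{ is defined}\},\qquad b_i=\inf\{g(x)\,:\,x\in B_{2r_i}(x_i)\cap A,\ g(x)\text{ is defined}\},
\end{equation*}
so that $a_i-b_i\le \delta$ by the definition of $\delta$-continuity.

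Next, I choose a smooth partition of unity $\{\varphi_i\}_{i=1}^N$ subordinate to $\{B_{2r_i}(x_i)\}$ on an open neighborhood of $A$ in $\partial D$ with $\sum_i\varphi_i\equiv 1$ there, and define
\begin{equation*}
h^+(x)=\sum_{i=1}^N a_i\,\varphi_i(x),\qquad h^-(x)=\sum_{i=1}^N b_i\,\varphi_i(x).
\end{equation*}
These are continuous (in fact smooth) on a neighborhood of $A$ in $\partial D$, and can then be extended to continuous functions on all of $\partial D$ by the Tietze extension theorem without losing either bound. For $x\in A$ with $g(x)$ defined, if $\varphi_i(x)>0$ then $x\in B_{2r_i}(x_i)$, so $b_i\le g(x)\le a_i$; averaging against the partition of unity therefore gives $h^-(x)\le g(x)\le h^+(x)$. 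Moreover
\begin{equation*}
h^+(x)-h^-(x)=\sum_i (a_i-b_i)\,\varphi_i(x)\le \delta\sum_i\varphi_i(x)=\delta,
\end{equation*}
so $C=1$ works.

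The construction is essentially routine; the only point that requires minor care is that $g$ may be undefined on a countable set, but the definitions of $a_i,b_i$ by suprema and infima over points where $g$ is defined sidestep this issue, and $\delta$-continuity still bounds $a_i-b_i$ because the bound in Definition \ref{def-thm1-dconti} is only required between points at which $g$ exists. No genuine obstacle arises.
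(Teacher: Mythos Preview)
Your proof is correct. The paper takes a slightly different but equally elementary route: after extracting a finite cover $\{B_{r_i}(x_i)\}$ from the $\delta$-continuity balls, it sets $r=\min_i r_i$, picks a standard mollifier $\phi$ supported in $B_r(0)$, and defines $h^\pm = g*\phi \pm \delta$; the $\delta$-continuity then gives $g(x)\le h^+(x)\le g(x)+2\delta$ and similarly for $h^-$, yielding $C=4$.

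Your partition-of-unity construction is arguably cleaner in this setting. The mollification $g*\phi(x)$ tacitly requires $g$ to be defined (at least a.e.) on the $r$-neighborhood of $x$, whereas the lemma only hands you $g$ on the compact set $A$ minus a countable set; in the paper's application $A$ is the closure of an open piece of $\partial D$, so this is harmless, but your approach sidesteps the issue entirely by only sampling $g$ through the suprema and infima $a_i,b_i$ over points of $A$ where $g$ is defined. You also obtain the sharper constant $C=1$.
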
 
\begin{proof}
We will prove the case when $A$ is a subset of $\R^{n-1}$. Since $\partial D$ is regular manifold, it can be easily extend the case when $A \subset \partial D$. 

Since $g$ is $\delta$-continuous on $A$, we can find $r = r(x)$ such that if $x_1, x_2 \in B_{2r}(x_0)$, then $| g(x_1) - g(x_2)| \le \delta$. Moreover, since $A$ is compact, there is a finite covering $\cup B_{r_i} (x_i)$, $i=1,2,\cdots,m$, of $K$. Let $r= \min \{r_1,r_2, \cdots, r_m\}$. 

Let $\phi$ be a standard mollifier function whose support is contained in $B_{r}(0)$. Let $h^+(x) = g * \phi + \delta$. Since $\cup B_{r_i} (x_i)$ is a covering of $K$, we can find  a ball $B_{r_i} (x_i)$ such that $x \in B_{r_i}$ for any $x \in A$. From the choice of $r$, $B_r(x) \subset B_{2r_i} (x_i)$ and hence if $y \in B_r(0)$ then 
\begin{equation}
|g(x) - g(x-y)| \le \delta.
\end{equation}

So, we have
\begin{equation}
g(x) \le h^+(x) = \int_{B_\eta(0)} g(x-y) \phi(y) dy + \delta \le g(x) + 2 \delta
\end{equation}
if $g(x)$ is well defined at $x \in A$. In this way, we can define $h^-$ satisfying 
\begin{equation}
g(x) - 2 \delta \le h^-(x)  \le g(x) .
\end{equation}
\end{proof}

\begin{lemma} \label{thm1-approx-2}
Let $K$ be a compact subset in $D$. Also, let $\overline g(x)$ be the function in the Definition \ref{def-cor-overg}. Then, for any given $\delta>0$, there are functions $h^\pm(x) \in \mathcal C (\partial D)$, a constant $r>0$, and finite subset $\{z_1,z_2,\cdots,z_m\}$ of $\partial D$ such that
\begin{enumerate}
\item
$h^-(x) \le \overline g(x) \le h^-(x)$ and $| h^\pm(x) | \le 3 \|g\|_{L^\infty(\square)}$ on $\partial D$,
\item
$h^+(x) - h^-(x) \le 2 \delta$ for every $x \in \partial D \setminus (\bigcup_{i} B_r(z_i))$,
\end{enumerate}
Moreover, if $u^\pm(x)$ are viscosity solutions of the equation \eqref{eq-limit-2} when the boundary data are $h^\pm(x)$ respectively, then we have the following estimate,
\begin{equation}
0 \le \| u^+(x) - u^-(x) \|_{L^\infty(K)} \le C \delta
\end{equation}
where $C$ is a uniform constant. 
\end{lemma}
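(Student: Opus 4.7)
The plan is to build the envelopes $h^\pm$ by (i) approximating $\overline g$ by a continuous function on the ``generic'' part of $\partial D$ where only directions in $\mathcal{IR}\cup\mathcal D_\delta$ appear, and (ii) filling in with the constants $\pm 3\|g\|_{L^\infty(\square)}$ on small balls around the finitely many bad points. The difference $u^+-u^-$ then satisfies a Pucci comparison whose ``localized'' part is killed on compact subsets by the stable class hypothesis of Definition \ref{def-overg-finite}.

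\medskip

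\textit{Construction of $h^\pm$.} Recall that the rational directions outside $\mathcal D_\delta$ form a \emph{finite} set $\mathcal E_\delta$. By compactness of $\partial D$ the set $\{x\in\partial D:\nu(x)\in\mathcal E_\delta\}$ can be covered by finitely many balls $B_{r/2}(z_1),\dots,B_{r/2}(z_m)$ with $\nu(z_i)\in\mathcal E_\delta$, where $r>0$ is a free parameter to be fixed only at the end. Set $A_r:=\partial D\setminus\bigcup_i B_r(z_i)$. Lemma \ref{overg-dconti} shows that $\overline g$ is $\delta$-continuous on $A_r$ in the sense of Definition \ref{def-thm1-dconti}, so Lemma \ref{thm1-approx-1} yields continuous $H^\pm:A_r\to\R$ with $H^-\le\overline g\le H^+$, $H^+-H^-\le 2\delta$, and $|H^\pm|\le 2\|g\|_{L^\infty(\square)}$. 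On the disjoint closed set $\bigcup_i\overline{B_{r/2}(z_i)}$ I set $H^+\equiv 3\|g\|_{L^\infty(\square)}$ and $H^-\equiv -3\|g\|_{L^\infty(\square)}$; a Tietze extension across the annular gaps produces continuous $h^\pm:\partial D\to\R$ with $|h^\pm|\le 3\|g\|_{L^\infty(\square)}$. Since $|\overline g|\le\|g\|_{L^\infty(\square)}$ wherever defined, the sandwich $h^-\le\overline g\le h^+$ holds on all of $\partial D$, so items (1)--(2) are satisfied.

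\medskip

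\textit{The $L^\infty$ bound on $u^+-u^-$.} Let $v=u^+-u^-$. By Theorem \ref{thm-vis-diff}, $v\in\mathcal S(\lambda/n,\Lambda)$; since $h^+\ge h^-$ on $\partial D$, the maximum principle gives $v\ge 0$ in $D$. Decompose the nonnegative boundary difference as $h^+-h^-=\phi_g+\phi_b$, where $0\le\phi_g\le 2\delta$ on all of $\partial D$ (the globally small piece) and $\phi_b\ge 0$ is supported in $\bigcup_i B_r(z_i)$ and bounded by $6\|g\|_{L^\infty(\square)}$. Let $W_g,W_b$ solve the Dirichlet problem for the Pucci operator $\mathcal M^+$ with boundary data $\phi_g$ and $\phi_b$ respectively. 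Subadditivity $\mathcal M^+(A+B)\le\mathcal M^+(A)+\mathcal M^+(B)$ makes $W_g+W_b$ a supersolution of $\mathcal M^+=0$ whose boundary values equal those of $v$, so comparison with the subsolution $v$ yields $v\le W_g+W_b$ in $D$. The maximum principle gives $0\le W_g\le 2\delta$, while the stable class hypothesis lets me pick the radius $r$ from the first step small enough that $\|W_b\|_{L^\infty(K)}\le\delta$. Combining, $0\le v\le 3\delta$ on $K$, which is the claimed estimate with $C=3$.

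\medskip

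\textit{Main obstacle.} The delicate point is matching the Pucci classes: Theorem \ref{thm-vis-diff} places $u^+-u^-$ in $\mathcal S(\lambda/n,\Lambda)$, whereas Definition \ref{def-overg-finite} is phrased for the Pucci operators with the original constants $\lambda,\Lambda$. To apply the last step rigorously one must either strengthen the sufficient condition of Lemma \ref{overg-finite} to $(n-1)\lambda>n\Lambda$, or reinterpret the stable class assumption as a property of the enlarged Pucci class actually needed here. All other ingredients---the $\delta$-continuity of $\overline g$ on $A_r$, the Tietze extension, and the subadditivity-based Pucci comparison---are routine bookkeeping once the bad points $z_i$ have been isolated.
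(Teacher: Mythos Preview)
Your construction of the boundary envelopes $h^\pm$ matches the paper's, but your route to the $L^\infty(K)$ estimate passes through Theorem~\ref{thm-vis-diff}, and the constant mismatch you flag at the end is a genuine gap under the hypotheses as stated: $u^+-u^-$ lies only in $\mathcal S(\lambda/n,\Lambda)$, while the stable-class assumption of Definition~\ref{def-overg-finite} is formulated for $\cM^\pm_{\lambda,\Lambda}$, and neither inclusion goes the right way to close the comparison $v\le W_g+W_b$.

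The paper avoids this entirely by never putting $u^+-u^-$ into a Pucci class. Instead it extends the continuous envelopes $\tilde h^\pm$ (your $H^\pm$, after Tietze extension to all of $\partial D$, with $\tilde h^+-\tilde h^-\le C\delta$ everywhere) into the interior as \emph{solutions} of $\overline F(D^2\tilde h^\pm)=\overline f$, and then sets $h^\pm:=\tilde h^\pm\pm v$ in $D$, where $v$ solves $\cM^+_{\lambda,\Lambda}(D^2 v)=0$ with bump boundary data supported near the bad points $z_i$. Because $\overline F$ inherits the original ellipticity constants, one has $\overline F(M+N)\le\overline F(M)+\cM^+_{\lambda,\Lambda}(N)$, so $h^+$ is a supersolution and $h^-$ a subsolution of the equation $\overline F(D^2\cdot)=\overline f$ itself. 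Comparing $u^+$ with $h^+$ and $u^-$ with $h^-$ \emph{separately} for this equation yields the sandwich $h^-\le u^-\le u^+\le h^+$ throughout $D$, whence on $K$ one gets $u^+-u^-\le(\tilde h^+-\tilde h^-)+2v\le C\delta$: the first piece by the maximum principle for two solutions with $C\delta$-close boundary data, the second by the stable-class hypothesis invoked with exactly the constants $\lambda,\Lambda$. This is precisely what keeps the sufficient condition of Lemma~\ref{overg-finite} at $(n-1)\lambda>\Lambda$ rather than the $(n-1)\lambda>n\Lambda$ your approach would force.
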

\begin{proof}
Without any loss of generality, we may assume that $\| g \|_{\mathcal C^2(\square)} =1$. As we discussed before, there are finite points $z_1, z_2, \cdots, z_m$ such that $\nu(z_i)$ are not in $\mathcal D_\delta$. Let $b(x)$ be a positive function having value 1 on $\partial D \cap \big(\cup_i B_r(z_i)\big)$ and supported in $\partial D \cap \big(\cup_i B_{2r}(z_i)\big)$. Let $v$ be the solution of the following equation 
\begin{equation} \begin{cases}
\cM^+(D^2 v) = 0 &\text{ in } D,\\
v(x) =b(x) &\text{ on } \partial D. 
\end{cases} \end{equation}

According to out assumption, the solution of the equation $v$ satisfies
\begin{equation}
0 \le v \le \delta
\end{equation}
on $K$ if we choose $r>0$ small enough.

Since $\overline g$ is $\delta$-continuous on $\partial D \setminus \big(\cup_i B_r(z_i)\big)$, there are continuous functions $\widetilde h^\pm(x)$ that has the same property in the Lemma \ref{thm1-approx-1}. By extending properly, we may assume that $\widetilde h^\pm \in \mathcal C^0(\partial D)$ and the difference between $\widetilde h^+$ and $\widetilde h^-$ is less than $C \delta$.

Extend $\widetilde h^\pm(x)$ to the interior of $D$ by using the equation $\overline F(D^2 \widetilde h) = \overline f(x)$. Let $h^\pm = \widetilde h^\pm(x) \pm v$. We easily check that $h^\pm$ satisfies the property (1) and (2) in the statement. Since $h^+$ is a super solution of the equation \eqref{eq-limit-2}, we have 
\begin{equation}
h^-(x) \le u^-(x) \le u^+(x) \le h^+(x) \text{ in } D. 
\end{equation}
from the comparison. So, we have 
\begin{equation} \begin{aligned}
\le \| u^+(x) - u^-(x) \|_{L^\infty(K)}  &\le \| h^+(x) - h^-(x) \|_{L^\infty(K)}   \\
&\le  \| \widetilde h^+(x) - \widetilde h^-(x) \|_{L^\infty(K)} +  2\| v\|_{L^\infty(K)} \\
&\le C \delta + 2 \delta \\
&\le C \delta.
\end{aligned} \end{equation}
\end{proof}

\begin{proof}[Proof of Theorem \ref{thm-main-2}] \item
Let $h^\pm_i(x)$ and $u^\pm_i(x)$ be the same in the Lemma \ref{thm1-approx-2} when $\delta = 1/i$, $i \in \mathbb N$. We let
\begin{equation} \begin{aligned}
H^+_i(x) &= \min \{ h^+_1, h^+_2, \cdots, h^+_i \},\\
H^-_i(x) &= \max \{ h^-_1, h^-_2, \cdots, h^-_i \},
\end{aligned} \end{equation}
and $\widetilde u^\pm_i(x)$ be the solutions of the equation \eqref{eq-limit-2} equipped the boundary data $H^\pm_i(x)$. 

Since $\widetilde u^+_i(x)$ is monotone, there is a function $\overline u^+$ which is defined in $K$. Similarly, we can define $\overline u^-$. From the fact that $u^-_i(x) \le \widetilde u^-_i(x) \le \widetilde u^+_i(x) \le u^+_i(x)$ and Lemma \ref{thm1-approx-2}, we have
\begin{equation} \label{eq-thm1-uniq}
0 \le u^+_i(x) - u^-_i(x) \le C /i
\end{equation}
for some uniform constant $C$. Hence we have $\overline u^-(x) = \overline u^+(x)$ on $K$.

Let $\overline u(x,K)$ be the function defined as $\overline u(x,K) = \overline u^+(x)$ on $K$. We easily check that if $K_1 \subset K_2$, then $\overline u(x,K_1) = \overline u(x,K_2)$ on $K_1$ because the barrier function when $K=K_2$ also be a barrier when $K=K_1$. So, $\overline u(x)$ is well defined by defining $\overline u(x) = \overline u(x,K)$ where $K$ is any compact set containing $x$. 

We claim that $\overline u$ is a solution of the equation \eqref{eq-limit-2} in the general sense. Suppose that $v$ is a solution of the equation \eqref{eq-limit-2} with $v \ge \overline g$ on $\partial D$. Then, since $\widetilde u^-_i(x) \le \overline g(x) \le v(x)$ on $\partial D$ for all $i$, we have $\widetilde u^+_i(x) \le v(x)$ in $D$. Hence we have
\begin{equation}
\overline u = \overline u^-\le v
\end{equation}
for all compact subset $K$ of $D$. Hence the above holds for all $x \in D$. 
Since the other inequality comes from the similar argument, $\overline u$ is the solution of the equation \eqref{eq-limit-2} in the general sense. Moreover, the uniqueness of the solution of the equation \eqref{eq-limit-2} immediately comes from the estimate \eqref{eq-thm1-uniq}

Now we are done if we show $u_0 = \overline u$. From the Theorem \ref{thm-overg-main}, we have
\begin{equation}
u^+_i(x) \le u_0(x) \le u^-_i(x)
\end{equation}
in some $K$. So, $\overline u(x) = u_0(x)$ on $K$. 
\end{proof}

As we told before, the IDDC is essential because if the domain does not satisfy the IDDC, we cannot expect the uniform limit. The following is a typical example for that.
\begin{example}
Suppose that $D =\{ (x_1,x_2) \in \R^2 : x_1^2 + (x_2-1)^2 <1, x_2 > 1\}$ and $g$ is given same in the Example \ref{ex-cor-overg}.  We denote $\partial_1 D = \partial D \cap \{x_2=1\}$ and $\partial_2 D = \partial D \setminus \partial_1 D$. As we told before, if we choose $\e = 1/2m$ for some integer $m$, then $g(x/\e) =1$ on $\partial_1 D$. By using the similar argument in the Lemma \ref{cor-approx2}, we can find a small $\eta>0$ which is independent on $\e$ such that $u_\e \ge 2/3$ in $B_\eta((0,1))$ where $u_\e$ is the solution of the equation \eqref{eq-main} when $D$ and $g$ are given by the same in the above. Similarly, if we choose $\e = 1/(2m+1)$, the $u_\e \le 1/3$ in $B_\eta((0,1))$. So, $u_\e$ cannot converges to some function locally uniformly on every compact subset of $D$. 
\end{example}

\end{document}